\numberwithin{equation}{section}
\newtheorem{theorem}{Theorem}[section]
\newtheorem{proposition}[theorem]{Proposition}
\newtheorem{definition}[theorem]{Definition}
\newtheorem{corollary}[theorem]{Corollary}
\newtheorem{lemma}[theorem]{Lemma}
\theoremstyle{remark}
\newtheorem{remark}[theorem]{Remark}
\newcommand{\C}{\mathbb{C}}
\newcommand{\D}{\mathbb{D}}
\newcommand{\E}{\mathbb{E}}
\newcommand{\dd}{\mathrm{d}}
\newcommand{\FC}{\mathcal{C}}
\newcommand{\Str}{\mathbb{S}}
\begin{document}
\title[Algebraic and totally symmetric harmonic maps  ]{\bf{Algebraic Harmonic Maps, Totally Symmetric Harmonic Maps and a Conjecture}}
\author{Josef F. Dorfmeister, Peng Wang}
%\date{}
\maketitle

\begin{abstract}
In this paper, we discuss the associated family  of harmonic maps $\mathcal{F}: M \rightarrow G/K$   from a Riemann surface $M$ into  inner symmetric spaces of compact or non-compact type which are either algebraic or totally symmetric.
These notions are the two components of the definition of a harmonic map of finite uniton number, as stated by \cite{BuGu}.
We finish this paper by comparing these to notions and by stating a conjecture.
\end{abstract}

{\bf Keywords:}  harmonic maps of finite uniton type; non-compact inner symmetric spaces; normalized potential; Willmore surfaces.\\

MSC(2010): 58E20; 53C43;  53A30; 53C35

%\tableofcontents

\section{\bf{Introduction}}

 Harmonic maps from Riemann surfaces into symmetric spaces arise naturally in geometry and mathematical physics and hence became important objects in several mathematical fields, including  the study of  minimal surfaces, CMC surfaces, Willmore surfaces and related integrable systems. For harmonic maps into compact symmetric spaces or compact Lie groups, one of the most foundational and important papers is the  {description} of all harmonic two spheres into $U(n)$ by Uhlenbeck \cite{Uh}. It was shown that harmonic two-spheres satisfy a very restrictive condition. Uhlenbeck coined the expression  ``finite uniton number'' for this property. Since the uniton number is an integer, one also obtains this way a subdivision of harmonic maps into  $U(n)$.

 Uhlenbeck's work was generalized in a very elegant way to harmonic two spheres in all compact semi-simple Lie groups  and into all compact inner symmetric spaces by Burstall and Guest in \cite{BuGu}. Using  Morse theory for loop groups in the spirit of Segal's work, they showed that  harmonic maps of finite uniton number in compact Lie groups can be related to meromorphic maps into  (finite dimensional)  nilpotent  Lie algebras. They also provided a concrete method to find all such nilpotent Lie algebras. Finally, via the Cartan embedding, harmonic maps into compact inner symmetric spaces are considered as harmonic maps into Lie groups satisfying some algebraic ``twisting'' conditions. Therefore the theory of Burstall and Guest provides a description of  harmonic maps of finite uniton number into compact Lie groups and compact inner symmetric spaces, in a way which is not only theoretically satisfying, but can also be implemented well for concrete computations \cite{BuGu,Gu2002}.

 Let us take a somewhat closer look at the paper \cite{BuGu}. The title and the introduction of loc.cit. only refer to harmonic maps $\mathcal F: S^2 \rightarrow \Str$, where $\Str$ is a compact  inner symmetric space. A large part of the body of the paper, however, deals with harmonic maps  $\mathcal F: M \rightarrow \Str$, where $\Str$ is as above and $M$ is an arbitrary, connected, compact or non-compact, Riemann surface satisfying the following conditions:
 \begin{itemize}
 \item There exists an extended solution $\Phi(z, \bar z, \lambda): M \rightarrow \Str$, $\lambda \in S^1$,
 such that
\[  \hbox{$\Phi$ has a finite uniton number $k \geq 0$.}\]
 \end{itemize}
 Such harmonic maps are said to be of finite uniton number $k$. See page 546 of \cite{BuGu} for more details.

  In the present paper we use the DPW method to investigate harmonic maps into non-compact  inner symmetric spaces. Therefore we consider extended frames, not extended solutions.  These frames are defined on the universal cover $\tilde{M}$ of $M$.  One can  include, with some caveat, the case $M = S^2$,  see Section 3 of \cite{DoWa12} and
  $(2)$ of Remark \ref{sphere}. The two conditions above defining finite uniton  number harmonic maps  translate into the following two properties of  extended frames (see
 Proposition \ref{typeequivnumber} and Proposition \ref{prop-fut}):
  \begin{theorem} \label{Theorem1.1}
  {Let  $\mathcal F: M\rightarrow \Str$ be a harmonic map from a connected Riemann surface $M$ into  a compact, inner symmetric space.
 Then $\mathcal F$ has a finite uniton number $($$k\geq 0$ for some integer $k$$)$ if and only if}
 \begin{enumerate}[(U1)]
 \item  { There exists an extended frame $F : \tilde{M} \rightarrow \Lambda G_\sigma$
 for $\mathcal F$ which has  trivial monodromy relative to the action of $\pi_1(M)$ on $\tilde{M}$.}
\item  {There exists some frame  $F : \tilde{M} \rightarrow \Lambda G_\sigma$ for $\mathcal F$,  whose Fourier expansion relative to $\lambda$ is a Laurent polynomial.}
 \end{enumerate}
Here $ \Lambda G_\sigma$  denotes the twisted loop group associated to $\Str$.
  \end{theorem}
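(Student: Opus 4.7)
The plan is to derive the characterization directly from Proposition \ref{typeequivnumber} and Proposition \ref{prop-fut}, by translating the Burstall--Guest definition of finite uniton number (framed in terms of extended solutions on $M$) into the DPW language of extended frames on $\tilde M$.

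First, I would set up the dictionary between extended solutions and extended frames. An extended solution is by definition a map $\Phi: M \times S^1 \to G$ on $M$ itself, whereas an extended frame $F: \tilde M \to \Lambda G_\sigma$ lives naturally on the universal cover; the two are related by a $\lambda$-normalization and the Cartan embedding. From this dictionary, an extended frame descends to $M$ if and only if it has trivial monodromy under the deck action of $\pi_1(M)$, which is precisely the content I would invoke from Proposition \ref{typeequivnumber} to obtain (U1). Similarly, the finite uniton condition on $\Phi$ — that it be a Laurent polynomial in $\lambda$ of degree $k$ — translates term-by-term into the corresponding Fourier polynomial condition for an extended frame, which is exactly Proposition \ref{prop-fut}, yielding (U2).

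For the forward direction, finite uniton number of $\mathcal F$ gives an extended solution $\Phi$ on $M$ that is polynomial in $\lambda$; pulling back to $\tilde M$ and applying the dictionary produces a frame simultaneously enjoying (U1) and (U2). For the converse, (U1) gives an extended frame $F_1$ with trivial monodromy, and (U2) gives an a priori different frame $F_2$ that is a Laurent polynomial in $\lambda$. I would then argue that any two extended frames for the same harmonic map differ by a controlled gauge (constant in $z, \bar z$ and compatible with the twisting), so these two properties can be merged into a single frame $F$; applying the dictionary in reverse produces an extended solution on $M$ which is polynomial in $\lambda$, i.e., has finite uniton number.

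The main obstacle will be this merging step in the converse: showing that one can gauge the frame provided by (U1) to one which is simultaneously a Laurent polynomial in $\lambda$, without destroying the triviality of monodromy. This requires a careful analysis of the gauge freedom of extended frames — essentially the residual $K$-action and the interaction with the twisting automorphism $\sigma$ — and is precisely the technical content that Propositions \ref{typeequivnumber} and \ref{prop-fut} are designed to supply; once in hand, the equivalence follows by combining the two propositions.
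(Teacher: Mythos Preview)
Your overall strategy is right and matches the paper: Theorem \ref{Theorem1.1} is indeed just the conjunction of the two separate equivalences established around Proposition \ref{typeequivnumber} (together with Proposition \ref{algebraic-harmonic <--->ext. sol.} and the corollary to Proposition \ref{monodromyPhi}), with Proposition \ref{prop-fut} supplying the equivalent formulations of the Laurent-polynomial condition.

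However, your description of the gauge freedom is incorrect, and this leads you to overstate the difficulty of the ``merging step''. Two extended frames for the same harmonic map do \emph{not} differ by something constant in $z,\bar z$; they differ by right multiplication by a $K$-valued function $k(z,\bar z)$ which is $\lambda$-\emph{independent} but genuinely depends on $z,\bar z$ (see Lemma \ref{frametransform} and the discussion after Definition \ref{def-1}). The point is that $\lambda$-independence is all you need: multiplying a Laurent polynomial in $\lambda$ by $k(z,\bar z)$ on the right keeps it a Laurent polynomial, and since the monodromy acts on the left, right $K$-gauge does not affect triviality of the monodromy either. Hence, as the paper notes right after the statement of Theorem \ref{Theorem1.1} and in Remark 3.2(3), property (U1) for \emph{some} extended frame is equivalent to (U1) for \emph{all} extended frames, and likewise for (U2). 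So your frames $F_1$ and $F_2$ already coincide up to an irrelevant $K$-gauge, and there is nothing to merge: any extended frame you pick automatically satisfies both (U1) and (U2). The ``main obstacle'' you identify is therefore not an obstacle at all, and no delicate analysis of the residual $K$-action is required.
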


   Note that property $(U1)$ for some extended frame is equivalent to property $(U1)$ for all extended frames.  Harmonic maps $\mathcal F$, as well as the corresponding extended frames, are said  to be of {\bf finite uniton type} if the properties $(U1)$ and $(U2)$ are satisfied.
   We will apply the notion of "finite uniton type" analogously, if $\Str$ is a non-compact symmetric space (see Definition \ref{def-uni}). A discussion of the properties $(U1)$ and $(U2)$  will be given in Section 3.   Both properties together (i.e. the case of finite uniton type harmonic maps) will be investigated in Section 4. Moreover, in
    Proposition \ref{typeequivnumber} we will show that for a compact Riemann surface $M$ harmonic maps of finite uniton type are  in a bijective relation with  finite uniton number harmonic maps  in the sense of Uhlenbeck \cite{Uh} (see also \cite{BuGu}).

{ While in the literature  primarily harmonic maps $\mathcal F: M\rightarrow G/K$ were considered, where $G/K$ is a compact symmetric space,  in the theory of Willmore surfaces in $S^n$, and many other surface classes, one has to deal with ``Gauss type maps" which are harmonic maps into non-compact symmetric spaces. Therefore we will also consider the case that $G/K$ is a non-compact symmetric space. Moreover, it is in fact the  goal of the paper \cite{DoWa-fu2} to generalize results of \cite{BuGu} to harmonic maps of finite uniton type into a non-compact inner symmetric space and this paper is the first part of this project.}

This paper is organized as follows. In Section 2 we review the basic results of the loop group theory for harmonic maps.
In Section 3, we provide a detailed description of harmonic maps of finite uniton type.
Several equivalent definitions  are given for such maps.
Moreover, we also discuss briefly the monodromy and dressing actions for harmonic maps of finite uniton type.  In Section 4, we first collect the relations between the extended frames and extended solutions associated to a harmonic map into inner symmetric spaces. Then we show that for harmonic maps
 into inner symmetric spaces, the notion ``of finite uniton type" is equivalent to the notion of ``of finite uniton number".  We then end the paper with a conjecture concerning a result of Segal by Section 5.

\section{\bf{Review of basic loop group theory}}

 For any  inner involution $\sigma$ of a semi-simple  {real} Lie group $G$ the center of $G$ is contained in the connected component of the fixed point set of $\sigma$.
To begin with, we first recall some  notation. Let $G$ be a connected real semi-simple Lie group, compact or non-compact, represented as a matrix Lie group. Let $G/K$ be an inner symmetric space, compact or non-compact, with the involution
$\sigma: G\rightarrow G$ such
that $G^{\sigma}\supset K\supset(G^{\sigma})^0$, where ``0" denotes ``identity component".
{\em  For the purposes of this paper the actual choice of $K$ will be of little importance. The reader may thus simply assume that $K = \hat{K} = G^\sigma$ holds.}
 In particular, we can assume without loss of generality that $G$ has trivial center.
 We will keep this assumption throughout this paper, except where we state the opposite.
Note that  {(the tangent bundle of) } $G/K$ carries a left-invariant non-degenerate symmetric bilinear form.
Let $\mathfrak{g}$ and $\mathfrak{k}$ denote the
Lie algebras of $G$ and $K$ respectively. The involution $\sigma$ induces
a decomposition of $\mathfrak{g}$ into eigenspaces, the (generalized) Cartan decomposition
\[\mathfrak{g}=\mathfrak{k}\oplus\mathfrak{p},\hspace{5mm} \hbox{ with }\  [\mathfrak{k},\mathfrak{k}]\subset\mathfrak{k},
~~~ [\mathfrak{k},\mathfrak{p}]\subset\mathfrak{p}, ~~~
[\mathfrak{p},\mathfrak{p}]\subset\mathfrak{k}.\]
Let $\pi:G\rightarrow G/K$ denote the projection of $G$ onto $G/K$.

Now let  $\mathfrak{g^{\mathbb{C}}}$ be the complexification of $\mathfrak{g}$ and $G^{\mathbb{C}}$  the connected complex (matrix) Lie group with Lie algebra $\mathfrak{g^{\mathbb{C}}}$.
Let $\tau$ denote the complex anti-holomorphic involution
$g \rightarrow \bar{g}$, of $G^{\mathbb{C}}$. Then $G=Fix^{\tau}(G^{\C})^0$.
The inner involution $\sigma: G \rightarrow G $ commutes with the complex conjugation $\tau$ and extends to
the complexified Lie group $G^\C$, $\sigma: G^{\mathbb{C}}\rightarrow G^{\mathbb{C}}$. Let $K^{\mathbb{C}}\subset \hbox{Fix}^{\sigma}(G^{\mathbb{C}})$ denote the smallest complex subgroup of $G^{\C}$ containing $K$. Then the Lie algebra of $K^{\mathbb{C}}$ is
$\mathfrak{k^{\mathbb{C}}}$.

Occasionally we will also use another complex anti-linear involution, $\theta$, which commutes with $\sigma$ and $\tau$ and has as fixed point set
$ Fix^{\theta}(G^{\C})$,   a  maximal compact subgroup of
$G^{\C}$. For more details on the basic setting we refer to \cite{DoWa13}.

\begin{remark}
In this paper we only consider inner symmetric spaces. However, several of our results
also hold for arbitrary symmetric spaces. To keep the presentation of the paper
as simple as possible we will not consider  { the case of outer symmetric spaces in any detail in this paper. }
\end{remark}

%MMMMMMMMMMMMMMMMMMMMMMMMMMMM

\subsection{\bf{Harmonic Maps into Symmetric Spaces}}

Let $G/K$ be  {an inner symmetric} space as above and let $\mathcal{F}:M\rightarrow G/K$ be a harmonic map  {where $M$ is a connected Riemann surface. Without specification, $M$ may be compact or non-compact.}
  {\bf In this paper we will always assume  that $\mathcal{F}$ is ``full"}   {in the following sense. We will mention the assumption "full" only where it seems to be particularly important.}

 {
\begin{definition} \label{deffull}
A harmonic map $\mathcal{F}:M\rightarrow G/K$ is called ``full"  if only $g = e$ fixes every element
of $\mathcal{F}(M)$.
That is,  if there exists $g\in G$ such that $g \mathcal{F}(p)=\mathcal{F}(p)$ for all
$p\in M$, then $g=e$.
\end{definition}}

 {Let $\tilde{\pi} : \tilde{M} \rightarrow M$ be the universal cover of $M$ and $z_0 \in \tilde{M}$ satisfying $\tilde{\pi}(z_0) = p_0$.
Then $\mathcal{F}$ has a natural lift $\tilde{\mathcal{F}}: \tilde{M} \rightarrow G/K$ satisfying
$\tilde{\mathcal{F}} = \mathcal{F} \circ \tilde{\pi}$ and obviously  $\tilde{\mathcal{F}}(z_0,\bar z_0) = eK$. Moreover, there exists a frame $F: \tilde{M} \rightarrow G$ such that $\tilde{\mathcal{F}}=\pi \circ F$ and $F(z_0,\bar z_0) = e.$}

Let $\alpha$ denote the Maurer-Cartan form of $F$. Then $\alpha$ satisfies the Maurer-Cartan equation
and we have
\begin{equation*}F^{-1}\mathrm{d} F= \alpha, \hspace{5mm} \mathrm{d} \alpha+\frac{1}{2}[\alpha\wedge\alpha]=0.
\end{equation*}
Decomposing $\alpha$ with respect to $\mathfrak{g}=\mathfrak{k}\oplus\mathfrak{p}$ we obtain
\[\alpha=\alpha_{ \mathfrak{k}  } +\alpha_{ \mathfrak{p} }, \
\alpha_{\mathfrak{k  }}\in \Gamma(\mathfrak{k}\otimes T^*M),\
\alpha_{ \mathfrak{p }}\in \Gamma(\mathfrak{p}\otimes T^*M).\] Moreover, considering the complexification $TM^{\mathbb{C}}=T'M\oplus T''M$, we decompose $\alpha_{\mathfrak{p}}$ further into the $(1,0)-$part $\alpha_{\mathfrak{p}}'$ and the $(0,1)-$part $\alpha_{\mathfrak{p}}''$. Set  \begin{equation} \label{alphalambda}
\alpha_{\lambda}=\lambda^{-1}\alpha_{\mathfrak{p}}'+\alpha_{\mathfrak{k}}+\lambda\alpha_{\mathfrak{p}}'', \hspace{5mm}  \lambda\in S^1.
\end{equation}

\begin{lemma}  $($Pohlmeyer Lemma, see \cite{DPW}$)$  The map  $\mathcal{F}:M\rightarrow G/K$ is harmonic if and only if
\begin{equation}\label{integr}\mathrm{d}
\alpha_{\lambda}+\frac{1}{2}[\alpha_{\lambda}\wedge\alpha_{\lambda}]=0,\ \ \hbox{for all}\ \lambda \in S^1.
\end{equation}
\end{lemma}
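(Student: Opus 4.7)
\medskip

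\noindent\textbf{Proof proposal.} The plan is to expand the $\lambda$-family curvature $d\alpha_{\lambda}+\frac12[\alpha_{\lambda}\wedge\alpha_{\lambda}]$ as a Laurent polynomial in $\lambda$ and to examine each coefficient separately. Since such a Laurent polynomial vanishes on the entire circle $S^1$ if and only if each of its coefficients vanishes, equation \eqref{integr} will be equivalent to a system of a few differential equations on $M$ which we will then identify individually.

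First I would substitute $\alpha_{\lambda}=\lambda^{-1}\alpha_{\mathfrak p}'+\alpha_{\mathfrak k}+\lambda\alpha_{\mathfrak p}''$ and use the Lie-bracket relations $[\mathfrak k,\mathfrak k]\subset\mathfrak k$, $[\mathfrak k,\mathfrak p]\subset\mathfrak p$, $[\mathfrak p,\mathfrak p]\subset\mathfrak k$ together with the crucial fact that $M$ is a one-dimensional complex manifold: all 2-forms of pure type $(2,0)$ or $(0,2)$ vanish identically, so in particular $[\alpha_{\mathfrak p}'\wedge\alpha_{\mathfrak p}']=0=[\alpha_{\mathfrak p}''\wedge\alpha_{\mathfrak p}'']$. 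After this simplification the expansion has only three nonzero coefficients, in $\lambda^{-1},\lambda^{0},\lambda^{1}$, which read
\begin{equation*}
\lambda^{-1}\colon\ d\alpha_{\mathfrak p}'+[\alpha_{\mathfrak k}\wedge\alpha_{\mathfrak p}']=0,\quad
\lambda^{0}\colon\ d\alpha_{\mathfrak k}+\tfrac12[\alpha_{\mathfrak k}\wedge\alpha_{\mathfrak k}]+[\alpha_{\mathfrak p}'\wedge\alpha_{\mathfrak p}'']=0,\quad
\lambda^{1}\colon\ d\alpha_{\mathfrak p}''+[\alpha_{\mathfrak k}\wedge\alpha_{\mathfrak p}'']=0.
\end{equation*}

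Next I would compare this system with the plain Maurer-Cartan equation $d\alpha+\frac12[\alpha\wedge\alpha]=0$ for $\alpha=F^{-1}dF$, which is automatic. Its $\mathfrak k$-component reads $d\alpha_{\mathfrak k}+\frac12[\alpha_{\mathfrak k}\wedge\alpha_{\mathfrak k}]+\frac12[\alpha_{\mathfrak p}\wedge\alpha_{\mathfrak p}]=0$, and since $\frac12[\alpha_{\mathfrak p}\wedge\alpha_{\mathfrak p}]=[\alpha_{\mathfrak p}'\wedge\alpha_{\mathfrak p}'']$, this is exactly the $\lambda^{0}$-coefficient; so the $\lambda^{0}$ equation is always true and conveys nothing new. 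The $\mathfrak p$-component of Maurer-Cartan reads $d\alpha_{\mathfrak p}+[\alpha_{\mathfrak k}\wedge\alpha_{\mathfrak p}]=0$ and is the sum of the $\lambda^{-1}$ and $\lambda^{1}$ coefficients above. Hence the additional content of \eqref{integr} beyond the automatic Maurer-Cartan identity is that the $\mathfrak p$-component splits, separately, into its $(1,0)$ and $(0,1)$ pieces.

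Finally I would identify this splitting with harmonicity of $\mathcal F$. Because $M$ is a Riemann surface, pure $(2,0)$-parts vanish, so $d\alpha_{\mathfrak p}'=\bar\partial\alpha_{\mathfrak p}'$ and $[\alpha_{\mathfrak k}\wedge\alpha_{\mathfrak p}']=[\alpha_{\mathfrak k}''\wedge\alpha_{\mathfrak p}']$; thus the $\lambda^{-1}$ equation is $\bar\partial\alpha_{\mathfrak p}'+[\alpha_{\mathfrak k}''\wedge\alpha_{\mathfrak p}']=0$, which is the intrinsic form of the harmonic map equation for $\mathcal F\colon M\to G/K$ (and the $\lambda^{1}$ equation is its complex conjugate). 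Conversely, harmonicity combined with the $\mathfrak p$-component of Maurer-Cartan recovers both $\lambda^{\pm1}$-vanishings. This closes the equivalence.

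The only thing that could cause trouble is bookkeeping, namely keeping the sign and factor conventions for $[\,\cdot\wedge\cdot\,]$ on Lie-algebra-valued 1-forms straight and correctly exploiting the vanishing of pure-type 2-forms; neither of these is a real obstacle. The decisive structural input is the one-dimensional complex structure of $M$, which is what decouples the Laurent expansion and makes the separate $\lambda^{\pm1}$ equations meaningful.
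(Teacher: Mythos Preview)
Your argument is correct and is in fact the standard proof of the Pohlmeyer Lemma. Note, however, that the paper does not supply its own proof of this statement: it merely quotes the result with the reference to \cite{DPW}, so there is nothing in the paper to compare against beyond the citation. Your expansion of $d\alpha_\lambda+\tfrac12[\alpha_\lambda\wedge\alpha_\lambda]$ into the $\lambda^{-1},\lambda^{0},\lambda^{1}$ coefficients, the identification of the $\lambda^{0}$ coefficient with the $\mathfrak{k}$-part of the ordinary Maurer--Cartan equation, and the identification of the $\lambda^{\pm1}$ coefficients with the harmonic map equation (and its conjugate) is exactly the computation one finds in \cite{DPW} and in the surrounding literature.
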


\begin{definition}\label{def-1} Let $\mathcal{F}:M\rightarrow G/K$ be harmonic
and  {let $F: \tilde{M} \rightarrow G$ be a frame satisfying $\mathcal{F}=\pi \circ F$
and $F(z_0,\bar z_0) = e$, stated as above.}
Define $\alpha_{\lambda}$ as   {in \eqref{alphalambda} and consider
on $\tilde{M}$  a solution  $F(z,\bar z, \lambda), \lambda \in \C^*$,  to the equation }
\begin{equation}\label{eq-F-int}
 {\mathrm{d} F(z,\bar z, \lambda)= F(z,\bar z, \lambda)\alpha_{\lambda}, \hspace{2mm} F(z_0,\bar z_0,\lambda) = e.}
\end{equation}

 {As a consequence,  the choice of initial condition stated determines $F(*,*,\lambda)$ uniquely by $F$. Moreover,
we then also obtain  $F(z,\bar z, \lambda)|_{\lambda = 1} = F(z,\bar z)$
for all $z \in \tilde M$.
Any such solution will be called an  {\em extended frame} for the harmonic map $\mathcal{F}$.}

 {Finally we would like to point out that  $\mathcal F_{\lambda}:=F(z,\bar z, \lambda)\mod K$  gives a family of harmonic maps with $\mathcal F_{\lambda}|_{\lambda=1}=\mathcal F$. It will be called the {\bf associated family of $\mathcal F$}.}
 \end{definition}

\begin{remark}
 In this paper, we will usually assume for a harmonic map the conventions introduced above.
In particular, we will assume (without further saying) the existence of a base point $z_0$ and an extended frame such that  {$F(z_0,\bar z_0, \lambda) = e$} holds.
%However, in Theorem 4.14 below we will encounter a case, where, a priori,  the initial condition may be necessarily much more involved.  {Fortunately}, in Section 4.5 it will be shown that ``it  suffices to consider the cases where the initial condition in Theorem \ref{thm-finite-uniton2} is the identity matrix".
\end{remark}

%MMMMMMMMMMMMMMMMMMMMMMMMMMMMMMMMMMMMMMMMMMMMMMMMMMMMMMMMMMMMMMMMMMMMMMMM
\subsection{\bf{Loop Groups and Decomposition Theorems}}

 For the construction of harmonic maps we will always employ the loop group method.  In this context we consider the twisted loop groups of $G$ and $G^{\mathbb{C}}$
and some of their frequently occurring subgroups:
\begin{equation*}
\begin{array}{llll}
\Lambda G^{\mathbb{C}}_{\sigma} ~&=\{\gamma:S^1\rightarrow G^{\mathbb{C}}~|~ ,\
\sigma \gamma(\lambda)=\gamma(-\lambda),\lambda\in S^1  \},\\[1mm]
\Lambda G_{\sigma} ~&=\{\gamma\in \Lambda G^{\mathbb{C}}_{\sigma}
|~ \gamma(\lambda)\in G, \hbox{for all}\ \lambda\in S^1 \},\\[1mm]
\Omega G_{\sigma} ~&=\{\gamma\in \Lambda G_{\sigma}|~ \gamma(1)=e \},\\[1mm]
\Lambda^{-} G^{\mathbb{C}}_{\sigma}  ~&=
\{\gamma\in \Lambda G^{\mathbb{C}}_{\sigma}~
|~ \gamma \hbox{ extends holomorphically to }  {D_\infty \}},\\[1mm]
\Lambda_{*}^{-} G^{\mathbb{C}}_{\sigma} ~&=\{\gamma\in \Lambda G^{\mathbb{C}}_{\sigma}~
|~ \gamma \hbox{ extends holomorphically to }D_{\infty},\  \gamma(\infty)=e \},\\[1mm]
\Lambda^{+} G^{\mathbb{C}}_{\sigma} ~&=\{\gamma\in \Lambda G^{\mathbb{C}}_{\sigma}~
|~ \gamma \hbox{ extends holomorphically to }D_{0} \},\\[1mm]
\Lambda_{S}^{+} G^{\mathbb{C}}_{\sigma} ~&=\{\gamma\in
\Lambda^+ G^{\mathbb{C}}_{\sigma}~|~   \gamma(0)\in S \},\\[1mm]
\end{array}\end{equation*}
where $D_0=\{z\in \mathbb{C}| \ |z|<1\}$,  { $D_\infty=\{z\in \mathbb{C}| \ |z|>1\} \cup\{\infty\}$ }and $S$ is some subgroup of $K^\C$.
\vspace{2mm}

If the group $S$ is chosen to be $S = (K^\C)^0$, then we  {write $\Lambda_{\mathcal{C}}^{+} G^{\mathbb{C}}_{\sigma} $.}
If the group $S$ is chosen to be $S = \{e\}$, then we write  {$\Lambda_{\star}^{+} G^{\mathbb{C}}_{\sigma} $.}

  {In some cases subgroups $S$, different from the above, are chosen, like in cases where there exists a Borel subgroup. In such cases one can derive a unique decomposition of any loop group element. In other cases, like in \cite{DoWa12}, one can only derive unique decompositions
for elements in some open subset of the given loop group:
\begin{remark}
If $G = SO^+(1,n+3)$ and $K = SO^+(1,3)\times SO(n)$, then there exists a closed, connected solvable subgroup $S \subseteq (K^\C)^0$ such that
the multiplication $\Lambda G_{\sigma}^0 \times \Lambda^{+}_S G^{\mathbb{C}}_{\sigma}\rightarrow
{(\Lambda G^{\mathbb{C}}_{\sigma})^0}$ is a real analytic diffeomorphism onto the open subset
$ \Lambda G_{\sigma}^0 \cdot \Lambda^{+}_S G^{\mathbb{C}}_{\sigma}     \subset(\Lambda G^{\mathbb{C}}_{\sigma})^0$.
\end{remark}
}

We frequently use the following decomposition theorems
(see \cite{Ke1}, \cite{DPW}, \cite{PS}, \cite{DoWa12}).

\begin{theorem} \label{thm-decomposition}\
\begin{enumerate}
\item {\em (Iwasawa decomposition)}
\begin{enumerate}
\item
$ (\Lambda G^{\C})_{\sigma} ^0=
\bigcup_{\delta \in \Xi }( \Lambda G)_{\sigma}^0\cdot \delta\cdot
\Lambda^{+}_\mathcal{C} G^{\mathbb{C}}_{\sigma},$
where $\Xi $ denotes {a (discrete) }set of representatives for the  double-coset decomposition;
\item The multiplication $\Lambda G_{\sigma}^0 \times \Lambda_\mathcal{C}^{+} G^{\mathbb{C}}_{\sigma}\rightarrow
(\Lambda G^{\mathbb{C}}_{\sigma})^0$ is a real analytic map onto the connected open subset
$ \Lambda G_{\sigma}^0 \cdot \Lambda_\mathcal{C}^{+} G^{\mathbb{C}}_{\sigma}   = {\mathcal{I}_e} \subset \Lambda G^{\mathbb{C}}_{\sigma}$.

 {Here $\mathcal{I}_e$ denotes the (connected) open Iwasawa cell containing the identity element.}
\end{enumerate}
\item  {\em (Birkhoff decomposition)}

\begin{enumerate}
\item
 {$(\Lambda {G}^\C )^0= \bigcup _{\omega \in \mathcal{W}} \Lambda^{-}_{\mathcal{C}} {G}^{\mathbb{C}}_{\sigma} \cdot \omega \cdot \Lambda^{+}_{\mathcal{C}} {G}^{\mathbb{C}}_{\sigma}$
where  $\mathcal{W}$ denotes a (discrete) set of representatives for the double coset
decomposition}

\item The multiplication $\Lambda_{*}^{-} {G}^{\mathbb{C}}_{\sigma}\times
\Lambda^{+}_\FC {G}^{\mathbb{C}}_{\sigma}\rightarrow
\Lambda {G}^{\mathbb{C}}_{\sigma}$ is an analytic  diffeomorphism onto the
open and dense subset $\Lambda_{*}^{-} {G}^{\mathbb{C}}_{\sigma}\cdot
\Lambda^{+}_\FC {G}^{\mathbb{C}}_{\sigma}$ {\em ( big Birkhoff cell )}.
\end{enumerate}
\end{enumerate}
\end{theorem}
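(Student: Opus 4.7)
The plan is to derive Theorem \ref{thm-decomposition} by combining the classical decomposition theorems for untwisted loop groups (as developed in \cite{PS,Ke1}) with the additional symmetry imposed by the involution $\sigma$. The argument splits naturally into two parts: the Birkhoff decomposition, which is essentially algebraic and rests on Riemann--Hilbert factorization, and the Iwasawa decomposition, which is analytic in nature and behaves quite differently for compact and non-compact $G$.

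For Part (2), I would first invoke the classical fact from \cite{PS} that every $\gamma \in \Lambda G^{\mathbb{C}}$ lies in a double coset of the form $\Lambda^-_* G^{\mathbb{C}} \cdot \omega \cdot \Lambda^+ G^{\mathbb{C}}$, and that for $\omega = e$ the resulting multiplication map is an analytic diffeomorphism onto an open dense subset. The twisting by $\sigma$ is compatible with this structure: the subgroups $\Lambda^\pm G^{\mathbb{C}}$ are $\sigma$-invariant and the representatives $\omega$ may be chosen $\sigma$-equivariant, so the untwisted factorization restricts to a twisted one indexed by $\mathcal{W}$. Uniqueness on the big cell follows immediately from $\Lambda^-_* G^{\mathbb{C}}_\sigma \cap \Lambda^+_{\FC} G^{\mathbb{C}}_\sigma = \{e\}$, which is verified by comparing Fourier expansions of a common element at $\lambda = 0$ and $\lambda = \infty$. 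Openness and density transfer from the untwisted setting.

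For Part (1), the strategy depends on whether $G$ is compact or not. In the compact case one has a global Iwasawa decomposition $\Lambda G^{\mathbb{C}}_\sigma = \Lambda G_\sigma \cdot \Lambda^+_{\FC} G^{\mathbb{C}}_\sigma$, proved in \cite{PS} via a gradient flow/energy argument on the homogeneous space $\Lambda G^{\mathbb{C}}_\sigma / \Lambda^+_{\FC} G^{\mathbb{C}}_\sigma$; the multiplication map is a real analytic diffeomorphism once one observes that $\Lambda G_\sigma \cap \Lambda^+_{\FC} G^{\mathbb{C}}_\sigma$ is absorbed into the $\FC$-condition at $\lambda = 0$. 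For $G$ non-compact, following \cite{Ke1,DoWa12}, I would first verify the Lie algebra level decomposition $\Lambda \mathfrak{g}^{\mathbb{C}}_\sigma = \Lambda \mathfrak{g}_\sigma \oplus \Lambda^+_{\FC} \mathfrak{g}^{\mathbb{C}}_\sigma$, then apply the inverse function theorem at $e$ to obtain a local diffeomorphism, and propagate it by left/right translation to get an open connected cell $\mathcal{I}_e$. The remaining double cosets, indexed by the discrete set $\Xi$, exhaust $(\Lambda G^{\mathbb{C}}_\sigma)^0$ via a standard orbit argument.

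The main obstacle is precisely the Iwasawa statement in the non-compact setting: global surjectivity genuinely fails, and one must carefully describe the index set $\Xi$, show that $\mathcal{I}_e$ is open and connected, and verify that the multiplication map is a real analytic diffeomorphism rather than merely a local one. The subtlety lies in ruling out fixed points of the action that would obstruct injectivity, and in proving that the image is exactly one connected component. I would follow \cite{Ke1} for the general framework and \cite{DoWa12} for the concrete realization adapted to the real forms relevant here.
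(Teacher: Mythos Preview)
The paper does not give its own proof of this theorem: it is stated as a known result with citations to \cite{Ke1}, \cite{DPW}, \cite{PS}, and \cite{DoWa12}, and the subsequent Remark~\ref{S2} merely comments on the structure of the middle terms. Your sketch is consistent with the approaches in those references---Pressley--Segal for the Birkhoff decomposition and the compact Iwasawa case, Kellersch for the non-compact Iwasawa double-coset structure---so there is nothing to compare against and no gap to flag; you have correctly identified that the non-compact Iwasawa statement is the delicate point and that it is handled by the cited literature rather than reproved here.
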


\begin{remark} \ \label{S2}
\begin{enumerate}
\item  {We would  like to recall that for inner symmetric spaces  the twisted loop algebras
are isomorphic to the untwisted ones.
For the algebraic case see, e.g. \cite{Kac}, chapter 8, and our case follows by completion in the topology used in  this paper.}
\item  {The middle terms of the Birkhoff decomposition, item $(2)(a)$ above, form, in the untwisted case, the Weyl group
of the corresponding untwisted loop algebra (see e.g. \cite{PS}) and therefore form a discrete subset of the loop group.}
\item  {The middle terms in item $(1)(a)$ above can be determined quite precisely by using \cite{Ke1}.
Roughly speaking, they correspond to factors in a natural product decomposition of the Weyl group elements of the Birkhoff decomposition and thus they form a discrete subset of the loop group as well. For the present paper we will not need any special information about these factors.}
\item  {It is well known that in the Birkhoff decomposition only one of the double cosets is an open subset of the loop group under consideration. Therefore the name ``open cell" seems to be appropriate.
In the case of the Iwasawa decomposition, in general, several open double cosets can occur
(see, e.g., \cite{Ke1} for an explicit example and also \cite{D:open cells}).
But the open cell $\mathcal{I}_e$ containing the identity element plays naturally a special role.
Therefore it gets a name.}
\end{enumerate}
\end{remark}

Loops which have a finite Fourier expansion are called {\it algebraic loops} and
 denoted by the subscript $``alg"$, like
$\Lambda_{alg} G_{\sigma},\ \Lambda_{alg} G^{\mathbb{C}}_{\sigma},\
\Omega_{alg} G_{\sigma} $ as in  \cite{BuGu}, \cite{Gu2002}. And we define
  \begin{equation}\label{eq-alg-loop}\Omega^k_{alg} G_{\sigma}:
  =\left\{\gamma\in
\Omega_{alg} G_{\sigma}|
Ad(\gamma)=\sum_{|j|\leq k}\lambda^jT_j \right\}\subset \Omega_{alg} G_{\sigma} .\end{equation}

%MMMMMMMMMMMMMMMMMMMMMMMMMMMMMMMMM

\subsection{ \bf{The DPW Method and its Potentials}}

 With the loop group decompositions as stated above, we obtain a
construction scheme of harmonic maps from a surface into $G/K$.

\begin{theorem}\label{thm-DPW} \cite{DPW}, \cite{DoWa12}, \cite{Wu}.
Let $\D$ be a contractible open subset of $\C$ and $z_0 \in \D$ a base point.
Let $\mathcal{F}: \D \rightarrow G/K$ be a harmonic map with $\mathcal{F}(z_0)=eK.$
The associated family $\mathcal{F}_{\lambda}$ (See Definition \ref{def-1}) of $\mathcal F$  can be lifted to a map
$F:\D \rightarrow \Lambda G_{\sigma}$, the extended frame of $\mathcal{F}$, and we can assume  without loss of generality that $F(z_0,\bar z_0, \lambda)= e$ holds.
Under this assumption,
\begin{enumerate}
\item
 The map $F$ takes only values in
 {$\mathcal{I}_e\subset \Lambda G^{\mathbb{C}}_{\sigma}$, i.e. in the open Iwasawa cell containing the identity element.}

 \item There exists a discrete subset $\D_0\subset \D$ such that on $\D\setminus \D_0$
we have the decomposition
\[F(z,\bar{z},\lambda)=F_-(z,\lambda)  F_+(z,\bar{z},\lambda),\]
where \[F_-(z,\lambda)\in\Lambda_{*}^{-} G^{\mathbb{C}}_{\sigma}
\hspace{2mm} \mbox{and} \hspace{2mm} F_+(z,\bar{z},\lambda)\in \Lambda^{+}_{\FC} G^{\mathbb{C}}_{\sigma}.\]
and $F_-(z,\lambda)$ is meromorphic in $z \in \D$  and satisfies
$F_-(z_0,\lambda) = e$.

Moreover,
\[\eta= F_-(z,\lambda)^{-1} \mathrm{d} F_-(z,\lambda)\]
 {is a $\lambda^{-1}\cdot\mathfrak{p}^{\mathbb{C}} \textendash \hbox{valued}$ meromorphic $(1,0) \textendash$form} with poles at points of $\D_0$ only.

\item Spelling out the converse procedure in detail we obtain:
Let $\eta$ be a  { $\lambda^{-1}\cdot\mathfrak{p}^{\mathbb{C}} \textendash \hbox{valued}$ meromorphic $(1,0) \textendash$form}  such that the solution
to the ODE
\begin{equation}
F_-(z,\lambda)^{-1} \mathrm{d} F_-(z,\lambda)=\eta, \hspace{5mm} F_-(z_0,\lambda)=e,
\end{equation}
is meromorphic on $\D$, with  $\D_0$ as set of possible poles.
 Then on  $\D_{\mathcal{I}}
= \{  z \in \D\setminus {\D_0}\ |\  {F_-(z,\lambda) \in \mathcal{I}_e}\}$
we
define  $\tilde{F}(z,\lambda)$ by the Iwasawa decomposition
\begin {equation}\label{Iwa}
F_-(z,\lambda)=\tilde{F}(z,\bar{z},\lambda)  \tilde{F}_+(z,\bar{z},\lambda)^{-1}.
\end{equation}
 This way one obtains  an extended frame
\[\tilde{F}(z,\bar{z},\lambda)=F_-(z,\lambda)  \tilde{F}_+(z,\bar{z},\lambda)\]
of some harmonic map from $  \D_{\mathcal{I}}  $ to $G/K$  satisfying
$\tilde{F}(z_0,\bar{z}_0,\lambda)= e$.

\item Any harmonic map  $\mathcal{F}: \D\rightarrow G/K$ can be derived from a
 {$\lambda^{-1}\cdot\mathfrak{p}^{\mathbb{C}} \textendash \hbox{valued}$ meromorphic
$(1,0) \textendash$form} $\eta$ on $\D$.
Moreover, the two constructions outlined above  are inverse to each other (on appropriate domains of definition).
\end{enumerate}
\end{theorem}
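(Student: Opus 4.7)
The plan is to establish the four statements in sequence, with the real work concentrated in item (2); items (1), (3), (4) then follow by inspection of the relevant decompositions together with the uniqueness of Birkhoff and Iwasawa splittings.

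First I would dispatch (1): an extended frame by Definition~\ref{def-1} takes values in $\Lambda G_\sigma$, and since $e\in\Lambda^{+}_{\mathcal{C}}G^{\C}_\sigma$, we have $\Lambda G_\sigma=\Lambda G_\sigma\cdot\{e\}\subset\Lambda G_\sigma\cdot\Lambda^{+}_{\mathcal{C}}G^{\C}_\sigma=\mathcal{I}_e$. For (2) I would apply Theorem~\ref{thm-decomposition}(2)(b) pointwise on $\D$ to $F(z,\bar z,\cdot)$, defining $\D_0$ as the complement of the preimage of the big Birkhoff cell. Since $F(z_0,\bar z_0,\cdot)=e$ lies in that cell and $F$ is real-analytic, $\D_0$ is a proper real-analytic subset; I will upgrade it to a discrete set once holomorphy of $F_-$ is established.

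The central computation is to show that $F_-$ is in fact independent of $\bar z$ and that $F_-^{-1}\,\mathrm{d}F_-$ has the claimed form. Differentiating $F=F_-F_+$ and solving for the Maurer--Cartan form of $F_-$ yields
\begin{equation*}
F_-^{-1}\,\mathrm{d}F_- \;=\; F_+\,\alpha_\lambda\,F_+^{-1} \;-\; (\mathrm{d}F_+)\,F_+^{-1}.
\end{equation*}
Splitting into $(1,0)$ and $(0,1)$ pieces and using that $\alpha_\lambda^{(0,1)}=\alpha_{\mathfrak{k}}^{(0,1)}+\lambda\,\alpha''_{\mathfrak{p}}$ contains only non-negative powers of $\lambda$, while $F_+$ takes values in $\Lambda^{+}_{\mathcal{C}}G^{\C}_\sigma$, the entire $(0,1)$ right-hand side lies in the non-negative part of the loop algebra; but the left-hand side lies in the strictly negative part, so both vanish and $F_-$ is $z$-holomorphic. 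On the $(1,0)$ side the only negative-$\lambda$ contribution to $F_+\,\alpha_\lambda^{(1,0)}\,F_+^{-1}$ comes from the $\lambda^{-1}$ term of $\alpha_\lambda$, and using the twisting condition $F_+(z,\bar z,0)\in K^{\C}$ this contribution is $\lambda^{-1}\mathrm{Ad}(F_+(z,\bar z,0))\alpha'_{\mathfrak{p}}$, a $(1,0)$-form with values in $\lambda^{-1}\cdot\mathfrak{p}^{\C}$. This is the required $\eta$. Combined with $z$-holomorphy and the real-analytic dependence of the Birkhoff factors on parameters, $\eta$ is meromorphic in $z$, with poles only at the isolated points where $F_-$ fails to lie in $\Lambda^{-}_{*}G^{\C}_\sigma$; these form the promised discrete $\D_0$.

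For (3), given $\eta$ I would solve the ODE to obtain a meromorphic $F_-$, Iwasawa-split on $\D_{\mathcal{I}}$ as in~\eqref{Iwa}, and verify by direct calculation of $\tilde F^{-1}\,\mathrm{d}\tilde F$ from $\tilde F=F_-\tilde F_+$ and $F_-^{-1}\,\mathrm{d}F_-=\eta$ that the resulting Maurer--Cartan form has the extended-frame shape $\lambda^{-1}\beta'_{\mathfrak{p}}+\beta_{\mathfrak{k}}+\lambda\,\beta''_{\mathfrak{p}}$; integrability is automatic since $F_-$ satisfies an ODE, and harmonicity of $\pi\circ\tilde F|_{\lambda=1}$ follows from the Pohlmeyer Lemma. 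For (4), given a harmonic $\mathcal F$, apply (2) to produce $\eta$ and then (3) to produce $\tilde F$; the uniqueness of Birkhoff and Iwasawa factorizations on the respective open cells, together with $\tilde F(z_0,\bar z_0,\lambda)=F(z_0,\bar z_0,\lambda)=e$ and the uniqueness of solutions to the defining ODE, forces $\tilde F=F$. The main obstacle I expect is the discreteness of $\D_0$: this requires combining the real-analyticity of $F$ (so that the singular set is real-analytic in $\D$) with the $z$-holomorphy of $F_-$ (so that the singular set is cut out by a holomorphic condition on a one-complex-dimensional domain, hence discrete unless it is all of $\D$, which is excluded by $z_0\notin\D_0$).
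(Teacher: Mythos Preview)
The paper does not supply a proof of this theorem: it is stated as background with citations to \cite{DPW}, \cite{DoWa12}, \cite{Wu}, and is followed immediately by Remark~\ref{sphere}. Your sketch reproduces the standard argument from those references (Birkhoff splitting of the extended frame, comparison of positive and negative $\lambda$-parts of the Maurer--Cartan form to force $\partial_{\bar z}F_-=0$ and to identify $\eta$, then the converse via Iwasawa splitting), and is essentially correct.

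One point to tighten: the passage from ``$F_-$ is holomorphic on $\D\setminus\D_0$'' to ``$F_-$ extends meromorphically across $\D_0$, hence $\D_0$ is discrete'' is not automatic from real-analyticity of $F$ alone. The argument in \cite{DPW} (and, more streamlined, in \cite{Wu}) shows that the big Birkhoff cell is the complement of a zero set of a holomorphic determinant-type function, so that on a one-complex-dimensional domain the failure set is either all of $\D$ (excluded by $z_0\notin\D_0$) or discrete, and that $F_-$ acquires only poles there. Your final paragraph gestures at this, but the actual mechanism is the holomorphic (not merely real-analytic) nature of the Birkhoff stratification pulled back along the holomorphic map $z\mapsto F_-(z,\cdot)$, which you have only after establishing $\partial_{\bar z}F_-=0$ locally near $z_0$ and then continuing.
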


\begin{remark}\ \label{sphere}
\begin{enumerate}
\item   {A typical application of the theorem above arises as follows: one considers a harmonic map $\mathcal{F}: M \rightarrow G/K$, where $M$ is any Riemann surface and $G/K$ any inner semi-simple symmetric space
and considers the natural lift $\tilde{\mathcal{F}}: \tilde{M} \rightarrow G/K$. Then, if $M$ is compact of
positive genus or non-compact, then $\tilde{M}$ is contractible and one can apply the theorem above to  $\tilde{\mathcal{F}}$.}

\item  {So the question is: what happens if $M = S^2$? This case has been discussed in detail in Section 3.2 of \cite{DoWa12}. Basically, the theorem above still holds, if one admits  some
singular points. More precisely, if $\mathcal{F}: S^2 \rightarrow G/K$ is harmonic, then  {(see e.g. loc.cit. Theorem 3.11)} after removing at most two  {(different, but otherwise arbitrary)} points $\{p_1,p_2\}$ from $S^2$ one can find an extended frame
$F^\prime : S^2 \setminus{ \{p_1,p_2\}} \rightarrow  \Lambda G_\sigma$ for
$\mathcal{F}^\prime  = \mathcal{F}| S^2 \setminus{ \{p_1,p_2\}}  : S^2 \setminus{ \{p_1,p_2\}}  \rightarrow G/K$. Moreover, $F^\prime_-$, formed by Birkhoff decomposing $F^\prime$, extends  meromorphically  to $S^2$ and the normalized potential formed with $F^\prime_-$ extends meromorphically to $S^2.$
The converse{, the construction of a harmonic map defined on $S^2$ from a normalized potential,} can be carried out  {as usual} if one admits at most two singularities  {in the extended frame associated with the original normalized potential. For more details we refer to loc.cit.}
Of course, if one wants to obtain a harmonic map defined  {and smooth} on all of $S^2$, then additional conditions at the  {poles of the original normalized potential} need to be imposed.
We will use this result at several places below. }
\item
   The restriction above to factorizations   on  $\D_{\mathcal{I}}$ implies that on this set we have globally an Iwasawa decomposition of the form $(2.6)$ with $F_{\pm}$  globally smooth. This implies, of course, the smoothness of the associated harmonic map. At isolated points of $\D_{\mathcal{I}}$ in $\D$ the frames generally exhibit singular behaviour.
 In some cases, however, the corresponding harmonic maps are nevertheless non-singular.
 When considering, e.g.,  Willmore surfaces, singularities in the frame may or may not induce singularities in the associated harmonic map \cite{DoWa12}, \cite{Wang-3}. For example, singularities can occur in the extended frame, while both the associated harmonic conformal Gauss map as well as the corresponding Willmore surface stays smooth at the singularities \cite{Wang-3}.  Therefore, when discussing concrete examples of global immersions one needs to determine separately for all singularities of the frame, whether the final surface has a singularity, i.e. a branch point, or whether it is smooth and an immersion there. We refer to \cite{Wang-3} for examples of Willmore surfaces {with or without} branch points.
\end{enumerate}

\end{remark}

\begin{definition}\cite{DPW},\ \cite{Wu}.
 {With the conventions as above, let  $\mathcal{F}: M \rightarrow G/K$ be a
harmonic map with basepoint $p_0$, $\tilde{\pi}: \tilde{M} \rightarrow M$ the universal cover of $M$
and  $\tilde{\mathcal{F}}: \tilde{M} \rightarrow G/K$
the natural lift of $\mathcal{F}$ with basepoint $z_0,$ where $\tilde{\pi}(z_0) = p_0$.
Then the
$\lambda^{-1}\cdot \mathfrak{p}^{\mathbb{C}} \textendash \hbox{valued}$
 meromorphic $(1,0)\textendash$form  $\eta$  defined in $(2)$ of the last theorem for
 $\tilde{\mathcal{F}}$ is called the {\em normalized potential} for the harmonic
map $\mathcal{F}$ with the point $z_0$ as the reference point. And $F_-(z,\lambda)$, satisfying
 $F_-(z_0,\lambda) = e,$ given above is called the  {corresponding} meromorphic extended frame.}
\end{definition}

The normalized potential is uniquely determined,  {since the extended frames are normalized to $e$ at some fixed base point on $\tilde{M}$.}
The normalized potential is  meromorphic  {in $z \in \tilde{M}$.}

In many applications it is much more convenient to use potentials which have a Fourier expansion containing more than one power of $\lambda$.
And when permitting many (maybe infinitely many) powers of $\lambda$,  one can
 {obtain holomorphic coefficients:}

\begin{theorem}\cite{DPW}, \cite{DoWa12}.\label{thm-CC}
Let $\D$ be a contractible open subset of $\C$.
\begin{enumerate}
    \item Let $F(z,\bar{z},\lambda)$ be an extended  frame of some harmonic map from  $\D$  to $G/K$. Then there exists  {some real-analytic $V_+: \D  \rightarrow  \Lambda^{+} G^{\mathbb{C}}_{\sigma} $ such that $C(z,\lambda) =
F(z, \bar z, \lambda)  V_+ (z, \bar z,\lambda) $} is holomorphic in $z\in\mathbb{D}$ and in $\lambda \in \mathbb{C}^*$, with $C(z_0,\lambda) =e$ and $V_+(z_0, \bar{z}_0, \lambda) = e$. The Maurer-Cartan form $\eta = C^{-1} \mathrm{d} C$ of $C$ is a holomorphic
$(1,0) \textendash$form on $\D$ and  $\lambda \eta$ is holomorphic for $\lambda \in \C$.

\item
Conversely, Let $\eta\in\Lambda\mathfrak{g}^{\C}_{\sigma}$ be a holomorphic
$(1,0)\textendash$form such that $\lambda \eta$ is holomorphic  in $\lambda$ for $\lambda \in \C$, then by the same process  {as} given in Theorem \ref{thm-DPW} we obtain a harmonic map $\mathcal{F}: \D \rightarrow G/K$.
\end{enumerate}

\end{theorem}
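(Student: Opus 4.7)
The plan is to prove the two parts in turn: part (1) constructs $V_+$ by solving a linear $\bar\partial$-system, and part (2) is reduced to Theorem \ref{thm-DPW}(3) via Birkhoff decomposition.

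For part (1), the condition $\bar\partial(F V_+) = 0$ is equivalent to $\bar\partial V_+ = -\beta V_+$, where $\beta$ denotes the $(0,1)$-component of the Maurer-Cartan form $\alpha_\lambda = F^{-1}\, dF$; by the twisting $\beta$ takes values in $\mathfrak{k}^{\C} + \lambda\, \mathfrak{p}^{\C} \subset \Lambda^+ \mathfrak{g}^{\C}_\sigma$. I would solve this first-order linear system on the contractible domain $\D$ with initial condition $V_+(z,\bar z_0,\lambda) = e$ for every $z$; the $(0,2)$-integrability is trivial on a Riemann surface. Because $\beta$ has only non-negative Fourier modes in $\lambda$, the resulting $V_+$ stays in $\Lambda^+ G^{\C}_\sigma$, and in particular $V_+(z_0,\bar z_0,\lambda) = e$ and hence $C(z_0,\lambda) = e$. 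A direct gauge computation then yields $\eta := C^{-1} dC = V_+^{-1} (\alpha_\lambda)' V_+ + V_+^{-1} \partial V_+$, which is a $(1,0)$-form since its $(0,1)$-components cancel by the defining equation for $V_+$. Since $(\alpha_\lambda)' = \lambda^{-1} \alpha_{\mathfrak{p}}'$ plus terms without negative Fourier modes, multiplying $\eta$ by $\lambda$ lands in $\Lambda^+ \mathfrak{g}^{\C}_\sigma$, so $\lambda \eta$ extends holomorphically to $\lambda \in \C$.

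For part (2), I would first integrate $C^{-1} dC = \eta$ with $C(z_0,\lambda) = e$; the integrability $d\eta + \tfrac12 [\eta \wedge \eta] = 0$ holds automatically since $\eta$ is a holomorphic $(1,0)$-form on a Riemann surface, so $C$ exists globally on $\D$ and is holomorphic in $z \in \D$ and in $\lambda \in \C^*$. Next I would Birkhoff-decompose pointwise $C(z,\lambda) = C_-(z,\lambda) C_+(z,\lambda)$ via Theorem \ref{thm-decomposition}(2), yielding factors meromorphic in $z$ off a discrete subset $\D_0 \subset \D$. The identity $C_-^{-1} dC_- = C_+ \eta C_+^{-1} - (dC_+) C_+^{-1}$ forces $C_-^{-1} dC_-$ to equal the $\Lambda^-_*$-component of $C_+ \eta C_+^{-1}$; since the only negative Fourier mode of $\eta$ is $\lambda^{-1} \eta_{-1}\, dz$ with $\eta_{-1} \in \mathfrak{p}^{\C}$, and $C_+(z,0) \in K^{\C}$ by twisting, that component equals $\lambda^{-1}\, C_+(z,0)\, \eta_{-1}\, C_+(z,0)^{-1}\, dz$, a $\lambda^{-1}\cdot \mathfrak{p}^{\C}$-valued meromorphic $(1,0)$-form. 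Thus $C_-$ is a normalized meromorphic frame in the sense of Theorem \ref{thm-DPW}(3), which produces a harmonic map $\mathcal{F}: \D \to G/K$. Because $C$ and $C_-$ differ only by right-multiplication by $C_+ \in \Lambda^+ G^{\C}_\sigma$, Iwasawa-decomposing $C$ itself recovers the same extended frame, so $\mathcal{F}$ is obtained from $\eta$ by the DPW process.

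The main obstacle I anticipate is securing the $\lambda$-uniform global solvability of the $\bar\partial$-system for $V_+$ in part (1): the solution must belong to $\Lambda^+ G^{\C}_\sigma$ as a genuine loop, not merely as a formal power series in $\lambda$, and must depend real-analytically on $(z,\bar z)$. Contractibility of $\D$ together with the $\Lambda^+$-valuedness of $\beta$ provide the right framework, but uniform convergence of the Fourier expansion of the solution and its holomorphic extension through $\lambda = 0$ is the step requiring the most care. The remaining bookkeeping with loop-group decompositions follows the standard DPW scheme already recalled in Theorem \ref{thm-DPW}.
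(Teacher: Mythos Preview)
The paper does not prove this theorem; it merely cites \cite{DPW} and \cite{DoWa12}. Your approach to part (1) is exactly the one used in \cite[Lemma 4.11]{DPW}: solve $\bar\partial V_+ = -\beta V_+$ with $\beta$ the $(0,1)$-part of $\alpha_\lambda$, which lies in $\Lambda^+\mathfrak{g}^{\C}_\sigma$, and then read off the properties of $\eta$ by a gauge computation. One clarification: your initial condition ``$V_+(z,\bar z_0,\lambda)=e$ for every $z$'' only makes literal sense after complexifying $(z,\bar z)$ to independent variables $(z,w)$, which is legitimate because $F$ is real-analytic; this is precisely the mechanism \cite{DPW} employs, and it simultaneously disposes of the convergence issue you flag in your last paragraph, since the resulting ODE in $w$ is a genuine Banach-space ODE in $\Lambda^+ G^{\C}_\sigma$.

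For part (2) your argument is correct but slightly more elaborate than the intended ``same process as in Theorem~\ref{thm-DPW}'': the direct route is to integrate $\eta$ to $C$ and then Iwasawa-decompose $C=\tilde F\,\tilde F_+^{-1}$ immediately, checking that $\tilde F^{-1}d\tilde F$ has the required $\lambda^{\pm1}$-structure by reality plus the single negative Fourier mode of $\eta$. Your intermediate Birkhoff step $C=C_-C_+$ is not wrong---it recovers the normalized potential and you correctly observe that the Iwasawa factor of $C$ equals that of $C_-$---but it adds a layer that the statement does not call for.
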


 {\begin{definition}\label{rm-C}
 The matrix function $C(z,\lambda)$ associated with the holomorphic $(1,0) \textendash$form $\eta$ as in Theorem \ref{thm-CC} will be called a {\em holomorphic extended frame} for the harmonic map $\mathcal{F}$.
 \end{definition}}

%MMMMMMMMMMMMMMMMMMMMMMMMMMMM
\subsection{\bf{Symmetries and Monodromy}}

 {It is natural to investigate harmonic maps with symmetries.}
Since harmonic maps frequently occur as ``Gauss maps" of some surfaces,
 {the
investigation  of harmonic maps with symmetries also has  implications for surface theory.
}

\begin{definition}
 {Let  $\mathcal{F}: M \rightarrow G/K$ be a harmonic map. Then a pair, $(\gamma,R)$, is called a symmetry of $\mathcal{F}$, if
$\gamma$ is an automorphism of $M$ and R is an automorphism of $G/K$ satisfying
 $$\mathcal{F}(\gamma.p) = R.\mathcal{F}(p)$$
  for all $p \in M$.}
  \end{definition}

   {We would like to point out that an intuitive notion of "symmetry" for $\mathcal{F}$ would be an automorphism $R$ of $G/K$ such that  $ R \mathcal{F}(M) = \mathcal{F}(M)$. In some cases one can prove that this intuitive definition implies the actual definition given just above.}

  \begin{lemma} \label{frametransform}
 {  Let  $\mathcal{F}: M \rightarrow G/K$ be a harmonic map {and}  $(\gamma,R)$ a symmetry
  of $\mathcal{F}.$
Let $\tilde{M}$ denote the universal cover of $M$
and $\tilde{\mathcal{F}}: \tilde{M} \rightarrow G/K, \tilde{\mathcal{F}} = \mathcal{F} \circ \pi$, its natural lift. Then:
 \begin{enumerate}
 \item  $\tilde{\mathcal{F}}$ satisfies
\[\tilde{\mathcal{F}} (\gamma.z) = R.\tilde{\mathcal{F}}(z).\]
\item
 For any frame $F : \tilde{M} \rightarrow G$  of $\mathcal{F}$ one  obtains
\begin{equation} \label{symmetry-nolambda}
 {\gamma^*F(z,\bar z) = RF(z,\bar z)k(z,\bar z),}
\end{equation}
where $k(z,\bar z)$ is a function from $\tilde{M}$ into $K$.
\item  For the extended frame  {$F(z,\bar z,\lambda) : \tilde{M} \rightarrow \Lambda G_\sigma$
of $\mathcal{F}$} there exists some map
 $\rho_\gamma: \C^* \rightarrow \Lambda G_\sigma$ such that
 \begin{equation} \label{symmetry}
\gamma^*F(z,\bar z,\lambda)  = \rho_\gamma (\lambda) F(z,\bar z,\lambda)  {k(z, \bar z)},
\end{equation}
where $k$ is the $\lambda \textendash$independent function from $\tilde{M}$ into $K$ occurring in the previous equation.
Moreover, $\rho_{\gamma}(\lambda)|_{ \lambda= 1} = R$ holds.
  \end{enumerate}}
\end{lemma}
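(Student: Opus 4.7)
My plan is to dispatch (1) and (2) quickly as direct consequences of the universal-cover construction and the principal $K$-bundle $\pi : G \to G/K$, and then reduce (3) to a standard uniqueness argument for the extended-frame ODE. For (1), I pick any lift $\tilde \gamma : \tilde M \to \tilde M$ with $\tilde \pi \circ \tilde \gamma = \gamma \circ \tilde \pi$ and compute
$$\tilde{\mathcal F}(\tilde\gamma.z) = \mathcal F(\tilde\pi(\tilde\gamma.z)) = \mathcal F(\gamma.\tilde\pi(z)) = R.\mathcal F(\tilde\pi(z)) = R.\tilde{\mathcal F}(z).$$
For (2), both $F(\gamma.z,\overline{\gamma.z})$ and $R\,F(z,\bar z)$ project under $\pi$ to the same point $R.\tilde{\mathcal F}(z)$, so by the principal $K$-bundle property they differ by right multiplication by a smooth $K$-valued function $k(z,\bar z)$, giving \eqref{symmetry-nolambda}.

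For (3) I would first pass (2) to the Maurer--Cartan level. Since left multiplication by the constant element $R$ does not contribute to $F^{-1}\mathrm d F$, the identity $\gamma^* F = R F k$ yields
$$\gamma^* \alpha = Ad(k^{-1})\,\alpha + k^{-1}\mathrm d k.$$
Because $k$ takes values in $K$, the operator $Ad(k^{-1})$ preserves the Cartan decomposition $\mathfrak g = \mathfrak k \oplus \mathfrak p$ and $k^{-1}\mathrm d k$ lies entirely in $\mathfrak k$; because $\gamma$ is a biholomorphism of $M$, $\gamma^*$ commutes with the $(1,0)/(0,1)$ type decomposition. Splitting the previous identity along both decompositions produces
$$\gamma^* \alpha'_{\mathfrak p} = Ad(k^{-1})\alpha'_{\mathfrak p}, \qquad \gamma^* \alpha''_{\mathfrak p} = Ad(k^{-1})\alpha''_{\mathfrak p}, \qquad \gamma^* \alpha_{\mathfrak k} = Ad(k^{-1})\alpha_{\mathfrak k} + k^{-1}\mathrm d k.$$
Assembling these with the weights $\lambda^{-1}, 1, \lambda$ prescribed by \eqref{alphalambda} gives the loop-valued identity
$$\gamma^* \alpha_\lambda = Ad(k^{-1})\,\alpha_\lambda + k^{-1}\mathrm d k, \qquad \lambda \in \C^*.$$

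With this in hand, I set $A(z,\lambda) := \gamma^* F(z,\bar z, \lambda)$ and $B(z,\lambda) := F(z,\bar z, \lambda)\,k(z,\bar z)$. A direct computation shows $A^{-1}\mathrm d A = \gamma^*\alpha_\lambda$ while $B^{-1}\mathrm d B = Ad(k^{-1})\alpha_\lambda + k^{-1}\mathrm d k$, and the boxed identity shows these are equal. Uniqueness of solutions of $X^{-1}\mathrm d X = \beta$ on the simply connected cover $\tilde M$ therefore forces $\rho_\gamma(\lambda) := A(z,\lambda)\,B(z,\lambda)^{-1}$ to be independent of $z$, proving \eqref{symmetry}. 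Evaluating at $\lambda = 1$ and invoking (2) gives $\rho_\gamma(1) = RFk\cdot k^{-1} F^{-1} = R$, while $\sigma(F_\lambda) = F_{-\lambda}$ and $\sigma(k)=k$ (since $k\in K\subset G^\sigma$) yield $\sigma(\rho_\gamma(\lambda)) = \rho_\gamma(-\lambda)$; the reality $\rho_\gamma(\lambda)\in G$ for $\lambda\in S^1$ is inherited from $F_\lambda$, $k$, and $R$. The step I expect to be the principal obstacle is precisely the packaging of the pullback formula for $\alpha$ into the loop-valued pullback of $\alpha_\lambda$: everything hinges on the simultaneous compatibility of $\gamma^*$ with the Cartan splitting (forced by $k\in K$) and with the type splitting (forced by $\gamma$ holomorphic), and once this identity is established the remainder is a routine ODE-uniqueness argument on $\tilde M$.
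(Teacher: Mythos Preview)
Your proof is correct and follows essentially the same strategy as the paper: both treat (1) and (2) as immediate, then pass (2) to the Maurer--Cartan level to see that $\gamma^*\alpha_\lambda$ and $Ad(k^{-1})\alpha_\lambda + k^{-1}\mathrm d k$ agree, whence $\rho_\gamma(\lambda)$ is $z$-independent by ODE uniqueness. The only cosmetic difference is that for $\rho_\gamma(1)=R$ the paper evaluates \eqref{symmetry} at the base point $z_0$ (using $F(z_0,\bar z_0,\lambda)=e$), whereas you evaluate $A B^{-1}$ directly at $\lambda=1$ using (2); both are equally short and valid.
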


 {Note, since $\mathcal{F}$ is full, for each symmetry $(\gamma,R)$ the automorphism $R$ of $G/K$
is uniquely determined by $\gamma$. We therefore write $\rho_\gamma (\lambda) = \rho(\gamma,\lambda)$ and ignore $R$ in this notation. Also note that $\rho_\gamma$ actually is defined and holomorphic for all $\lambda \in \C^*$.}

\begin{proof}
 The first two equations follow immediately from the definitions. In view of (\ref{alphalambda})
the equality of Maurer-Cartan forms of (\ref{symmetry-nolambda}) implies the equality of the Maurer-Cartan forms of (\ref{symmetry}). Therefore only the last statement needs to be proven. But evaluating  (\ref{symmetry}) at the base point $z_0$ of $\tilde{\mathcal{F}}$ yields
 $F(\gamma.z_0,\overline{\gamma.z_0},\lambda)  = \rho_\gamma (\lambda) k (z_0,\bar z_0).$
 Hence we obtain
$\rho_\gamma: S^1 \rightarrow \Lambda G_\sigma,$ with holomorphic extension to $\C^*$, and putting $\lambda = 1$  we infer
 $F(\gamma.z_0,\overline{\gamma.z_0}) = \rho_\gamma k(z_0,\bar z_0).$ On the other hand, from
 (\ref{symmetry-nolambda})
we obtain  $F(\gamma.z_0, \overline{\gamma.z_0}) = R k(z_0,\bar{z}_0)$, whence
 $R =\rho_{\gamma}(\lambda)|_{ \lambda= 1} $.
\end{proof}

\begin{definition}
 {With the notation above,  the matrix $\rho_\gamma (\lambda), \lambda \in S^1,$ ( for all $\lambda\in\C^* $ in fact) occurring in (\ref{symmetry})  is called the
monodromy (loop)  matrix of $\gamma$  for $\mathcal{F}.$}
\end{definition}

The following result has been proven in Theorem 4.8 of  {\cite{Do-Wa-sym}.}

\begin{theorem}
Let $M$ be a Riemann surface which is either non-compact or compact of positive genus.
\begin{enumerate}
\item
 Let $\mathcal{F}:M \rightarrow G/K$ be a harmonic map and
  {$\tilde{\mathcal{F}}: \tilde{M} \rightarrow G/K$  its natural lift to the universal cover $\tilde{M}$ of $M$.} Then there exists a normalized potential and a holomorphic potential for $\mathcal{F}$, namely the corresponding  {potential} for $\tilde{\mathcal{F}}$.

\item Conversely, starting from some potential producing a harmonic map
$\tilde{\mathcal{F}}$ from $\tilde{M}$ to $G/K$, one obtains a harmonic map $\mathcal{F}$ on $M$ if and only if

\begin{enumerate}
\item  The monodromy matrices $\chi(g, \lambda)$ associated with
$g \in \pi_1 (M)$, considered as automorphisms of $\tilde{M}$, are elements of $(\Lambda G_{\sigma})^0$.

\item  There exists some $\lambda_0 \in S^1$ such that  {$\chi(g, \lambda)|_ { \lambda= \lambda_0} =e$, i.e.
\begin{equation*}
\begin{split} F(g.z,\overline{g.z},\lambda)|_{ \lambda= \lambda_0}&\equiv
\chi(g, \lambda)|_{ \lambda= \lambda_0} F(z, \bar{z}, \lambda)|_{ \lambda= \lambda_0}\ \mod\ K\\
&\equiv   F(z, \bar{z}, \lambda)|_{ \lambda= \lambda_0}\ \mod\ K
\end{split}
\end{equation*}}
for all $g \in \pi_1 (M)$.

\end{enumerate}

\end{enumerate}
\end{theorem}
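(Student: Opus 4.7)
The plan is to treat the two parts separately, using the DPW / holomorphic-potential theorems for part (1) and the symmetry lemma together with a Maurer--Cartan uniqueness argument for part (2).

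For part (1), since $M$ is non-compact or compact of positive genus, the universal cover $\tilde{M}$ is simply connected and non-compact, hence biholomorphic to $\C$ or to the unit disk, and in particular contractible. One therefore applies Theorems \ref{thm-DPW} and \ref{thm-CC} directly to the natural lift $\tilde{\mathcal{F}}:\tilde{M}\to G/K$ to obtain the normalized potential $\eta$ and a holomorphic potential on $\tilde{M}$; these serve by definition as ``the'' potentials of $\mathcal{F}$.

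For part (2), the underlying observation is that $\mathcal{F}$ exists on $M$ iff $\tilde{\mathcal{F}}$ is $\pi_1(M)$-invariant, i.e.\ $\tilde{\mathcal{F}}(g.z) = \tilde{\mathcal{F}}(z)$ for every deck transformation $g \in \pi_1(M)$. For the \emph{only if} direction, each such $g$ is a symmetry of $\tilde{\mathcal{F}}$ with trivial target automorphism $R = \mathrm{id}_{G/K}$. Lemma \ref{frametransform} then yields the monodromy loop $\chi(g,\lambda) = \rho_g(\lambda)$ satisfying $\chi(g,\lambda)|_{\lambda=1} = R = e$, which is exactly (b) with $\lambda_0 = 1$. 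Evaluating \eqref{symmetry} at $z = z_0$ and using $F(z_0, \bar z_0, \lambda) = e$ expresses $\chi(g,\lambda)$ as a product of loops lying in the identity component $(\Lambda G_\sigma)^0$ (here one uses continuity of $F$ on the connected set $\tilde M$ together with $F(z_0,\bar z_0,\lambda)=e$), yielding (a).

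For the \emph{if} direction, one constructs the meromorphic extended frame $F_-$ and then the real extended frame $F(z, \bar z, \lambda)$ from the given potential via the DPW scheme of Theorem \ref{thm-DPW}. For each $g \in \pi_1(M)$, Maurer--Cartan uniqueness applied to $F_-^{-1} \dd F_- = \eta$ (with the potential lifted so that it is $\pi_1(M)$-equivariant) yields a $z$-independent loop $\chi(g,\lambda) \in \Lambda G^\C_\sigma$; Iwasawa-decomposing both sides then produces a relation $F(g.z, \overline{g.z}, \lambda) = \chi(g,\lambda)\, F(z, \bar z, \lambda)\, k(z, \bar z)$ with $k$ a priori $K^\C$-valued. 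Hypothesis (a) upgrades $\chi$ to $(\Lambda G_\sigma)^0$ and forces $k$ to take values in $K$, while hypothesis (b) at $\lambda = \lambda_0$ gives $\chi(g, \lambda_0) \in K$. Consequently $z \mapsto F(z, \bar z, \lambda_0)\, K$ is $\pi_1(M)$-invariant and descends to a harmonic map $\mathcal{F}: M \to G/K$, harmonicity being inherited from $\tilde{\mathcal{F}}_{\lambda_0}$ since $\tilde{\pi}$ is a local biholomorphism.

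The main technical hurdle is making the monodromy $\chi(g,\lambda)$ completely rigorous starting only from the potential: one must show it is $z$-independent and has the correct $\sigma$-twisting, and one must verify that hypothesis (a) is precisely what is needed to pin $\chi$ into $(\Lambda G_\sigma)^0$ and keep the right factor $k$ in the real group $K$ rather than merely in $K^\C$. A secondary subtlety worth flagging is that the harmonic map produced in the \emph{if} direction is the associated-family member $\mathcal{F}_{\lambda_0}$, which need not coincide with $\tilde{\mathcal{F}}|_{\lambda=1}$ descended to $M$ when $\lambda_0 \neq 1$.
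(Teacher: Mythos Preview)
The paper does not give its own proof of this theorem: immediately before the statement it says ``The following result has been proven in Theorem 4.8 of \cite{Do-Wa-sym}'' and then simply records the statement. So there is no argument in this paper to compare your proposal against.

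That said, your outline is the natural one and matches how the surrounding material in the paper is set up. Part (1) is exactly an application of Theorems \ref{thm-DPW} and \ref{thm-CC} to the contractible cover $\tilde M$, and your ``only if'' direction of Part (2) is precisely Lemma \ref{frametransform} with $R = e$, which is how the paper organizes the symmetry/monodromy formalism.

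The one place where your write-up is genuinely incomplete---and you flag it yourself---is the ``if'' direction. Starting only from a potential on $\tilde M$, there is no reason that $g^*\eta$ and $\eta$ are related, so the phrase ``Maurer--Cartan uniqueness applied to $F_-^{-1}\mathrm d F_- = \eta$ (with the potential lifted so that it is $\pi_1(M)$-equivariant)'' hides the real content: the monodromy $\chi(g,\lambda)$ is defined at the level of the \emph{extended frame} $F$, not at the level of $F_-$, and its existence as a $z$-independent left factor is exactly what needs to be established before conditions (a) and (b) can even be stated. Once that is in place, the role of (a) is to ensure $\chi(g,\lambda)$ is real (in $\Lambda G_\sigma$) so that the Iwasawa decomposition pushes the residual factor into $K$ rather than $K^\C$, and (b) then makes $F(\cdot,\cdot,\lambda_0)\bmod K$ invariant under $\pi_1(M)$. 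Your remark that the descended map is the associated family member at $\lambda_0$ (not necessarily $\lambda=1$) is correct and worth keeping.
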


 {We also need the existence of normalized potentials for harmonic maps from a $2-$sphere
to an inner symmetric space, compact or non-compact.  An important difference to the previously discussed cases is that the extended frames will not be smooth globally on $S^2$,  but will be smooth (actually real analytic) on
$S^2 \setminus {\hbox{\{two points\}}}.$  See Remark \ref{sphere}.
The details can be found in $(2)$ of Remark \ref{sphere}
or Theorem 3.11 of \cite{DoWa12} which  we recall for the convenience of the reader:}

\begin{theorem}\label{normalized-potential-sphere} (Theorem 3.11 of  \cite{DoWa12})
Every  harmonic map from $S^2$ to any  Riemannian or pseudo-Riemannian
  symmetric space  $G/K$ admits an extended frame with at most two  singularities.
 Furthermore,  it admits a global meromorphic extended frame. In particular, every  spacelike conformal  harmonic map from $S^2$ to any  Riemannian or pseudo-Riemannian  symmetric space  $G/K$ can be obtained from some meromorphic normalized potential.
\end{theorem}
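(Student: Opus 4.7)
My plan is to obtain the extended frame and the normalised potential directly on two contractible coordinate charts covering $S^2$, and then to patch the data meromorphically across the punctures. Concretely, cover $S^2$ by $U_1 = S^2 \setminus \{p_1\}$ and $U_2 = S^2 \setminus \{p_2\}$ with $p_1 \neq p_2$; each chart is biholomorphic to $\C$ and hence contractible. Since each $U_i$ is simply-connected, the principal $K$-bundle $G \to G/K$ is trivial over $U_i$, so $\mathcal{F}|_{U_i}$ admits a smooth lift $F_i : U_i \to G$. Applying the Pohlmeyer prescription \eqref{alphalambda} and integrating \eqref{eq-F-int} from a base point in $U_i$, I obtain a real-analytic extended frame $\widetilde{F}_i : U_i \to \Lambda G_\sigma$ for $\mathcal{F}|_{U_i}$. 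Either $\widetilde{F}_i$, viewed as a map on $S^2$, is an extended frame with only one potential singular point; gluing the two frames via the gauge relation on the overlap yields a single extended frame for $\mathcal{F}$ on $S^2$ whose only possible singularities lie in $\{p_1,p_2\}$, proving the first claim.

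For the global meromorphic extended frame, I would apply the Birkhoff decomposition of Theorem \ref{thm-DPW}(2) to $\widetilde{F}_1$ on $U_1 \cong \C$: off a discrete subset one has $\widetilde{F}_1 = F_{-,1} \cdot F_{+,1}$ with $F_{-,1}$ meromorphic in $z \in U_1$. The decisive step is to verify that $F_{-,1}$ extends meromorphically across the puncture $p_1$. To see this, perform the analogous factorisation for $\widetilde{F}_2$ on $U_2$, yielding $F_{-,2}$ meromorphic on $U_2$ and hence holomorphic in $z$ near $p_1$. On $U_1 \cap U_2$ the two smooth extended frames are related by a smooth $K$-valued gauge together with a constant loop reflecting the choice of base points; uniqueness of the big-cell Birkhoff factorisation then forces $F_{-,1}$ and $F_{-,2}$ to agree on the overlap up to multiplication on the right by an element of $\Lambda^{+} G^\C_\sigma$. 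Since this ``$+$''-factor does not affect poles in $z$, the factor $F_{-,1}$ has at worst a pole at $p_1$, and therefore descends to a meromorphic loop-valued map $F_- : S^2 \dashrightarrow \Lambda G^\C_\sigma$, which is the required global meromorphic extended frame.

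The hardest point is controlling the Birkhoff factorisation at points where $\widetilde{F}_1$ momentarily leaves the big cell; this is especially delicate in the pseudo-Riemannian setting because of the multiple open Iwasawa cells mentioned in Remark \ref{S2}(4), and one must verify that the exceptional locus is a proper analytic subset of $U_1$, which by compactness of $S^2$ then reduces to a finite set. Finally, for the spacelike conformal case I would set $\eta = F_-^{-1} dF_-$; by construction this is a $\lambda^{-1}\mathfrak{p}^\C$-valued meromorphic $(1,0)$-form on $S^2$, i.e.\ the normalised potential. Running the reconstruction of parts (3) and (4) of Theorem \ref{thm-DPW} from $\eta$ yields a harmonic map on the complement of a discrete set, and the uniqueness statement identifies it with $\mathcal{F}$ off that set; the spacelike conformal hypothesis ensures that $F_-$ generically takes values in the open Iwasawa cell $\mathcal{I}_e$, so the Iwasawa step is well-defined and the reduction of $\mathcal{F}$ to a meromorphic normalised potential is complete.
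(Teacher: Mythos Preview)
Your approach is essentially the one the paper itself adopts: the statement is quoted from \cite{DoWa12} without proof, but the argument is sketched in Remark~\ref{sphere}(2), and it proceeds exactly as you do---cover $S^2$ by the two contractible charts $S^2\setminus\{p_1\}$ and $S^2\setminus\{p_2\}$, build an extended frame on each via Theorem~\ref{thm-DPW}, and then show that the Birkhoff factor $F_-$ obtained on one chart extends meromorphically across the missing point by comparing with the factor on the other chart.

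Two small points are worth tightening. First, for the opening claim no ``gluing'' is needed: each $\widetilde F_i$ already furnishes an extended frame on $S^2$ with a single possible singularity at $p_i$, so the bound ``at most two'' is immediate (and in fact one suffices for the frame alone). Second, your comparison of $F_{-,1}$ and $F_{-,2}$ on the overlap can be made sharper: choosing a common base point $z_0\in U_1\cap U_2$ and normalising both extended frames there, the relation $\widetilde F_1=c\,\widetilde F_2\,k$ forces $c=k(z_0)^{-1}\in K$, and uniqueness of the normalised Birkhoff factorisation then gives $F_{-,1}=k(z_0)^{-1}F_{-,2}\,k(z_0)$ on the overlap---i.e.\ the two minus-factors agree up to conjugation by a \emph{constant} in $K$, not merely up to a right $\Lambda^+$-factor. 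This immediately yields meromorphic extension of $F_{-,1}$ across $p_1$ without having to argue separately that the $\Lambda^+$-discrepancy ``does not affect poles in $z$'', which as stated is vague since that factor is a priori only smooth in $z$.
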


 {While we generally restrict in this paper to inner symmetric spaces, the last and the following theorem are stated more generally because of their importance.  As stated before, the case of outer symmetric spaces will not be considered in any detail in this paper.}
%%%%%%%%%%%%%%%%%%%%

\subsection{\bf{Invariant Potentials}}

 { If $M$ has a non-trivial fundamental group, then  invariant potentials are  of particular interest. The first part of the theorem below (the case of a non-compact $M$) has been proven first  in \cite{Do-Ha2} for the case of  harmonic maps into $S^2$.   The case of harmonic maps from a compact Riemann surface  $M$
 into the (inner) symmetric space $G/K=SO^+(1,n+3)/SO^+(1,3)\times SO(n)$ was treated in
 Section 6 of  \cite{Do-Wa-sym}. }

  {For the case of compact $M$  no proof for a general inner  (semi simple) symmetric space  is known.
 For the case of non-compact $M$  the published proofs are not very clear nor explicit.
 We therefore include a new proof using work of Bungart \cite{Bungart} and R\"ohrl \cite{Roehrl}.}

\begin{theorem} \label{thm-monodr}
Let $M$ be a Riemann surface.

\begin{enumerate}

 \item If $M = S^2$, then every harmonic map from $S^2$ to any symmetric space can be generated from some meromorphic (clearly invariant)  potential on $S^2$.

\item If $M$ is compact, but different from $S^2$, then every harmonic map from $M$ to
 {the inner symmetric space  $G/K = SO^+(1,n+3)/SO^+(1,3)\times SO(n)$}  can be generated by some meromorphic  potential defined on $M$, i.e. from  a  meromorphic potential defined on the (contractible) universal cover $\tilde{M}$ of $M$ which is invariant under the fundamental group of $M$.

\item If $M$ is non-compact, then every harmonic map from $M$ to any symmetric space  {$G/K$} can be generated from some invariant holomorphic potential on $M$, i.e. it can be generated from some holomorphic potential on the universal cover $\tilde{M}$ of $M$ which is invariant under the fundamental group of $M$.

\end{enumerate}
\end{theorem}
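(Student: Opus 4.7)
For part (1), the statement is immediate from Theorem \ref{normalized-potential-sphere}: it provides a global meromorphic normalized potential on $S^2$, and invariance under $\pi_1(S^2) = \{e\}$ is automatic. For part (2), I would invoke Section 6 of \cite{Do-Wa-sym}, where the construction produces a meromorphic $\pi_1(M)$-invariant potential on $\tilde{M}$ for the specific inner symmetric space $SO^+(1,n+3)/SO^+(1,3)\times SO(n)$, hence a meromorphic potential on $M$. So only part (3) requires genuinely new input.

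For part (3), since $M$ is a non-compact Riemann surface it is Stein, and its universal cover $\tilde{M}$ is contractible. Starting from the natural lift $\tilde{\mathcal{F}}: \tilde{M} \to G/K$, Theorem \ref{thm-CC} yields a holomorphic extended frame $\tilde{C}: \tilde{M} \to \Lambda G^{\mathbb{C}}_\sigma$ with holomorphic potential $\tilde{\eta} = \tilde{C}^{-1} d \tilde{C}$. Under each deck transformation $g \in \pi_1(M)$ the frame transforms as
\[
g^*\tilde{C}(z,\lambda) = \chi(g,\lambda)\,\tilde{C}(z,\lambda)\, V_+(g,z,\lambda),
\]
where $\chi(g,\cdot) \in \Lambda G_\sigma$ is the monodromy and $V_+(g,\cdot,\cdot): \tilde{M} \to \Lambda^+ G^{\mathbb{C}}_\sigma$ is holomorphic in $z$. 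A direct computation yields the cocycle identity $V_+(gh,z,\lambda) = V_+(h,z,\lambda)\, V_+(g,h.z,\lambda)$, so $V_+$ defines a holomorphic $\Lambda^+ G^{\mathbb{C}}_\sigma$-valued one-cocycle on $\pi_1(M)$ and, after descent to $M$, a holomorphic principal $\Lambda^+ G^{\mathbb{C}}_\sigma$-bundle $\mathcal{P} \to M$.

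The next step is to trivialize this cocycle: to produce a holomorphic map $T_+: \tilde{M} \to \Lambda^+ G^{\mathbb{C}}_\sigma$ satisfying $T_+(g.z,\lambda) = V_+(g,z,\lambda)^{-1} T_+(z,\lambda)$ for all $g \in \pi_1(M)$, which is equivalent to the holomorphic triviality of $\mathcal{P}$. This is exactly what the theorems of Bungart \cite{Bungart} and R\"ohrl \cite{Roehrl} supply: over a Stein base, any holomorphic principal bundle with a connected complex Banach Lie group as structure group is holomorphically trivial. After completing $\Lambda^+ G^{\mathbb{C}}_\sigma$ in a suitable Banach topology and restricting to the connected component containing the image of $V_+$, the bundle $\mathcal{P}$ falls within the scope of \cite{Bungart, Roehrl}, and since $M$ is Stein the conclusion applies. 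Setting $\hat{C} := \tilde{C}\, T_+$, one checks that $\hat{C}(g.z,\lambda) = \chi(g,\lambda)\, \hat{C}(z,\lambda)$, so the Maurer-Cartan form $\hat{\eta} = \hat{C}^{-1} d\hat{C}$ is $\pi_1(M)$-invariant and descends to a holomorphic potential on $M$ which still produces $\mathcal{F}$ via DPW.

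The main obstacle I foresee is matching the DPW-flavoured loop group $\Lambda^+ G^{\mathbb{C}}_\sigma$ to the Banach-analytic setting of Bungart's theorem: one must fix a Banach completion in which $\Lambda^+ G^{\mathbb{C}}_\sigma$ becomes a complex Banach Lie group, verify that the cocycle $V_+$ is holomorphic in that topology, and confirm that the abstractly produced trivialization $T_+$ still takes values in the smooth loop group used throughout the paper so that $\hat{\eta}$ remains admissible as a DPW potential.
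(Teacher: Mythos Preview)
Your proposal is correct and follows essentially the same route as the paper: parts (1) and (2) are dispatched by citation exactly as you do, and for part (3) the paper likewise builds a holomorphic extended frame on $\tilde M$, reads off a $\Lambda^+ G^{\mathbb C}_\sigma$-valued cocycle from its transformation under $\pi_1(M)$, splits that cocycle via R\"ohrl and Bungart, and concludes that the modified frame has $\pi_1(M)$-invariant Maurer--Cartan form. The only difference is granularity: where you invoke Bungart--R\"ohrl as a packaged ``holomorphic principal bundles with complex Banach Lie structure group over a non-compact Riemann surface are trivial'', the paper separates this into (i) vanishing of the \emph{continuous} $H^1$ by an obstruction-theoretic argument using $H_2(M,\mathbb Z)=0$, (ii) Bungart's isomorphism between continuous and holomorphic $H^1$, and (iii) an explicit cocycle-splitting following Forster; your concern about Banach completions is acknowledged implicitly in the paper but not addressed in more detail than you do.
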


\begin{proof}
(1) comes from $(2)$  of Remark \ref{sphere} and the reference given there.

 (2) comes from Section 6 of \cite{Do-Wa-sym}.

 (3) In this case $M$ is non-compact and $\mathcal{F} : M \rightarrow G/K$ is a harmonic map into an inner  symmetric space $G/K$.  Let $F(z,\bar z,\lambda)$ be an extended frame
of  $\mathcal{F}.$
Then by  \cite[Lemma 4.11]{DPW} we obtain a  $($real analytic$)$ matrix function
 $\tilde V_+: \tilde{ M} \rightarrow \Lambda^+ G^\C_\sigma$ such that the matrix
 $C$ defined by
\begin{equation}
 C(z,\lambda) := F(z,\bar z,\lambda) \tilde{V}_+(z,\bar z, \lambda)
\end{equation}
is holomorphic in $z \in \tilde{M}$ and $\lambda \in \C^*$.
 Moreover, $F(z,\bar z,\lambda)$ satisfies  (\ref{symmetry}).
 As a consequence,  $C$ inherits
 from $F(z,\bar z,\lambda)$ the transformation behaviour
  \begin{equation}
  C(\tau.z, \lambda) = M(\tau, \lambda)  C(z,\lambda) W_+(\tau, z, \lambda),
 \end{equation}
where $\tau \in  \pi_1(M)$  and  $W_+: \tilde{ M} \rightarrow  \Lambda^+ {G^\C}_\sigma$
 is holomorphic in $z$ and $\lambda \in \C^*$.  It is straightforward to verify the ``cocycle condition"
 \begin{equation}
W_+(\tau \mu,z,\lambda) =
W_+(\tau, \mu.z,\lambda) W_+(\mu,z,\lambda)  \hbox{ for all  $\tau, \mu \in \pi (M ).$}
\end{equation}
Our goal is to split the cocycle $W_+(\tau,z,\lambda)$ in $\Lambda^+ G^\C_\sigma$. For this we begin by following  the first few lines of the proof of Theorem 3 of  \cite{Roehrl}.The paper refers to complex Lie groups,
  which, in our case we consider the complex Banach Lie group $H = \Lambda^+ G^\C_\sigma$.
   If $H$ is a complex Banach Lie group, then we denote by
 ${(H_\omega})^\mathcal{C}$  the sheaf of  continuous sections
from open subsets of $M$ to $H$.
Similarly, by  ${(H_\omega})^\mathcal{H}$ we denote the sheaf of  holomorphic sections
from open subsets of $M$ to $H$.

  First we prove:
\begin{itemize}
\item[(a)]  Let $M$ be a non-compact Riemann surface and $H^\C$ a complex Banach Lie group
then $H^1(M, {(H_\omega)}^\mathcal{C})$ = 0.
\end{itemize}

For the convenience of the reader we translate the first 10 or so
lines of this proof:
For $\xi\in H^1(M,H)$  we need to prove that in the principal bundle associated
with $\xi$ there exists a continuous section. Since the principal bundle can contain, for dimension reasons, at most two-dimensional obstructions, it suffices for the existence of a continuous section the verification that the two-dimensional obstruction vanishes. But this obstruction is an element of $H^2(M, \pi_1(H))$. Moreover, for a non-compact Riemann surface $M$ it is known that $H_2(M,\mathbb{Z})$ vanishes, whence by the universal coefficient theorem also $H^2(M, \pi_1(H))$ vanishes. This proves claim \textrm{(a)}.
 From this we derive the complex Banach group version of
 \cite[Theorem 3]{Roehrl}:
\begin{itemize}
\item[(b)]
 Let $M$ be a non-compact Riemann surface and $H$ a complex Banach Lie group
then $H^1(M, {(H_\omega)}^\mathcal{H}) = 0.$
\end{itemize}
 {But \cite[Theorem 8.1]{Bungart} implies}
\[
 H^1(X, {(G^\C_\omega)}^\mathcal{C}) \cong H^1(X, {(G^\C_\omega)}^\mathcal{H}),
\]
and claim $(b)$ follows.

 To finish the proof of the splitting  result  for the cocycle  $W_+(\tau, z, \lambda),$ we can now use
 \cite[Exercise 31.1]{Forster}.
 For a detailed proof one can follow the proof of
 \cite[Theorem 31.2]{Forster}, but with
 $\Psi_i(z)=W_+(\eta_i(z)^{-1},z,
 \lambda)^{-1}$ where we refer for notation to loc.cit.

 We now know $ W_+(\tau, z, \lambda) = P_+(z,\lambda)  P_+(\tau.z,\lambda)^{-1} $ and consider
$\hat{C} =  C P_+$.
A simple computation shows
$ \hat{C}(\tau.z,\lambda) = M(\tau, \lambda) C(z, \lambda)$
  for all $\tau \in \pi_1(M)$ and all
 $\lambda \in \C^*$.  As a consequence, the differential one-form $ \eta = \hat{C}^{-1} \dd\hat{C}$
 is invariant under $\pi_1(M)$.
  \end{proof}

 {The differential $1$-form $ \eta = \hat{C}^{-1} \dd\hat{C}$ as   above  will be called
 an \emph{invariant holomorphic potential} for the   harmonic map $\mathcal{F}$.

%MMMMMMMMMMMMMMMMMMMMMMMMMMMMMMMMMMM

\section{\bf{Algebraic Harmonic maps and Totally Symmetric Harmonic Maps}}

%This section aims at interpreting all harmonic maps of finite uniton type in terms of loop group language. For this purpose, we focus first on simply-connected Riemann surfaces. Then we apply the results to Riemann surfaces with {non-trivial} topology, where monodromy appears naturally. We also provide the relations between harmonic maps of finite uniton type into non-compact symmetric spaces and their dual harmonic maps into compact symmetric spaces, which is vital for later applications. We end this section by some discussions on dressing actions on harmonic maps of finite uniton type.
%%%%%%%%%%%%%%%%%%%%%%%%%%%%%%%

%\subsection{Finite uniton type harmonic maps defined on simply-connected Riemann surfaces} \label{f.u.1-connM}

In this section we denote by $\tilde{M}$ a simply-connected Riemann surface, i.e., $S^2$,  $\E=\{z\in\C\ | \ |z|<1 \}$,
or $\C$. Let $G/K$ denote an inner symmetric space and $\mathcal{F}:\tilde{M}\rightarrow G/K$ a harmonic map. Here $G$ is a semi-simple Lie group with trivial center.
 We would like to recall more conventions:
{Let $\mathcal{F}:\tilde{M} \rightarrow G/K$ be a harmonic map defined on a simply-connected Riemann
surface $\tilde{M}$. Then we assume  {at least tacitly} that a basepoint  $z_0\in \tilde {M}$  is chosen and
we also assume that any extended frame $F$, the corresponding normalized  extended frame $F_-$  and  {any} holomorphic extended frame $C$  {associated with an extended frame $F$} all attain the  value $e$ at $z_0$.}
 {Some special feature for the case of $M = S^2$ has been addressed in item  {$(2)$} of Remark \ref{sphere}, including the reference given there.}

%%%%%%%%%%%%%%%%%%%%%%%%%%%%%%%
\subsection{\bf{Algebraic Harmonic Maps}}

We first consider the property $(U2)$ of Theorem \ref{Theorem1.1}.

\begin{definition} (Algebraic Harmonic Maps) Let $\tilde{M}$ be a simply-connected Riemann surface. A harmonic map $\mathcal{F}:\tilde{M}\rightarrow G/K$  {is said} to be of finite uniton type if some  extended frame $F$ of $\mathcal{F},$ i.e. some frame satisfying $F(z_0,\bar z_0, \lambda)=e$ for some (fixed) base point $z_0\in \tilde{M},$ is a Laurent polynomial in $\lambda$.
\end{definition}

 \begin{remark}
 \begin{enumerate}
     \item Note that the condition
 $F(z_0,\bar z_0,\lambda) = e$ is not a restriction. Assume some $\hat{F}$
satisfies the conditions  of the definition above, except $\hat{F}(z_0,\bar z_0,\lambda) = e$,
then $F(z,\bar z,\lambda) = \hat{F}(z_0,\bar z_0,\lambda)^{-1} \hat{F}(z,\bar z,\lambda)$ satisfies all conditions.
\item
We would also like to point out that for any extended frame $F$ of some harmonic map
which is a Laurent polynomial in $\lambda$, both factors in the unique meromorphic Birkhoff decomposition  $F(z,\bar z,\lambda) = F_-(z,\lambda) F_+(z,\bar z,\lambda)$, i.e. assuming
$F_-(z,\lambda) = e + \mathcal{O}(\lambda^{-1})$, also are Laurent polynomials.
\item If a harmonic map is algebraic, then any extended frame is a Laurent polynomial in $\lambda$.
 \end{enumerate}

\end{remark}

\begin{proposition}\label{prop-fut}   Let $\mathcal{F}:\tilde{M} \rightarrow G/K$ be a harmonic map defined on a  {contractible} Riemann surface $\tilde{M}$.
 { Let $z_0\in \tilde {M}$ be a base point. Then the following statements are equivalent:}
\begin{enumerate}
\item   $\mathcal{F}$ is  an algebraic harmonic map.
\item There exists a  frame  {$F(z,\bar z,\lambda)$} of $\mathcal{F}$ which is a Laurent polynomial in $\lambda$.
\item  The normalized extended frame  {$\hat{F}_-(z,\lambda)$ of any extended frame
$\hat{F}(z,\bar z,\lambda)$ of $\mathcal{F}$} is a Laurent polynomial in $\lambda$.
\item  Every holomorphic  {extended frame $\hat{C}(z,\lambda)$}  associated to any extended frame
 {$\hat{F}(z,\bar z,\lambda)$} of $\mathcal{F},$ thus satisfying $\hat{C}(z_0, \lambda) = e$ for all $\lambda,$
 only  contains finitely many negative powers of $\lambda$.
\item There exists a holomorphic extended  { frame $C^\sharp(z,\lambda)$ } which  { only contains} finitely many negative powers of $\lambda$.
\end{enumerate}
    For the case $\tilde{M} = S^2,$  the above   {equivalences remain} true, if one replaces in the last two statements the word ``holomorphic" by ``meromorphic" and  {also admits for all
    frames  at most two singularities.}
\end{proposition}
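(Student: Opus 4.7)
My plan is to establish the equivalence of the five statements by running the cycle $(1) \Leftrightarrow (2)$ together with $(1) \Rightarrow (3) \Rightarrow (4) \Rightarrow (5) \Rightarrow (3) \Rightarrow (1)$. The equivalence $(1) \Leftrightarrow (2)$ is immediate from item $(1)$ of the preceding Remark: a Laurent polynomial frame can be renormalized to take the value $e$ at $z_0$ by left multiplication with its value there, which preserves the Laurent polynomial property.

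For the easier implications I would argue as follows. Step $(1) \Rightarrow (3)$ combines item $(2)$ of the Remark, which says the Birkhoff factor $F_-$ of a Laurent polynomial extended frame is a Laurent polynomial in $\lambda^{-1}$, with the uniqueness of $F_-$: if $F$ and $\hat{F}$ are two extended frames of $\mathcal{F}$ both normalized at $z_0$, then $\hat{F} = F \cdot k \cdot k(z_0,\bar z_0)^{-1}$ for some $k : \tilde M \to K$, and uniqueness of the meromorphic Birkhoff factorization gives $\hat{F}_- = F_-$. For $(3) \Rightarrow (4)$, writing $\hat{F} = F_- \hat{F}_+$ and $\hat{C} = \hat{F} \hat{V}_+ = F_- \cdot (\hat{F}_+ \hat{V}_+)$ with the second factor in $\Lambda^{+} G^{\mathbb{C}}_{\sigma}$, the product inherits the finite lower-degree bound from $F_-$. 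Step $(4) \Rightarrow (5)$ is trivial. For $(5) \Rightarrow (3)$, I would Birkhoff-decompose $C^\sharp = C^\sharp_- \cdot C^\sharp_+$ with $C^\sharp_- \in \Lambda_{*}^{-} G^{\mathbb{C}}_{\sigma}$ and $C^\sharp_+ \in \Lambda^{+} G^{\mathbb{C}}_{\sigma}$; a coefficient comparison, using that the constant term of $C^\sharp_+$ in $(K^{\mathbb{C}})^0$ is invertible, shows that $C^\sharp_-$ is a polynomial in $\lambda^{-1}$ of degree bounded by the pole order of $C^\sharp$ at $\lambda = 0$, and uniqueness of the normalized potential identifies $C^\sharp_-$ with $F_-$.

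The main obstacle is the closing step $(3) \Rightarrow (1)$, where the real-form structure of the loop group enters decisively. Given $F_-$ polynomial of degree $k$ in $\lambda^{-1}$, Iwasawa yields $\hat{F} = F_- \hat{F}_+$ with $\hat{F}_+ \in \Lambda^+_{\mathcal{C}} G^{\mathbb{C}}_{\sigma}$, so the Fourier expansion $\hat{F}(\lambda) = \sum_j \hat{F}_j \lambda^j$ on $S^1$ already satisfies $\hat{F}_j = 0$ for $j < -k$. The decisive point is that $\hat{F}(\lambda) \in G$ for $\lambda \in S^1$: since $\bar\lambda = \lambda^{-1}$ on $S^1$ and $\tau$ is anti-linear, the reality condition $\tau(\hat{F}(\lambda)) = \hat{F}(\lambda)$ on $S^1$ translates into the Fourier symmetry $\tau(\hat{F}_j) = \hat{F}_{-j}$ for all $j$. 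This forces $\hat{F}_j = 0$ for $j > k$ as well, so $\hat{F}$ is a Laurent polynomial. Because the argument only requires $\tau$ to be an anti-holomorphic involution defining $G \subset G^{\mathbb{C}}$, it applies uniformly to compact and non-compact inner symmetric spaces.

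For the case $\tilde{M} = S^2$ I would invoke item $(2)$ of Remark \ref{sphere} and Theorem \ref{normalized-potential-sphere} to obtain global meromorphic extended frames with at most two singular points. The Birkhoff, Iwasawa, and reality steps above are pointwise in $z$, so replacing ``holomorphic'' by ``meromorphic'' and tolerating up to two frame singularities preserves the entire cycle of implications without substantive change.
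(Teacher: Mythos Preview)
Your proof is correct and follows essentially the same approach as the paper. The only differences are in the routing of the implications: the paper closes the cycle via $(4)\Rightarrow(2)$, Iwasawa-decomposing a holomorphic extended frame $\hat C$ rather than $F_-$, and handles $(4)\Leftrightarrow(5)$ by noting that any two holomorphic extended frames differ by a factor in $\Lambda^+ G^{\mathbb C}_\sigma$, whereas you go through $(5)\Rightarrow(3)$ by Birkhoff-decomposing $C^\sharp$. The decisive step in both arguments is the same reality observation---a loop in $\Lambda G_\sigma$ with only finitely many negative Fourier coefficients is automatically a Laurent polynomial---which the paper states in one line (``Since $F$ is real, it is a Laurent polynomial'') and which you spell out via $\tau(\hat F_j)=\hat F_{-j}$.
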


\begin{proof}
 For a contractible domain, the definition of an algebraic harmonic map  can be
rephrased by $(1) \Leftrightarrow (2)$. Hence we only need to show that  $(2), (3), (4)$ and $(5)$ are equivalent.

(2) $\Rightarrow $  (3):  {Let $F(z,\bar z,\lambda)$ and $\hat{F}(z,\bar z,\lambda)$ be as in $(2)$ and $(3)$ respectively,
w.l.g. being extended frames. Then
$\hat{F} (z,\bar z,\lambda)= F (z,\bar z,\lambda)k(z,\bar z,\lambda)$. Inserting the unique Birkhoff decompositions $F(z,\bar z,\lambda) = F_-(z,\lambda)F_+(z,\bar z,\lambda)$ and $\hat{F}(z,\bar z,\lambda) = \hat{F}_- (z,\lambda)\hat{F}_+(z,\bar z,\lambda)$  we infer $F_-(z,\lambda) = \hat{F}_-(z,\lambda)$ and the claim follows.}

(3) $\Rightarrow$ (4):  {By Theorem \ref{thm-CC},}  { $ \hat{C}(z,\lambda) = \hat{F} (z,\bar z,\lambda)\hat{V}_+(z,\bar z,\lambda)$, where
$\hat{V}_+ $ is actually real-analytic.
 Inserting the   Birkhoff decomposition $\hat{F}(z,\bar z,\lambda) = \hat{F}_-(z,\lambda) \hat{F}_+(z,\bar z,\lambda)$, we obtain
 $\hat{C} (z,\lambda)= \hat{F}_-(z,\lambda) \hat{F}_+(z,\bar z,\lambda) V_+(z,\bar z,\lambda) $ and the claim follows.}

(4) $\Rightarrow$ (2):  {Let $\hat{C}(z,\lambda)$ be any holomorphic extended frame for $\mathcal{F}$. Consider the Iwasawa decomposition  $F(z,\bar z,\lambda)=\hat{C}(z,\lambda)  \tilde{C}_+(z,\bar z,\lambda)$ near $z_0$, where  $\tilde{C}_+\in \Lambda^{+} G^{\mathbb{C}}_{\sigma}$
and where $F(z,\bar z,\lambda)$ and  $\tilde{C}_+(z,\bar z,\lambda)$ attain the  value $e$ at $z_0$.
Since $\hat{C}(z,\lambda) $ only contains finitely many negative powers of $\lambda$ by assumption, also $F(z,\bar z,\lambda)$ contains only finitely many negative powers of $\lambda$. Since $F(z,\bar z,\lambda)$ is real, it is a Laurent polynomial.}

(4) $ \Leftrightarrow$ (5): Note that for any two holomorphic extended frames $C_1(z,\lambda)$ and $C_2(z,\lambda)$ there exists some $W_+(z,\lambda)\in \Lambda^{+}G^{\mathbb{C}}_{\sigma}$ such that $C_1(z,\lambda)=C_2(z,\lambda)W_+(z,\lambda)$ holds.

 For the case $\tilde{M}=S^2$,   {using the results just proven} for $M_1=S^2\setminus\{\infty\}$ and $M_2=S^2\setminus\{0\}$ respectively, one will obtain  {the last claim of the proposition. For more details see
 $(2)$  of Remark \ref{sphere} and Section 3.2 of \cite{DoWa12}.}
 \end{proof}

We wonder, under what conditions the
based (at $z_0$) normalized  extended frame  {$F_-(z,\lambda)$} will be a Laurent polynomial.
 Let $\eta$ denote the normalized potential  { $F_-(z,\lambda)^{-1} \mathrm{d} F_-(z,\lambda) = \eta$.
 Then $F_-(z,\lambda), F_-(z,\lambda)|_{z=z_0} = e$}, can be obtained from $\eta$ by an application  of the  {standard Picard iteration of the theory of ordinary differential equations. Since $\eta$ is a multiple of
 $\lambda^{-1},$} it is easy to see that each step of the Picard iteration
  {decreases the occurring power of $\lambda$ by $-1$.} So $F_-(z,\lambda)$ is a Laurent polynomial if and only if the Picard iteration stops after finitely many steps. { See for example, Section 1 of \cite{Gu2002}}.
The most natural reason for the  Picard iteration to stop is that the  normalized potential $\eta$ takes values in some nilpotent Lie algebra \footnote{Note: If $\eta(z)$ is only nilpotent for every $z\in \tilde M$, it does not follow, in general, that the Picard iteration will stop.}.

The following result is a slight generalization of Appendix B of \cite{BuGu}, (1.1) of \cite{Gu2002}. We would like to point out  that in particular $G$ does not need to be compact.

\begin{proposition}  Let $\mathfrak{n}\subset\mathfrak{g}^{\C}$ be a nilpotent subalgebra and assume that $\eta$ is a (holomorphic or meromorphic) potential of some harmonic map such that  $\eta(z)\in\mathfrak{n}$ for all
 $z\in \tilde M \setminus \lbrace$ poles of $\eta \rbrace $, and $\eta$ only contains finitely many positive powers of $\lambda$.  Let $\mathcal{F}:\tilde{M}_0\rightarrow G/K$ be the harmonic map associated with $\eta$ on an open subset $\tilde{M}_0\subset\tilde{M},$ $z_0 \in \tilde{M}_0$.  Then  $\mathcal{F}$ is algebraic..
\end{proposition}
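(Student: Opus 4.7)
The plan is to construct the holomorphic extended frame $C(z,\lambda)$ associated to $\eta$ via Picard iteration and to exploit the nilpotency of $\mathfrak{n}$ to show that this iteration terminates after finitely many steps. The resulting finite expression for $C$ will be a Laurent polynomial in $\lambda$, and then algebraicity of $\mathcal{F}$ follows from part $(5)$ of Proposition \ref{prop-fut}.

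First I would introduce $C(z,\lambda)$ as the unique solution of $C^{-1}\dd C = \eta$ with $C(z_0,\lambda) = e$ on a simply-connected open subset of $\tilde{M}_0$ avoiding the poles of $\eta$. By classical Picard iteration, $C$ is given by the Dyson series
\[
C(z,\lambda) = e + \sum_{n\geq 1} I_n(z,\lambda), \qquad I_n(z,\lambda) = \int_{\Delta_n(z_0,z)} \eta(w_1,\lambda)\,\eta(w_2,\lambda)\cdots \eta(w_n,\lambda),
\]
where $\Delta_n(z_0,z)$ denotes the ordered simplex of points along a chosen path from $z_0$ to $z$ (the choice of path being irrelevant on a simply-connected domain because of the zero-curvature condition $\dd\eta + \eta\wedge\eta = 0$ satisfied by any potential).

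Next, since $\mathfrak{n} \subset \mathfrak{g}^\C$ is nilpotent in the sense relevant here, namely consisting of nilpotent matrices in the ambient matrix algebra (as in the parabolic/unipotent situation of Appendix~B of \cite{BuGu}; by Engel's theorem this amounts to a simultaneous conjugation of $\mathfrak{n}$ into strictly upper triangular form), there exists an integer $N \geq 1$ such that every associative product of $N$ elements of $\mathfrak{n}$ vanishes. Because each factor $\eta(w_i,\lambda)$ takes values in $\mathfrak{n}$, the summand $I_n$ vanishes identically for $n \geq N$, and the Dyson series collapses to a \emph{finite} sum. Combining the hypothesis of finitely many positive powers of $\lambda$ with the DPW normalization that $\lambda\eta$ is holomorphic in $\lambda \in \C$ forces $\eta$ to be a Laurent polynomial in $\lambda$; since each $I_n$ is a $z$-integral of a product of Laurent polynomials in $\lambda$, it too is a Laurent polynomial in $\lambda$, and hence so is $C(z,\lambda)$. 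In particular $C$ contains only finitely many negative powers of $\lambda$, and part $(5)$ of Proposition \ref{prop-fut} yields that $\mathcal{F}$ is algebraic.

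The main obstacle lies precisely in the termination claim above: the property that sufficiently long associative products of elements of $\mathfrak{n}$ vanish is \emph{not} a consequence of abstract Lie-theoretic nilpotency of $\mathfrak{n}$, as the footnote to the proposition already cautions (the abelian subalgebra of diagonal matrices, for instance, is Lie-nilpotent but its associative powers do not vanish). One must therefore interpret ``nilpotent subalgebra'' in the appropriate unipotent sense; once this is granted, Engel's theorem supplies the common triangularization which produces the required integer $N$.
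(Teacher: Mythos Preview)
Your argument is essentially identical to the paper's: solve $\mathrm{d}C = C\eta$, $C(z_0,\lambda)=e$ by Picard iteration, observe that nilpotency of $\mathfrak{n}$ forces the iteration to terminate so that $C$ is a Laurent polynomial in $\lambda$, and then invoke Proposition~\ref{prop-fut}. The paper's proof is two sentences and does not write out the Dyson series explicitly.

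One small correction and one comment. The footnote you cite is warning against a \emph{different} pitfall --- that pointwise nilpotency of the matrix $\eta(z)$ for each $z$ (without the values lying in a common nilpotent subalgebra) does not guarantee termination --- rather than the distinction between Lie-theoretic nilpotency and vanishing of associative products that you raise. Your own concern is nonetheless legitimate and is not addressed in the paper either: an abelian subalgebra such as a Cartan subalgebra is Lie-nilpotent, yet its associative powers never vanish and the Picard iteration would not stop. The intended reading, consistent with the Burstall--Guest setting referenced immediately before the proposition, is indeed that $\mathfrak{n}$ consists of nilpotent elements (e.g.\ sits inside the nilradical of a parabolic), and your appeal to Engel's theorem for simultaneous strict triangularization is then exactly the right mechanism.
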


\begin{proof}
When  $\eta(z)\in\mathfrak{n}$ for all $z\in \tilde M$, then it is easy  to see that the Picard iteration producing the solution
 {$\mathrm{d} C(z,\lambda)= C(z,\lambda) \eta$,} $C(z_0,\lambda)=e$, stops after finitely many steps.
Since $\eta$ is a Laurent polynomial in $\lambda$,  {so is the solution} $C$. By Proposition \ref{prop-fut}, $\mathcal{F}$ is algebraic.
\end{proof}

\begin{remark}
\
\begin{enumerate}
\item
The conformal Gauss maps of many Willmore surfaces (see \cite{DoWa11}, \cite{Wang-1} and \cite{Wang-3}) can be constructed as discussed in the proposition above.
In these examples  the group $G$ is non-compact.

 \item For harmonic maps into arbitrary Lie  groups $G$ with a bi--invariant non--degenerate metric, one can also produce harmonic maps of finite uniton type following the above procedure by considering $G$ as the symmetric space
$(G \times G)/G$. Harmonic $2-$spheres in $U(4)$
provide standard examples for the proposition (see Section 5 of \cite{BuGu} or Appendix B of \cite{Gu2002}). In this case the group $G = U(4)$ is compact.
\item
For the construction of examples of Willmore spheres it is  important to show that harmonic maps defined on $M=S^2$ are algebraic. We will show this in \cite{DoWa-fu2}.
\end{enumerate}
\end{remark}

% First of all  { we reiterate that the loop group approach in the sense of this paper,  {i.e. the DPW method,} using extended frames and not extended solutions,
%also applies to harmonic maps from $S^2$ to any compact or non-compact
%inner symmetric space ( see Remark \ref{sphere} above). }

 %{Using the same approach one can retrieve the result of Uhlenbeck Theorem \cite{Uh},
%Segal \cite{Segal}, Burstall-Guest \cite{BuGu}  for compact $G/K$
%and generalize it to the case of non-compact inner symmetric spaces  $G/K$.}

%\begin{theorem} \label{S2isfu}(Theorem 3.6, of \cite{DoWa13})  For every compact or non-compact {inner} symmetric space $G/K$ every harmonic map $\mathcal{F}: S^2 \rightarrow G/K$ is
%of finite uniton type.
%\end{theorem}

We close this subsection by an application of the Duality Theorem in \cite{DoWa13}.
\begin{theorem}   \label{duality:algebraic}
We retain the general assumptions of this paper and assume in addition that $M$ is contractible.
Let $G/K$ be an inner  symmetric space of non-compact type and  $\tilde{U}/(\tilde{U}\cap \tilde{K}^{\C})$
its compact dual.
Let $\mathcal{F} : M \rightarrow G/K$ and
$\tilde{\mathcal{F}}_U : \tilde{U} \rightarrow   \tilde{U}/(\tilde{U}\cap \tilde{K}^{\C})$
be a ``dual pair" of harmonic maps as constructed in then Appendix, then
\[\hbox{ $\mathcal{F}$   is algebraic  $\Longleftrightarrow$  $\tilde{\mathcal{F}} $ is algebraic.}\]
\end{theorem}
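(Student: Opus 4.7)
The plan is to reduce the claim to Proposition \ref{prop-fut}(3), which characterizes an algebraic harmonic map by the property that its normalized meromorphic extended frame $F_-(z,\lambda)$ is a Laurent polynomial in $\lambda$. Since $F_-$ is determined by the linear ODE $F_-^{-1}\dd F_- = \eta$ with $F_-(z_0,\lambda) = e$, where $\eta$ is the normalized potential, the property of being algebraic is encoded entirely in the pair $(\eta,z_0)$.

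First I would recall the construction of the dual pair from the Appendix. Both $G$ and $\tilde U$ are real forms of the same complex Lie group $G^{\C}$, and they share the same involution $\sigma$ and the same Cartan decomposition $\mathfrak g^{\C} = \mathfrak k^{\C}\oplus\mathfrak p^{\C}$. A normalized potential is, by definition, a $\lambda^{-1}\mathfrak p^{\C}$-valued meromorphic $(1,0)$-form on $\tilde M$, an object intrinsic to $G^{\C}$ and independent of the choice of real form. The duality construction precisely uses this: one takes the common meromorphic frame $F_-(z,\lambda)$ defined by $\eta$ and forms the harmonic map $\mathcal F$ by the Iwasawa decomposition of $F_-$ with respect to $G$, while $\tilde{\mathcal F}_U$ is obtained by the Iwasawa decomposition of the \emph{same} $F_-$ with respect to $\tilde U$. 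Hence both harmonic maps of a dual pair possess one and the same normalized meromorphic extended frame $F_-(z,\lambda)$.

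Given this, the equivalence is immediate: by Proposition \ref{prop-fut}(3), $\mathcal F$ is algebraic iff $F_-(z,\lambda)$ is a Laurent polynomial in $\lambda$, and the analogous statement for $\tilde{\mathcal F}_U$ involves the same $F_-$. Thus $\mathcal F$ algebraic $\Longleftrightarrow$ $\tilde{\mathcal F}_U$ algebraic.

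The only step requiring genuine verification, and the place where I expect all the work to sit, is the assertion that the dual harmonic maps of the Appendix really do share the normalized potential together with the base-point normalization $F_-(z_0,\lambda)=e$. This should follow directly from the definitions in \cite{DoWa13} because the negative-frequency factor $F_-$ lives in $\Lambda^{-}_{*}G^{\C}_\sigma$, which depends only on the complex group and the involution, not on the real form chosen for the Iwasawa splitting; once this identification is spelled out, no further computation is needed.
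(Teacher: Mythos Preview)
Your argument is correct, but it takes a slightly different route from the paper's. The paper does not pass through $F_-$ and Proposition~\ref{prop-fut}(3); instead it uses directly the ``crucial Iwasawa decomposition formula'' from the Appendix,
\[
F(z,\bar z,\lambda)=F_{\tilde U}(z,\bar z,\lambda)\,S_+(z,\bar z,\lambda),\qquad S_+\in\Lambda^+G^{\C}_\sigma,
\]
relating the two extended frames themselves. From this, if $F$ is a Laurent polynomial in $\lambda$ then $F_{\tilde U}=F\,S_+^{-1}$ has only finitely many negative powers, and reality of $F_{\tilde U}$ in $\tilde U$ forces it to be a Laurent polynomial; the converse is symmetric. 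Your approach and the paper's rest on the same structural fact --- the two frames differ by a $\Lambda^+G^{\C}_\sigma$-factor --- but you filter it through the Birkhoff decomposition to conclude that both maps share the same $F_-$, and then invoke Proposition~\ref{prop-fut}(3). That is one extra step, though it has the merit of making explicit that the normalized potential is a $G^{\C}$-object independent of the real form. Note also that the ``verification'' you flag in your last paragraph is precisely the displayed Iwasawa formula: once you have $F=F_{\tilde U}S_+$, uniqueness of the Birkhoff factorization immediately gives $(F_{\tilde U})_-=F_-$, so no separate argument about the Appendix construction is needed.
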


\begin{proof}
This follows from the crucial Iwasawa decomposition formula
$$F(z,\bar{z},\lambda)=F_{\tilde{U}}(z,\bar{z},\lambda) S_+(z,\bar{z},\lambda)$$
which can be read both ways.
So if $F(z,\bar{z},\lambda)$ is an extended frame for $\mathcal{F}$ which is a Laurent polynomial in
$\lambda$, then
$F_{\tilde{U}}(z,\bar{z},\lambda) = F(z,\bar{z},\lambda) S_+(z,\bar{z},\lambda)^{-1}$  shows that also
$F_{\tilde{U}}(z,\bar{z},\lambda)$ only contains finitely many negative powers of $\lambda.$ Since
$F_{\tilde{U}}(z,\bar{z},\lambda)$ is real, it follows that $F_{\tilde{U}}(z,\bar{z},\lambda)$ actually is a
Laurent polynomial in $\lambda$. The converse follows analogously.
\end{proof}

For the case $M = S^2$,  we can perform Theorem \ref{duality:algebraic} to  $S^2 \setminus{ \{p_1\}}$ and $ S^2 \setminus{ \{p_2\}} $ respectively, in the same way as done in \cite[Section 3.2]{DoWa12}.
%MMMMMMMMMMMMMMMMMMMMMMMMMMMMMM

\subsection{ \bf{Totally Symmetric Harmonic maps}} \label{trivmonorep}

We retain the general assumptions of this paper, and of this section. In the last subsection we considered exclusively simply-connected Riemann surfaces $M$. Obviously then, the monodromy representation of any harmonic map $\mathcal{F}:M \rightarrow G/K$
is trivial. But there also exist non-simply-connected Riemann surfaces which admit harmonic maps $\mathcal{F}:M \rightarrow G/K$ with  trivial monodromy representation. It is this type of harmonic maps which we will discuss in this subsection.

%%%%%%%%%%%%%%%%%%%%%%%%%%%%%%%%%%%%
%\subsubsection{ {Definition of ``finite uniton type"}}
%%%%%%%%%%%%%%%%%%%%%%%%%%%%%%%%%%%%

%In \cite{BuGu}, Burstall and Guest generalize the work of Uhlenbeck on harmonic maps
%$\mathcal{F}:S^2 \rightarrow G$, $G=U(n)$,  to harmonic maps of ``finite uniton number" from a general Riemann surface into compact Lie groups  $G$  as well as compact inner symmetric spaces $G/K$.

 %In our  {definition  below}  we separate out two  properties/parts contained implicitly in the definition of \cite{BuGu}, just above  loc.cit.,  Theorem 1.2,  and we also admit non-compact Lie groups and inner symmetric spaces.   The first property is that one considers maps which actually are defined on $M$, whence do not have any monodromy. The second property is that one can assume w.l.g. that the extended solution used in \cite{BuGu} is a Laurent polynomial in $\lambda$. These two properties define two classes of harmonic maps, the intersection of which gives the harmonic maps of finite uniton number. It would be interesting to investigate these two classes separately.

\begin{definition}\label{def-uni} (Totally Symmetric Harmonic Maps) {We retain all general assumptions made for this paper. Let $M$ be a Riemann surface with universal cover
$\tilde{M}$ and let $G/K$ be an inner symmetric space. A harmonic map
$\mathcal{F}:M\rightarrow G/K$ is said to be \emph{totally symmetric,}  if there exists some extended frame $F(z,\bar{z},\lambda):\tilde M\rightarrow G$ of $\mathcal{F}$} which has trivial monodromy  for all $g \in \pi_1(M)$; i.e.
\[F(g.z,\overline{g.z},\lambda)=  F(z,\bar{z},\lambda) k(g,z,\bar{z}),\hbox{for all $g \in \pi_1(M)$ and all $\lambda \in S^1$ (resp. $\lambda \in \C ^*)$.}\]
This is equivalent to that
 for all $\lambda \in S^1,$ any extended frame $F(z,\bar{z},\lambda): \tilde{M} \rightarrow (\Lambda  {G _{\sigma})} $ descends to a well defined map on $M$, i.e.
    $$F(z,\bar{z},\lambda): M \rightarrow (\Lambda G_{\sigma})/K,$$
up to two singularities in the case of $M = S^2$.
 \end{definition}

 \begin{remark}\
  \begin{enumerate}
  \item
{We would also like to point out that we will use, by abuse of notation,  the same notation for the frame $F$ with values in
$\Lambda G _{\sigma}$ and its projection with values in $ \Lambda G_{\sigma}/K$.}
\item
Moreover, we sometimes express equivalently ``totally symmetric" by ``with trivial monodromy (representation)".
  \item Note that  in Section 4 below we discuss the relations between the DPW theory (using extended frames) and the Burstall-Guest theory (using extended solutions).  It turns out that  the properties ``algebraic" and ``totally symmetric"
 taken together are closely related to  the notion of  a minimal  (finite) uniton number
 harmonic  map defined on page 546 of \cite{BuGu}. ( Also see Proposition \ref{typeequivnumber}.)

% \item
%We mainly want to use objects occurring in our approach (like extended frames) and also want to make it as clear as possible to what extent the two properties which make up the notion of
 %{finite uniton type and minimal (finite) uniton number respectively  contribute to statements and proofs.}

\item Condition of being ``totally symmetric" is a very strong condition. Of course, in the case of a non-compact simply connected Riemann surface $M$   {and  {of} $S^2$ respectively,} it is always satisfied.
%{We would like to point out that although the assumption $(U1)$ was not stated in \cite{BuGu} explicitly, it was used implicitly by using exclusively extended solutions defined on all of $M$, like in \cite{BuGu}, Theorem 4.5, where $M$ is an
 %{arbitrary, compact or non-compact,} Riemann surface.}
\end{enumerate}
\end{remark}

%%%%%%%%%%%%%%%%%%%%

 Recall, that we assume that all harmonic maps in this paper are considered to be full,
 {see Definition \ref{deffull}.} In the following proposition we will use, as in the definition just above, by abuse of notation the same notation for the maps $C $ and $F_-$
and their projections respectively to some quotient space.}

\begin{proposition} \label{prop-frame}
  Let   {$M \neq S^2$}   be any Riemann surface and let $\mathcal{F}: M\rightarrow G/K$ be a harmonic map which is full in $G/K$.  Then the following statements are equivalent

   \begin{enumerate}
\item The  monodromy matrices $\chi(g,\lambda)$, $g\in\pi_1(M)$, $\lambda\in S^1$, satisfy $\chi(g,\lambda)=e,\ \hbox{ for all } g\in\pi_1(M),\ \lambda\in S^1$.

\item Any extended frame $F$ of $\mathcal{F}$ is defined
on $M$ ``modulo $K$'', i.e. $F$ descends from a map defined on $\tilde{M}$ with values in
${\Lambda G}_{\sigma}$ to
$F(z, \bar{z},\lambda ): M \rightarrow  ({\Lambda G}_{\sigma}) / K$.

\item Any holomorphic extended frame $C$ of $\mathcal{F}$ is defined on $M$
``modulo $\Lambda^+G^{\mathbb{C}}_{\sigma}$'' , i.e. $C$ descends from a map defined on $\tilde{M}$ with values in   $\Lambda G^{\C}_{\sigma}$ to
$C(z,\lambda):M\rightarrow \Lambda G^{\C}_{\sigma}/\Lambda^+ G^{\C}_{\sigma}$.

 \item The normalized   extended frame $F_-$ relative to any given base point $z_0$ is defined on $M$, i.e. $F_-$ descends from a map defined on $\tilde{M}$ with values in $\Lambda^- G^{\C}_{\sigma}$ to  $F_-(z,\lambda):M\rightarrow \Lambda^- G^{\C}_{\sigma}$.
 \end{enumerate}
 % {For $M = S^2$ the four statements just above still hold in view of  {(2) of Remark \ref{sphere} of Section 2,} if one admits up to two singularities {in the extended frames.}}
 \end{proposition}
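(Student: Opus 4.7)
My plan is to prove the equivalence of $(1)$ with each of $(2)$, $(3)$, $(4)$ by a single mechanism: compare the intrinsic transformation law from Lemma \ref{frametransform} with the descent condition in question, and evaluate at the base point $z_0$. Since any deck transformation $g \in \pi_1(M)$ fixes $\tilde{\mathcal F}$ pointwise, the associated automorphism $R$ in Lemma \ref{frametransform} is the identity, and one has
\[
F(g.z, \overline{g.z}, \lambda) = \chi(g,\lambda)\, F(z,\bar z,\lambda)\, k(g,z,\bar z),
\]
with $k$ a $K$-valued, $\lambda$-independent function and the crucial normalization $\chi(g,\lambda)|_{\lambda = 1} = e$.

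For $(1) \Leftrightarrow (2)$, the forward implication is read off directly, while conversely descent of $F$ modulo $K$ forces $F(g.z,\overline{g.z},\lambda) = F(z,\bar z,\lambda)\, h(g,z,\bar z)$ with $h$ a constant loop in $K$. Comparing this with the transformation law gives $F^{-1}\chi F \in K$, and specializing to $z = z_0$ (where $F = e$) shows that $\chi(g,\cdot)$ itself is a constant loop in $K$; the normalization $\chi(g,1)=e$ then forces $\chi \equiv e$. For $(1) \Leftrightarrow (4)$ I would Birkhoff-decompose $F = F_- F_+$: if $\chi \equiv e$, uniqueness of the $\Lambda_*^- \cdot \Lambda^+$ factorization applied to $F_-(z,\lambda)\,(F_+(z,\bar z,\lambda)\, k)$ gives $F_-(g.z,\lambda) = F_-(z,\lambda)$; conversely, if $F_-$ is $\pi_1(M)$-invariant, then $F(z,\bar z,\lambda)^{-1} F(g.z,\overline{g.z},\lambda) = F_+(z,\bar z,\lambda)^{-1} F_+(g.z,\overline{g.z},\lambda)$ lies simultaneously in $\Lambda G_\sigma$ and $\Lambda^+ G^\C_\sigma$, hence is a constant $K$-valued loop, reducing matters to $(2)$. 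The equivalence $(1) \Leftrightarrow (3)$ is entirely parallel, using $C = F V_+$: since $V_+^{-1} k V_+ \in \Lambda^+ G^\C_\sigma$, the transformation law rewrites as $C(g.z,\lambda) = \chi(g,\lambda) C(z,\lambda) W_+$ with $W_+ \in \Lambda^+ G^\C_\sigma$, and the same $z_0$-evaluation together with $\Lambda G_\sigma \cap \Lambda^+ G^\C_\sigma = K$ finishes the argument.

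The main obstacle is the repeated appeal to the identity $\Lambda G_\sigma \cap \Lambda^+ G^\C_\sigma = K$, interpreted as constant loops with values in $K$. This is essentially a Liouville-type consequence of the uniqueness in Theorem \ref{thm-decomposition}: a loop in $\Lambda G_\sigma$ that extends holomorphically across $|\lambda| \le 1$ can be reflected through the unit circle via $\tau$ to a holomorphic extension on $|\lambda| \ge 1$, yielding a bounded holomorphic map on all of $\hat{\C}$ with value in $G \cap G^\sigma = K$. This identity is exactly what lets the evaluation-at-$z_0$ trick convert ``$\chi$ lies in $K$'' (respectively in $\Lambda^+ G^\C_\sigma$) into ``$\chi$ is $\lambda$-independent,'' which together with $\chi(g,1)=e$ collapses $\chi$ to the identity. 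A minor subtlety is that the fullness assumption is what makes the monodromy loop $\chi(g,\lambda)$ unambiguously attached to $g$, so that $(1)$ is even well posed; the hypothesis $M \neq S^2$ removes the singularity issues discussed in Remark \ref{sphere}.
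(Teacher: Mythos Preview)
Your argument is correct and close in spirit to the paper's, but the mechanism for the converse implications is genuinely different in one place. In the paper's proof of $(2)\Rightarrow(1)$, the equation $\chi(g,\lambda)\,F\,k = F\,\tilde k$ is projected to $G/K$ to obtain $\chi(g,\lambda)\,\mathcal{F}_\lambda = \mathcal{F}_\lambda$ for all $z$ and all $\lambda$, and then \emph{fullness} is invoked to kill $\chi$. You instead evaluate the same identity at $z_0$ to force $\chi(g,\cdot)\in K$ (hence $\lambda$-independent) and then use the normalization $\chi(g,1)=e$ from Lemma~\ref{frametransform}. Your route is slightly more self-contained: it only needs fullness to make $\chi$ well defined (so that $\chi(g,1)=R=e$ is unambiguous), whereas the paper's argument tacitly uses fullness of $\mathcal{F}_\lambda$ for every $\lambda\in S^1$. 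For the remaining implications the two proofs are essentially the same: the paper runs the cycle $(3)\Rightarrow(4)\Rightarrow(2)$ via Birkhoff uniqueness and the reality of $F$, while you do $(4)\Rightarrow(2)$ identically and replace $(3)\Rightarrow(4)$ by a direct $(3)\Rightarrow(1)$ using the same $z_0$-evaluation together with $\Lambda G_\sigma \cap \Lambda^+ G^{\mathbb C}_\sigma = K$.
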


\begin{proof}
Let's consider any Riemann surface $M$ different from $S^2$.
By general theory we have for every $g\in\pi_1(M)$ and all $\lambda\in S^1$
on the universal cover $\tilde{M}$ the equations:
\[\begin{split}
F(g.z,\overline{g.z},\lambda)&= \chi(g,\lambda) F(z,\bar{z},\lambda) k(g,z,\bar{z}),\\
C(g.z,\lambda)&=\chi(g,\lambda) C(z,\lambda) W_+(g,z,\bar{z},\lambda),\\
F_-(g.z,\lambda)&=\chi(g,\lambda) F_-(z,\lambda) L_+(g,z,\bar{z},\lambda),\\
  \end{split}\]
for some maps
$k(g,z,\bar{z}): \tilde{M} \rightarrow K $,
$ W_+ (g,z,\bar{z},\lambda),
\ L_+(g,z,\bar{z},\lambda): \tilde{M}\rightarrow \Lambda^+ G^{\C}_{\sigma}$, and $\chi(g,\lambda)\in\Lambda G_{\sigma}$. Therefore (1) implies (2), (3) and (4).

``(2) $\Rightarrow$ (1)": From the assumption we obtain $F(g.z,\overline{g.z},\lambda)= F(z,\bar{z},\lambda)\tilde{k}(g,z,\bar{z})$ for some $\tilde{k} : \tilde{M} \rightarrow K$. This shows \[\chi(g,\lambda)F(z,\bar{z},\lambda)k(g,z,\bar{z})=F(z,\bar{z},\lambda)\tilde{k}(g, z,\bar{z}).\]
Since $\mathcal{F}\equiv F\mod K$, this equation implies
\[\mathcal{F}(z,\bar{z},\lambda)=\chi(g,\lambda)\mathcal{F}(z,\bar{z},\lambda), \hbox{ for all } z\in M,\ \lambda\in S^1.\]
Since  $\mathcal{F}$ is full, $\chi(g,\lambda)=e$ follows.

``(4) $\Rightarrow$ (2)": If $F_-$ is defined on $M$, then  $F_-(g.z,\lambda)=F_-(z,\lambda)$ for all $g\in \pi_1(M)$, $\lambda\in S^1.$ But then $F=F_-F_+$ satisfies
\[\begin{split}
F(g.z,\overline{g.z},\lambda)&=F_- (g.z,\lambda) F_+(g.z,\overline{g.z},\lambda)\\
&=F_- (z,\lambda) F_+(g.z,\overline{g.z},\lambda)\\
&=F(z,\bar{z},\lambda)F_+(z,\bar{z},\lambda)^{-1}F_+(g.z,\overline{g.z},\lambda).\\
\end{split}\]
The reality of $F(g.z,\overline{g.z},\lambda)$ and $F(z,\bar{z},\lambda)$ yields $F_+(z,\bar{z},\lambda)^{-1}F_+(g.z,\overline{g.z},\lambda)=k(g, z,\bar{z})$. Hence (2) follows.

``(3) $\Rightarrow$ (4)": Assume $C$ is a holomorphic extended frame defined on $M$. Then  $C(g.z,\lambda)=C(z,\lambda)B_+(g,z,\bar{z},\lambda)$
by assumption and $F_-=CS_+$ implies
\[F_-(g.z,\lambda)=C(z,\lambda)B_+(g,z,\bar{z},\lambda)S_+(g.z,\overline{g.z},\lambda)=F_-(z,\lambda)  T_+(g,z,\bar{z},\lambda).\]
But since $F_-=e+O(\lambda^{-1})$, this Birkhoff decomposition is unique and $T_+(g,z,\bar{z},\lambda)=e$ follows.
\end{proof}

\begin{corollary}
 { Let  $M $   be any Riemann surface and $G/K$ any inner semi simple  symmetric space and let $\mathcal{F}: M\rightarrow G/K$ be any totally symmetric harmonic map  in $G/K$.  Then the normalized extended frame  $F_-$,  is fixed under the action  of $\pi_1(M)$.
 In particular,  we have $F_- (g.z, \lambda) =   F_- (z, \lambda)$ for all $z \in \tilde{M}$ and $g \in \pi_1(M)$.
 As a consequence, the normalized potential  {$\eta_- = F_-(z,\lambda)^{-1} \dd F_-(z,\lambda)$,}  is fixed under the action  of each $g \in \pi_1(M)$.
}
\end{corollary}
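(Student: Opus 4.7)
The plan is to derive the corollary as an essentially immediate consequence of the equivalence $(1) \Leftrightarrow (4)$ of Proposition \ref{prop-frame}, sharpened by the uniqueness of the Birkhoff decomposition. First, in the case $M = S^2$ there is nothing to prove, since $\pi_1(S^2)$ is trivial; thus I restrict attention to $M \neq S^2$, where Proposition \ref{prop-frame} is directly applicable.

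By Definition \ref{def-uni}, the hypothesis that $\mathcal{F}$ is totally symmetric means that there exists an extended frame $F(z,\bar z,\lambda)$ whose monodromy representation $\chi$ is trivial, i.e.\ $\chi(g,\lambda) = e$ for all $g \in \pi_1(M)$ and all $\lambda \in S^1$ (and hence, by holomorphic extension, for all $\lambda \in \mathbb{C}^*$). Inserting this into the general transformation identity recorded at the beginning of the proof of Proposition \ref{prop-frame},
\[
F_-(g.z,\lambda) \;=\; \chi(g,\lambda)\, F_-(z,\lambda)\, L_+(g,z,\bar z,\lambda),
\]
one obtains $F_-(g.z,\lambda) = F_-(z,\lambda)\, L_+(g,z,\bar z,\lambda)$ with $L_+ \in \Lambda^+ G^{\mathbb{C}}_\sigma$. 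Since $F_-(g.z,\lambda)$ itself already belongs to $\Lambda^-_* G^{\mathbb{C}}_\sigma$, the trivial factorization $F_-(g.z,\lambda)\cdot e$ is a valid $\Lambda^-_* \cdot \Lambda^+_\FC$-Birkhoff decomposition of the same loop. By the uniqueness of this decomposition on the big cell, Theorem \ref{thm-decomposition}$(2)(b)$, I conclude $L_+(g,z,\bar z,\lambda) = e$ and hence
\[
F_-(g.z,\lambda) = F_-(z,\lambda) \qquad \text{for all } g \in \pi_1(M),\ z \in \tilde M,
\]
which is exactly the claimed $\pi_1(M)$-invariance of the normalized extended frame.

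The invariance of the normalized potential $\eta_- = F_-^{-1}\dd F_-$ then follows by pullback: for any $g \in \pi_1(M)$, viewed as a deck transformation of $\tilde M$, one computes $g^* \eta_- = (F_- \circ g)^{-1} \dd(F_- \circ g) = F_-^{-1} \dd F_- = \eta_-$. I do not expect any serious obstacle: the argument is essentially a restatement of the equivalence $(1)\Leftrightarrow (4)$ in Proposition \ref{prop-frame}, and the only subtle point is the uniqueness of the Birkhoff decomposition, which is already secured by the normalization $F_- = e + \mathcal{O}(\lambda^{-1})$ at $\lambda = \infty$ built into the notion of a normalized extended frame.
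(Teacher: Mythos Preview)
Your proof is correct and follows essentially the same route as the paper: the corollary is stated immediately after Proposition~\ref{prop-frame} without separate proof, since it is just the implication $(1)\Rightarrow(4)$ of that proposition (together with the obvious pullback computation for $\eta_-$). You have simply made explicit the Birkhoff uniqueness step that the paper leaves implicit in its sentence ``Therefore (1) implies (2), (3) and (4)''.
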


If we assume $M$ to be non-compact then we obtain the following stronger result:
\begin{theorem}\label{thm-invariant-potential} We retain the notation and the assumptions stated throughout
this paper, but we assume in addition that $M$ is non-compact.

Then  for any harmonic map     $\mathcal{F}:M\rightarrow G/K$ into the
inner symmetric space $G/K$, compact or non-compact,  the following statements are equivalent:
 \begin{enumerate}
\item $\mathcal{F}_{\lambda}: \tilde{M} \rightarrow G/K$ has trivial monodromy for all $\lambda \in S^1$.
\item There exists some extended frame $F$ which satisfies
\[F(g.z,\overline{g.z},\lambda)=F(z,\bar{z},\lambda) k(g, z, \bar z)~~ \hbox{ for all }~~\lambda\in S^1,\ g\in \pi_1(M)
\hbox{ and some } k(g, z, \bar z) \in K.\]
\item There exists a holomorphic extended frame $C$ for the harmonic map $\mathcal{F}$ which satisfies
\[C(g.z,\lambda)=C(z,\lambda)\ \hbox{ for all }\  \lambda\in S^1,\ g\in\pi_1(M).\]
\item The integrated normalized potential $F_-$ of the harmonic map $ \mathcal{F}:M\rightarrow G/K$ satisfies
\[F_-(g.z,\lambda)=F_-(z,\lambda)\ \hbox{ for all }\  \lambda\in S^1,\ g\in\pi_1(M).\]
 \end{enumerate}
 \end{theorem}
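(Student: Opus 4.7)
The plan is to observe that the equivalences $(1) \Leftrightarrow (2) \Leftrightarrow (4)$ are already essentially contained in Proposition \ref{prop-frame}, so that the only genuinely new content is the upgrade of condition (3) from the ``modulo $\Lambda^+G^{\C}_\sigma$'' invariance of Proposition \ref{prop-frame} to genuine invariance $C(g.z,\lambda) = C(z,\lambda)$. This upgrade is precisely the splitting of a $\Lambda^+G^{\C}_\sigma$-valued holomorphic cocycle, which is exactly what Theorem \ref{thm-monodr}(3) provides under the non-compactness assumption on $M$.

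First I would handle $(1) \Leftrightarrow (2) \Leftrightarrow (4)$. The implication $(1) \Leftrightarrow (2)$ is immediate from Lemma \ref{frametransform}: the monodromy loop $\chi(g,\lambda)$ acts trivially on $\mathcal{F}_\lambda$ for every $g \in \pi_1(M)$ and every $\lambda \in S^1$ iff $\chi(g,\lambda) \equiv e$ (using fullness of $\mathcal{F}$ and holomorphic dependence of $\chi$ on $\lambda$), which translates directly into (2). The equivalence $(2) \Leftrightarrow (4)$ is the verbatim argument from ``(2) $\Rightarrow$ (4)'' and ``(4) $\Rightarrow$ (2)'' of Proposition \ref{prop-frame}: the unique meromorphic Birkhoff decomposition $F = F_- F_+$ on $\tilde{M}$, together with the reality of $F$, transports invariance of $F$ modulo $K$ on the left to invariance of $F_-$ on the right.

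For $(3) \Rightarrow (4)$ I would use the meromorphic Birkhoff decomposition $C = F_- B_+$ on $\tilde{M}$, with $F_- = e + O(\lambda^{-1})$. Invariance of $C$ under $\pi_1(M)$ gives
\[ F_-(g.z,\lambda) \cdot B_+(g.z,\lambda) = F_-(z,\lambda) \cdot B_+(z,\lambda), \]
hence $F_-(g.z,\lambda) = F_-(z,\lambda) \cdot \bigl(B_+(z,\lambda) B_+(g.z,\lambda)^{-1}\bigr)$, and uniqueness of the normalized meromorphic Birkhoff factorization forces the $\Lambda^+$-factor to be trivial.

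The heart of the proof is the remaining implication $(2) \Rightarrow (3)$, and this is where non-compactness enters. Starting from an extended frame $F$ with values in $\Lambda G_\sigma$ that satisfies (2), Theorem \ref{thm-CC} yields $\tilde{V}_+$ such that $C = F \tilde{V}_+$ is holomorphic in $z \in \tilde{M}$ and $\lambda \in \C^*$. Because $F$ transforms under $\pi_1(M)$ only by an element of $K$, the induced cocycle $W_+(g,z,\lambda)$ defined by $C(g.z,\lambda) = C(z,\lambda) W_+(g,z,\lambda)$ takes values in $\Lambda^+G^{\C}_\sigma$ and satisfies the multiplicative cocycle identity. Since $M$ is non-compact, $H^2(M,\mathbb{Z})=0$, and the Bungart--R\"ohrl machinery invoked inside the proof of Theorem \ref{thm-monodr}(3) yields $H^1(M, (\Lambda^+G^{\C}_\sigma)^{\mathcal{H}}) = 0$. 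This produces a holomorphic $P_+ : \tilde{M} \rightarrow \Lambda^+G^{\C}_\sigma$ with $W_+(g,z,\lambda) = P_+(z,\lambda) P_+(g.z,\lambda)^{-1}$, so that $\hat{C} := C P_+$ is the required $\pi_1(M)$-invariant holomorphic extended frame. The main obstacle is precisely this cohomological splitting: it is the step that fundamentally requires $M$ to be non-compact, and it is what separates Theorem \ref{thm-invariant-potential} from the weaker Proposition \ref{prop-frame}. Everything else is bookkeeping with Birkhoff uniqueness and fullness.
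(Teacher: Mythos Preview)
Your proposal is correct and follows essentially the same route as the paper. The paper quotes the conclusion of Theorem~\ref{thm-monodr}(3) (the existence of a holomorphic extended frame transforming as $C(g.z,\lambda)=\chi(g,\lambda)C(z,\lambda)$) as a black box and then reads off $(1)\Leftrightarrow(3)$, whereas you unpack that reference and redo the Bungart--R\"ohrl cocycle splitting explicitly inside the implication $(2)\Rightarrow(3)$; the remaining equivalences are handled identically via Proposition~\ref{prop-frame}.
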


\begin{proof}
From Proposition \ref{prop-frame} we know that $(1)$ and $(4)$ are equivalent.

 {
By the proof of part $(1)$ of Theorem \ref{thm-monodr} we know that in general there exists some holomorphic extended frame satisfying  $C(g.z,\lambda)=M(g,\lambda)C(z,\lambda)$ for all $g \in \pi_1(M).$
Therefore,  by using such a $C(z,\lambda)$ in $(3)$  of
Proposition \ref{prop-frame} we observe that the monodromy is trivial if and only if
$C(z,\lambda)$ is invariant under $\pi_1(M).$ Finally, $(2)$ is equivalent to  $(2)$ of
Proposition \ref{prop-frame}.}
\end{proof}

\begin{remark}
It is not known (except for some examples)  for which $M$ and which $G/K$ the factor $k$ in $(2)$ can be removed.
\end{remark}

  For   Willmore surfaces we consider the inner symmetric space
 $SO^+(1,n+3)/SO^+(1,3)\times SO(n)$. In this case the statements of
  Theorem \ref{thm-invariant-potential} hold, of course, for $M$ non-compact.

 \begin{theorem}
 If the inner symmetric space is $SO^+(1,n+3)/SO^+(1,3)\times SO(n),$ and if $M$ is compact and different from $S^2$, then the statements of Theorem \ref{thm-invariant-potential} hold if in item $(3)$ the notion ``holomorphic" is replaced by
 ``meromorphic".
 \end{theorem}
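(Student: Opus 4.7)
The plan is to follow the structure of the proof of Theorem \ref{thm-invariant-potential}, substituting the input of Theorem \ref{thm-monodr}$(2)$ (which is precisely what is available in the present setting of a compact $M\neq S^2$ mapping into $SO^+(1,n+3)/SO^+(1,3)\times SO(n)$) for the input of Theorem \ref{thm-monodr}$(3)$ that was used in the non-compact case. The essential new feature is that Theorem \ref{thm-monodr}$(2)$ only guarantees an invariant \emph{meromorphic} potential on $M$, not a holomorphic one, and this is exactly the reason for the weakening in item $(3)$.

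First I would observe that the equivalences $(1)\Leftrightarrow(2)\Leftrightarrow(4)$ of Theorem \ref{thm-invariant-potential} are already contained in Proposition \ref{prop-frame}, which holds for any Riemann surface $M\neq S^2$ and whose formulation does not involve holomorphicity of any potential. Hence only the equivalence of these three conditions with the modified item $(3)$ requires new argument.

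For the direction $(1)\Rightarrow(3)$ I would apply Theorem \ref{thm-monodr}$(2)$ to $\mathcal{F}\colon M\to SO^+(1,n+3)/SO^+(1,3)\times SO(n)$, obtaining a meromorphic, $\pi_1(M)$-invariant potential $\eta$ on $\tilde M$. Solving $C^{-1}\dd C=\eta$ with $C(z_0,\lambda)=e$ produces a meromorphic extended frame on $\tilde M$ satisfying a transformation rule $C(g.z,\lambda)=\chi(g,\lambda)C(z,\lambda)$ for some loop matrix $\chi(g,\lambda)\in\Lambda G_\sigma$. Under hypothesis $(1)$ this monodromy is trivial, so $C(g.z,\lambda)=C(z,\lambda)$ and $C$ descends to a meromorphic extended frame on $M$. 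For the converse $(3)\Rightarrow(4)$ I would mimic the argument ``$(3)\Rightarrow(4)$'' of Proposition \ref{prop-frame}: the meromorphic Birkhoff factorization $F_-=CS_+$ with $F_-=e+\mathcal{O}(\lambda^{-1})$, together with the assumed invariance of $C$, forces $F_-(g.z,\lambda)=F_-(z,\lambda)$ by uniqueness of the decomposition, and then the equivalence $(4)\Leftrightarrow(1)$ of Proposition \ref{prop-frame} closes the loop.

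The main obstacle I foresee is the careful handling of meromorphicity: unlike the non-compact case, the frame $C$ and the potential $\eta$ are only defined on the complement of a discrete set of poles in $\tilde M$, and one must verify that the Birkhoff factorization argument of Proposition \ref{prop-frame} extends smoothly across this set. Since each step of that proposition is essentially local and pointwise in $z$, this extension should be routine bookkeeping. The genuine analytical input restricting the present theorem to the specific symmetric space $SO^+(1,n+3)/SO^+(1,3)\times SO(n)$ is Theorem \ref{thm-monodr}$(2)$, for which no analog is presently known for compact Riemann surfaces of positive genus with a general inner symmetric space as target.
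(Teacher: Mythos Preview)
Your proposal is correct and follows essentially the same route as the paper, which is extremely terse: it simply invokes Theorem \ref{thm-monodr}(2) to obtain an invariant meromorphic potential and then says ``the claim follows (by following the argument of Theorem \ref{thm-monodr})''. You have correctly unpacked this, noting that $(1)\Leftrightarrow(2)\Leftrightarrow(4)$ already comes from Proposition \ref{prop-frame} and that only the modified item $(3)$ needs the meromorphic invariant potential as new input. One small point worth tightening: when you solve $C^{-1}\dd C=\eta$ with $C(z_0,\lambda)=e$, the resulting left monodromy of $C$ is a priori only \emph{conjugate} to the monodromy $\chi(g,\lambda)$ of $F$ (by the value at $z_0$ of the frame produced in the cocycle-splitting construction of Theorem \ref{thm-monodr}), not literally equal to it; but since hypothesis $(1)$ asserts $\chi\equiv e$, the conjugate is also trivial and your conclusion stands.
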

 \begin{proof}
 We only need to consider the case of $M$ compact and different from $S^2$. In this case
 Theorem \ref{thm-monodr} shows that there exists an invariant meromorphic potential for any
 harmonic map defined on $M$. Since we assume that the harmonic map is totally symmetric, the claim follows
 (by following the argument of Theorem \ref{thm-monodr} ).
 \end{proof}

The case of $M = S^2$ can be phrased like the case for all other compact Riemann surfaces, but the usual provisos
(concerning singular point of the extended frames) need to be admitted.

We close this subsection by an application of the Duality Theorem in \cite{DoWa13}. For the case $M = S^2$,  {we can perform the Theorem   to  $S^2 \setminus{ \{p_1\}}$ and $ S^2 \setminus{ \{p_2\}} $ respectively, in the same way as done in \cite[Section 3.2]{DoWa12}.}

\begin{theorem} \label{duality: totally symmetric}
We retain the general assumptions of this paper.
Let $G/K$ be an inner symmetric space of non-compact type and  $\tilde{U}/(\tilde{U}\cap \tilde{K}^{\C})$
its compact dual. Then we obtain for a ``dual pair"
$\mathcal{F} : M \rightarrow G/K$  and
$\tilde{\mathcal{F}}_U : \tilde{U} \rightarrow   \tilde{U}/(\tilde{U}\cap \tilde{K}^{\C})$
of harmonic maps as constructed in the Appendix:
\[\hbox{$\mathcal{F}$   is totally symmetric  $\Longleftrightarrow$  $\tilde{\mathcal{F}} $ is totally symmetric.
}\]
\end{theorem}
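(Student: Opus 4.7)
The plan is to show that both members of a dual pair share \emph{the same} normalized extended frame $F_-$, and then to read off the equivalence from the intrinsic characterization of ``totally symmetric" given by Proposition \ref{prop-frame}(4). In this way both statements ``$\mathcal{F}$ is totally symmetric" and ``$\tilde{\mathcal{F}}_U$ is totally symmetric" get reduced to literally the same $\pi_1(M)$-invariance condition on a shared object, making the equivalence tautological once the shared-$F_-$ step is in place.

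First I would invoke the construction in the Appendix, together with the Iwasawa-type identity already exploited in the proof of Theorem \ref{duality:algebraic},
\[
F(z,\bar z,\lambda) \;=\; F_{\tilde U}(z,\bar z,\lambda)\, S_+(z,\bar z,\lambda),
\qquad S_+\in \Lambda^{+} G^{\mathbb{C}}_{\sigma}.
\]
Performing the (meromorphic) Birkhoff decompositions $F = F_- F_+$ with $F_-\in \Lambda^{-}_{*} G^{\mathbb{C}}_{\sigma}$, and analogously $F_{\tilde U} = F_{\tilde U,-}\, F_{\tilde U,+}$, substitution yields
\[
F_-\, F_+ \;=\; F_{\tilde U,-}\,\bigl(F_{\tilde U,+}\, S_+\bigr),
\]
and uniqueness of the Birkhoff splitting forces $F_- = F_{\tilde U,-}$. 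Hence $\mathcal{F}$ and $\tilde{\mathcal{F}}_U$ possess the same normalized extended frame, and in particular the same normalized potential $\eta$.

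Now assume first $M \neq S^2$. By Proposition \ref{prop-frame}(4), each of $\mathcal{F}$ and $\tilde{\mathcal{F}}_U$ is totally symmetric if and only if $F_-(g.z,\lambda) = F_-(z,\lambda)$ for every $g \in \pi_1(M)$ and every $\lambda\in S^1$. By the previous paragraph this is literally one and the same condition, which gives the desired equivalence. The step that requires care, and which I expect to be the main obstacle, is checking that the fullness assumption needed to apply Proposition \ref{prop-frame} transfers across the dual pair; I expect this to follow directly from the Appendix construction (since the two extended frames are linked by a $\Lambda^{+} G^{\mathbb{C}}_{\sigma}$-factor and share the same meromorphic $F_-$), but it should be verified explicitly.

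Finally, for $M = S^2$ I would proceed exactly along the lines already invoked in the statement: apply the case just established to the two punctured spheres $S^2\setminus\{p_1\}$ and $S^2\setminus\{p_2\}$, each of which is non-compact and contractible, and then reassemble the (trivially invariant) data on $S^2$ as in \cite[Section 3.2]{DoWa12}. The subtle point in the $S^2$ case will be to choose the exceptional singular points of the two frames $F$ and $F_{\tilde U}$ compatibly, so that the shared-$F_-$ identity above continues to hold on each punctured sphere and the two local equivalences patch together.
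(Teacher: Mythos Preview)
Your argument is correct but proceeds along a different route than the paper. The paper works directly with the transformation behaviour of the two extended frames under $\pi_1(M)$: starting from the Iwasawa relation $F=F_{\tilde U}S_+$ and the assumption $g^*F=Fk$, one obtains $g^*F_{\tilde U}=F_{\tilde U}L_+$ with $L_+\in\Lambda^+G^{\mathbb C}_\sigma$; since both $F_{\tilde U}$ and $g^*F_{\tilde U}$ lie in the compact real loop group $\Lambda \tilde U_\sigma$, the factor $L_+$ is forced into $\Lambda \tilde U_\sigma\cap\Lambda^+G^{\mathbb C}_\sigma$, which is the compact isotropy group, and the trivial monodromy of $F_{\tilde U}$ follows.

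You instead pass through the normalized extended frame: from $F=F_{\tilde U}S_+$ and the Birkhoff decompositions of both sides you read off $F_-=F_{\tilde U,-}$, and then invoke Proposition~\ref{prop-frame}(4) to identify ``totally symmetric'' with the $\pi_1(M)$-invariance of this common $F_-$. This is a clean reduction to a shared meromorphic object and makes the equivalence literally tautological, at the price of requiring the fullness hypothesis of Proposition~\ref{prop-frame} on both sides; your remark that this should transfer across the dual pair (since they share $F_-$ and hence the same associated families modulo $\Lambda^+$) is correct and is the only point that needs a one-line justification. The paper's approach avoids that appeal to Proposition~\ref{prop-frame} but in exchange uses, implicitly, the fact that in the compact case $\Lambda \tilde U_\sigma\cap\Lambda^+G^{\mathbb C}_\sigma$ reduces to the isotropy group. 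Your handling of the $S^2$ case matches the paper's.
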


\begin{proof}
This follows quite directly from the crucial Iwasawa decomposition formula
$$F(z,\bar{z},\lambda)=F_{\tilde{U}}(z,\bar{z},\lambda) S_+(z,\bar{z},\lambda).$$
Assume $\mathcal{F}$ is totally symmetric, then we obtain from Definition \ref{def-uni}
 \[{F}(z, \bar z, \lambda) = {F}_{\tilde{U}}(g.z, g.{\bar z}, \lambda) k(g,z, \bar z)^{-1}  =
F_{\tilde{U}}(g.z,g.\bar{z},\lambda) S_+(g.z,g.\bar{z},\lambda) k(g, z, \bar z)^{-1}.\]
This implies
\[F_{\tilde{U}}(g.z,g.\bar{z},\lambda) = F_{\tilde{U}}(z,\bar{z},\lambda) L_+(z,\bar{z},\lambda)\]
for each $g \in \pi_1(M)$. From this it follows (by definition) that $\tilde{\mathcal{F}}_U $ is totally symmetric.
The converse follows analogously.
\end{proof}

%%%%%%%%%%%%%%%
%%%%%%%%%%%%%%%%
\subsection{\bf{Dressing for Algebraic and Totally Symmetric Harmonic Maps}}

Dressing is a very useful operation which permits to construct new harmonic maps from a given one.
Before applying this to the harmonic maps in question we
recall the definition of ``dressing" (e.g. \cite{DPW}, see also \cite{Gu-Oh}, \cite{BP}, \cite{Do-Ha3}, \cite{Do-Ha5}, \cite{TU1}).

We retain the notation and the  general assumptions made for this paper.
The symmetric space $G/K$  is again assumed to be compact, or non-compact, but the Riemann
surface $M$ is assumed to be simply connected.

Let $\tilde{\mathcal{F}}:\tilde{M}\rightarrow G/K$  be a harmonic map, and  $F$  an extended frame satisfying
$F(z_0,\bar{z}_0,\lambda) = e$ for some base point $z_0$. Let $h_+\in\Lambda^+G^{\C}_{\sigma}$ and consider the Iwasawa splitting (on an open dense subset $\tilde{M}'$ of $\tilde{M}$ containing $z_0,$ on which all factors in the following equation do not have any singularities
)
\begin{equation}
\label{eq-dress1} h_+F=\hat{F}\hat{W}_+.
\end{equation}

Here we assume w.l.g. that $\hat{F}(z_0, \bar{z}_0, \lambda) = e$ holds.

Then $\hat{F}(z, \bar z,\lambda)$ defines a  family of harmonic maps on $\tilde M':$
  \begin{equation}
\label{eq-dress2}
 {\hat{\mathcal{F}}_\lambda : = h_+ \sharp {\mathcal{F}_\lambda}  : \tilde{M}' \rightarrow G/K,\
 \hspace{5mm}
\hat{\mathcal{F}}_{\lambda} ( z, \bar z) :=\hat{F} (z, \bar z,\lambda) \mod K . }
\end{equation}

\begin{lemma}
Using the notation introduced just above,  {$\hat{F}(z, \bar z,\lambda)$} is an extended frame of
the harmonic map $\hat{\mathcal{F}}: \tilde{M}' \rightarrow G/K,$
$\hat{\mathcal{F}}(z, \bar z,\lambda) = { (h_+\sharp\mathcal{F})(z, \bar z, \lambda)},$
and satisfies w.l.g.  {$\hat{F}(z_0, \bar z_0,\lambda) = e$}.
%If $M(g,\lambda)$ denotes the monodromy representation of $F(z, \bar z,\lambda)$, $g \in \pi_1(M)$, then the
%monodromy representation of $\hat{F}$ is given by
%\begin{equation}
 %\hat{M}(g,\lambda) =    h_+ M(g,\lambda) h_+^{-1}.
%\end{equation}
\end{lemma}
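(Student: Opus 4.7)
The plan is to compute the Maurer--Cartan form $\hat\alpha := \hat F^{-1}\,\mathrm{d}\hat F$ from the dressing relation \eqref{eq-dress1} and verify that it has the DPW shape
\[
\hat\alpha = \lambda^{-1}\hat\alpha_{\mathfrak p}' + \hat\alpha_{\mathfrak k} + \lambda\,\hat\alpha_{\mathfrak p}''.
\]
Once this is in place, the Pohlmeyer Lemma immediately gives that $\hat{\mathcal F}_\lambda = \hat F\mod K$ is harmonic for every $\lambda \in S^1$. Combined with $\hat F\in \Lambda G_\sigma$ (from the Iwasawa splitting) and $\hat F(z_0,\bar z_0,\lambda)=e$ (by hypothesis), this is exactly the definition of an extended frame for $\hat{\mathcal F}$.

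To compute $\hat\alpha$, I would differentiate $h_+ F = \hat F\,\hat W_+$, using that $h_+$ is independent of $z$, and rearrange to get
\[
\hat\alpha = \hat W_+\,\alpha_\lambda\,\hat W_+^{-1} - \mathrm{d}\hat W_+\,\hat W_+^{-1}.
\]
Since $\hat W_+ \in \Lambda^+ G^{\mathbb C}_\sigma$, its Fourier expansion starts at $\lambda^0$, so $\mathrm{d}\hat W_+\,\hat W_+^{-1}$ contains only non-negative powers of $\lambda$, while $\hat W_+\alpha_\lambda\hat W_+^{-1}$ contains powers $\geq -1$, the $\lambda^{-1}$-coefficient being $W_0\alpha_{\mathfrak p}'W_0^{-1}$ with $W_0 = \hat W_+|_{\lambda=0}$. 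Hence $\hat\alpha = \sum_{j\geq -1} B_j \lambda^j$.

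The decisive step is to truncate this expansion from above by reality. Since $\hat F \in \Lambda G_\sigma$, the Maurer--Cartan form $\hat\alpha$ lies in $\Lambda\mathfrak g_\sigma\otimes T^*\tilde M$, which forces $\tau B_j = B_{-j}$ for the complex conjugation $\tau$ on $\mathfrak g^{\mathbb C}$. Combined with $B_j=0$ for $j\leq -2$, this yields $B_j = 0$ for $j\geq 2$ as well, collapsing the expansion to three terms $B_{-1}\lambda^{-1}+B_0+B_1\lambda$. The $\sigma$-twisting condition then places $B_0 \in \mathfrak k^{\mathbb C}\otimes T^*\tilde M$ and $B_{\pm 1}\in \mathfrak p^{\mathbb C}\otimes T^*\tilde M$. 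Finally, only the $(1,0)$-piece $\lambda^{-1}\alpha_{\mathfrak p}'$ of $\alpha_\lambda$ contributes to $B_{-1}$, so $B_{-1}$ is of type $(1,0)$, and then $B_1 = \tau B_{-1}$ is automatically of type $(0,1)$. Setting $\hat\alpha_{\mathfrak p}' := B_{-1}$, $\hat\alpha_{\mathfrak k} := B_0$, $\hat\alpha_{\mathfrak p}'' := \lambda^{-1}B_1$ delivers the required DPW form.

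The main obstacle is precisely this truncation/identification step, which requires using \emph{simultaneously} the positivity of $\hat W_+$, the reality of $\hat F$, the twisting under $\sigma$, and the $(1,0)$-nature of the leading term of $\alpha_\lambda$; none of these ingredients alone suffices. Everything else is routine: Pohlmeyer yields the harmonicity of $\hat{\mathcal F}_\lambda$, and the normalisation $\hat F(z_0,\bar z_0,\lambda)=e$ is either already present or can be arranged by replacing $\hat F$ with $\hat F(z_0,\bar z_0,\lambda)^{-1}\hat F$, which leaves $\hat\alpha$ unchanged and alters the associated family only by a $\lambda$-dependent left translation.
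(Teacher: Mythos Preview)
Your argument is correct and is the standard way to verify that dressing produces a new extended frame. The paper itself states this lemma without proof, treating it as a well-known fact from the dressing literature (the references to \cite{DPW}, \cite{Gu-Oh}, \cite{BP}, \cite{Do-Ha3}, \cite{Do-Ha5}, \cite{TU1} are given just before the definition of dressing). So there is nothing to compare against; your proposal supplies exactly the argument the paper omits.

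Two minor points. First, in your last line you write $\hat\alpha_{\mathfrak p}'' := \lambda^{-1}B_1$; this should simply be $\hat\alpha_{\mathfrak p}'' := B_1$, since the term in the expansion is $B_1\lambda = \lambda\,\hat\alpha_{\mathfrak p}''$. Second, for the record, reality also gives $\tau B_0 = B_0$, so $B_0$ lands in $\mathfrak k$ (not merely $\mathfrak k^{\mathbb C}$), which is what one needs for $\hat\alpha|_{\lambda=1}$ to be the Maurer--Cartan form of a genuine $G$-valued frame. Neither of these affects the validity of your reasoning.
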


\begin{theorem} Let's retain the general assumptions of this paper for  $M$ and $G/K$.
\newline
$(1)$ If $M$ is simply connected and $\mathcal{F} : M \rightarrow G/K$ is an algebraic harmonic map,
then also each dressed harmonic map
$ (h_+\sharp\mathcal{F}): \tilde{M}' \rightarrow G/K,$ is algebraic.
\newline
$(2)$ If  $\mathcal{F} : M \rightarrow G/K$ is a totally symmetric  harmonic map,
then also each dressed harmonic map
$ (h_+\sharp\mathcal{F}): \tilde{M}' \rightarrow G/K,$ is totally symmetric.
\end{theorem}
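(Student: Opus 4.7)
The plan is to use the characterisations of ``algebraic'' and ``totally symmetric'' provided by the canonical factors in the Birkhoff/Iwasawa decompositions of the extended frame, and then to trace how these factors transform under the dressing $F\mapsto\hat F$ defined by (\ref{eq-dress1}). For (1), Proposition \ref{prop-fut} allows me to replace ``$\mathcal{F}$ algebraic'' by the condition that the normalized extended frame $F_-$ is a polynomial in $\lambda^{-1}$; for (2) I will work directly from the definition of trivial monodromy.

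For (1), I first identify the normalized factor of the dressed frame. Combining the Birkhoff decomposition $F=F_-F_+$ with (\ref{eq-dress1}) gives
\[
\hat F \;=\; h_+F\,\hat W_+^{-1}\;=\;h_+F_-\cdot(F_+\hat W_+^{-1}).
\]
On an open dense subset of $\tilde M'$ I Birkhoff-decompose the first factor as $h_+F_-=\check F_-V_+$ with $\check F_-\in\Lambda_*^-G^\C_\sigma$ and $V_+\in\Lambda^+G^\C_\sigma$; the composite right factor $V_+F_+\hat W_+^{-1}$ again lies in the group $\Lambda^+G^\C_\sigma$, so uniqueness of Birkhoff (using $\Lambda_*^-G^\C_\sigma\cap\Lambda^+G^\C_\sigma=\{e\}$) forces $\hat F_-=\check F_-$. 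The proof now reduces to a Fourier-support count: by algebraicity $F_-$ has Fourier support in $\{-k,\ldots,0\}$ for some $k$, while $h_+$ and $V_+^{-1}$ both lie in $\Lambda^+G^\C_\sigma$ and so have Fourier support in $\{0,1,\ldots\}$. Hence $\check F_-=h_+F_-V_+^{-1}$ has Fourier support in $\{-k,-k+1,\ldots\}$; intersected with $\{\ldots,-1,0\}$ (forced by $\check F_-\in\Lambda_*^-G^\C_\sigma$), this leaves the support in $\{-k,\ldots,0\}$, so $\check F_-$ is a polynomial in $\lambda^{-1}$ of degree at most $k$, and Proposition \ref{prop-fut} yields (1).

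For (2), pick an extended frame $F$ of $\mathcal{F}$ with trivial monodromy, so $F(g.z,\overline{g.z},\lambda)=F(z,\bar z,\lambda)\,k(g,z,\bar z)$ with $k\in K$ for every $g\in\pi_1(M)$. Substituting this into (\ref{eq-dress1}) evaluated at $g.z$ and comparing with (\ref{eq-dress1}) at $z$ gives
\[
\hat F(g.z,\overline{g.z},\lambda)\,\hat W_+(g.z,\overline{g.z},\lambda)\;=\;\hat F(z,\bar z,\lambda)\,\hat W_+(z,\bar z,\lambda)\,k(g,z,\bar z).
\]
Because $k$ is $\lambda$-independent, $\sigma$-fixed and $K$-valued, conjugation by $k$ preserves $\Lambda^+_{\mathcal C}G^\C_\sigma$ (it preserves the twisting, keeps the Fourier expansion non-negative, and carries $(K^\C)^0$ to itself). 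Rewriting the right hand side as $(\hat F(z,\bar z,\lambda)\,k)\cdot(k^{-1}\hat W_+(z,\bar z,\lambda)\,k)$, its first factor lies in $\Lambda G_\sigma$ and its second in $\Lambda^+_{\mathcal C}G^\C_\sigma$. Uniqueness of the Iwasawa decomposition on the open cell $\mathcal I_e$ then forces $\hat F(g.z,\overline{g.z},\lambda)=\hat F(z,\bar z,\lambda)\,k(g,z,\bar z)$, which is trivial monodromy of $\hat{\mathcal F}$.

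The main technical point I anticipate is the Fourier-support step in (1): one must be sure that for the relevant loop groups the minimum Fourier exponent of a product is at least the sum of the minimum Fourier exponents of the factors. This is harmless here because $F_-$ is already a Laurent polynomial, so $h_+F_-V_+^{-1}$ is a genuinely convergent Laurent series whose minimum exponent is bounded below by $-k$; the remaining arguments are uniqueness of the Birkhoff and Iwasawa decompositions on their open cells, together with the standard observation that $K$ normalises $\Lambda^+_{\mathcal C}G^\C_\sigma$ by conjugation.
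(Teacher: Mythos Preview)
Your proof is correct. For part~(2) your argument coincides with the paper's: both apply $g\in\pi_1(M)$ to the dressing identity $h_+F=\hat F\hat W_+$, use the trivial monodromy of $F$, and then invoke Iwasawa uniqueness (up to the $K$-ambiguity, which is harmless for the conclusion). The paper also observes along the way that $\pi_1(M)\cdot\tilde M'=\tilde M'$; your computation in fact shows this, since $h_+F(g.z)=h_+F(z)k\in\mathcal I_e$.

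For part~(1) you take a genuinely different route. The paper argues directly from \eqref{eq-dress1}: writing $\hat F=h_+F\hat W_+^{-1}$, the right-hand side has only finitely many negative powers of $\lambda$ because $h_+$ and $\hat W_+^{-1}$ lie in $\Lambda^+G^\C_\sigma$ and $F$ is a Laurent polynomial; reality of $\hat F$ then bounds the positive powers as well. Your argument instead passes through the normalized extended frame: you Birkhoff-decompose $h_+F_-=\check F_-V_+$, identify $\hat F_-=\check F_-$, and bound its Fourier support. Both approaches rely on the same elementary Fourier-support bookkeeping; the paper's is shorter because it avoids the auxiliary Birkhoff splitting of $h_+F_-$ and works with $F$ itself, while yours has the mild advantage of explicitly tracking the normalized factor $\hat F_-$ of the dressed map and giving the same bound $k$ on its degree.
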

\begin{proof}
We first consider the case $M \neq S^2$.
The first claim follows, since equation (\ref{eq-dress1}) implies that $F$ contains only finitely many negative
powers of $\lambda$ if and only if $\hat{F}$ does.
To verify the second claim we consider some point $z' \in M'$ and denote by $V$ an open  neighbourhood
of $z'$ in $M'$. Let $p \in V$ and $g \in \pi_1(M)$. Then we all factors in the dressing equation
$ h_+F=\hat{F}\hat{W}_+$  are smooth near $p$. Moreover, applying $g$ we obtain
$(h_+ \chi h_*^{-1}) \hat{F} W_+ =  (h_+ \chi h_*^{-1}) h_+ F = g^* \hat{F} g^* \hat(W)_+$ on an open dense subset
about $p$. Since the given harmonic map is totally trivial, the monodromy matrix $\chi(\lambda) $ is trial, whence
$ \hat{F} W_+ =  g^* \hat{F} g^* {\hat(W)_+}$. This implies $\pi_1(M).M' = M'$ and that $ g^* \hat{F} =   \hat{F} k$ for some map $k$ into $K$. Hence the claim.

{For the case $M = S^2$,  we perform the above results on $M_1=S^2 \setminus{ \{p_1\}}$ and $ M_2=S^2 \setminus{ \{p_2\}} $ respectively, in the same way as done in \cite[Section 3.2]{DoWa12}. Note that to get $ (h_+\sharp\mathcal{F})$ well-defined on $S^2$, some additional equation need to be satisfied.}

\end{proof}

%\begin{corollary} \label{Cor1}
%If $\mathcal{F}_\lambda$ descends  for some $\lambda_0 \in S^1$ to a harmonic map $\mathcal{F}_{\lambda_0}:M\rightarrow G/K$, i.e. if  {$M(g,\lambda_0) = e$} for all $g \in \pi_1(M)$,
%then the corresponding dressed harmonic map $h_+\sharp\mathcal{F}_{\lambda_0}$ will descend to
 %$M' = \tilde{M}' \mod \pi_1(M)$.
%\end{corollary}

%A particularly interesting feature of harmonic maps with trivial monodromy representation
 %is that  {the property of trivial monodromy representation is preserved under dressing action}.

%\begin{theorem}\label{thm-dress} If $\mathcal{F}:\tilde{M}\rightarrow G/K$ is a harmonic map with
%trivial monodromy representation, then the associated family $\mathcal{F}_{\lambda}$ consists of harmonic maps with
%trivial monodromy and all dressed harmonic maps $h_+\sharp\mathcal{F}_{\lambda}$, $h_+\in\Lambda^+G^{\C}_{\sigma}$,
%have trivial monodromy representations.
%\end{theorem}

%%%%%%%%%%%%%%%%%%%%%%%%%%%%%%%%%%%%%

%MMMMMMMMMMMMMMmmmmmmmmmmmmmmmmmmmmmm

%MMMMMMMMMMMMMMMMMMMMMMMMMMMMMMMMMMMMM

%\section{  \bf{Expressing Algebraic and Totally Symmetric  Harmonic Maps in Terms of Extended Solutions,
%a Result of Graeme Segal and a Conjecture}}

\section{\bf{Relating Extended Solutions to Extended frames}}

In \cite{BuGu}, the authors consider harmonic maps from connected compact Riemann surfaces
into connected, real, compact, semi simple Lie groups $G$ with trivial center. We will admit $G$ to be compact or non-compact. Similarly, our symmetric spaces also are admitted to be compact or non-compact.
 {We can assume w.l.g. that $G^\C$ is a semi simple simply-connected matrix Lie group and $G$ a subgroup of $G^\C$ \cite{Hoch}.}
 Harmonic maps into inner symmetric spaces are  then composed by some natural embedding so that the results about harmonic maps into Lie groups can be applied.

 We will recall this construction and describe the relation between extended solutions, used in \cite{BuGu}, and extended frames, used in the DPW approach.
While for the basic geometric expressions of DPW theory we have always used twisted loop groups (and will continue to do so), in \cite{BuGu} only untwisted loop groups have been used. So we will assume all along
 that ``DPW expressions" actually are contained in twisted loop groups, while generally expressions occurring in \cite{BuGu} will not be twisted.

%%%%%%%%%%%%%%%%%%%%%%%%%
%\subsection{Review of extended solutions}

\subsection{\bf{Relating harmonic maps into $\mathcal{F} :\rightarrow G/\hat{K}$ to modified harmonic maps
$\mathfrak{C}_h \circ \mathcal{F}: M \rightarrow G$}}

For the convenience of the reader and to fix notation we start with a simple remark.

%%%%%%%%%%%%%%%%%%%%%%%%%%%%%
%%%%%%%%%%%%%%%%%%%%%%%%%%%%%%%%%
%\subsubsection{Inner symmetric spaces and the modified Cartan embedding}

Consider the inner compact symmetric space  $G/\hat{K}$  with inner involution $\sigma$, given by $\sigma(g) = h g h^{-1}$ and with $\hat{K} = Fix^\sigma (G)$. Note that $h^2 \in Center (G) = \{e\}. $  Then with $R_h(g)= gh$ we consider
the map
\begin{equation}\label{eq-Ch}
    \begin{tikzcd}[column sep=6mm,row sep=4mm]
G/\hat{K}  \ar{r}{\mathfrak{C}}    &   G     \ar{r}{R_h} & G \\
g   \ar{r}{}  &   g \sigma(g)^{-1} = ghg^{-1} h^{-1}  \ar{r}{} & \mathfrak{C}_h:=g h g^{-1}.
\end{tikzcd}
\end{equation}
 In this way $G/\hat K$ is an isometric, totally geodesic submanifold   of $G$ \cite{BuGu}, and ${\mathfrak{C}_h}$ will be called the ``modified Cartan embedding". Note that for outer symmetric spaces the above Cartan embedding does not apply directly.

Next we want to relate extended frame for a harmonic map  $\mathcal{F} : M \rightarrow G/\hat{K}$ to the extended frame of the modified  harmonic map $\mathfrak{C}_h\circ \mathcal{F}.$ So let's consider a harmonic map  $\mathcal{F}: M \rightarrow G/\hat{K}$. By $F : \tilde{M} \rightarrow  \Lambda G_{\sigma}$ we  denote the extended frame of $\mathcal{F}$ which is normalized to $F(z_0, \bar{z}_0, \lambda) = e$ at some fixed base point $z = z_0$ for all $\lambda \in S^1$.
The extended frame of a harmonic map $\mathcal{F}$ actually is for each fixed $\lambda$ the frame of the corresponding immersion $\mathcal{F}_\lambda$ of the
associated family of $\mathcal{F}$. Obviously, the twisting condition in our case means
\begin{equation} \label{wild}
{\sigma(\gamma)(\lambda) = h \gamma(-\lambda)h^{-1} = \gamma(\lambda)}\hbox{
for all $\gamma \in \Lambda G_{\sigma}$.}
\end{equation}
Next we consider the composition of the family of harmonic maps  $\mathcal{F}_\lambda$ with the
modified Cartan embedding $\mathfrak{C}_h$.  In our setting, since
$\mathcal{F}_\lambda =F( z, \bar{z}, \lambda)\mod \hat{K}$,   this yields the
$\lambda-$dependent harmonic map $\mathfrak{F}^h_\lambda$ given by
 \begin{equation} \label{harmonicrelation}
  \mathfrak{F}^h_{\lambda}=
 F( z, \bar{z}, \lambda) h F( z, \bar{z}, \lambda))^{-1}.
 \end{equation}
Note that $\mathfrak{F}^h_\lambda $  is a $\lambda-$dependent harmonic map satisfying
$(\mathfrak{F}^h_\lambda )^2=e, $ where the square denotes the product in the group $G$.
Moreover, we also  have     $\mathfrak{F}^h_\lambda (z_0, \bar{z}_0,\lambda)  = h.$ Harmonic maps into $G$ satisfying these two properties will be called ``modified harmonic maps".

Below we will use the notation:
$$\mathbb{F}(z, \bar z, \lambda) = \mathfrak{F}^h_{\lambda} (z, \bar z, \lambda) =
    F( z, \bar{z}, \lambda) h F( z, \bar{z}, \lambda))^{-1}.$$

\begin{theorem}
We retain the notation and the assumptions made just above. In particular, $z_0$ is a fixed basepoint in the Riemann surface $M$ Then
there is a bijection between harmonic maps  $\mathcal{F} : M \rightarrow G/\hat{K}$  satisfying
$\mathcal{F}(z_0,\bar{z}_0,\lambda) = e\hat{K}$ and modified harmonic maps, i.e. harmonic maps
  $\mathfrak{F}^h_{\lambda}: M \rightarrow G_h = \{ ghg^{-1}; g \in G \}$ satisfying
   $(\mathfrak{F}^h_{\lambda})^2 = e$ and  $\mathfrak{F}^h_{\lambda}(z_0, \bar{z}_0, \lambda) = h$ for all   $\lambda \in \C^*$.
  This relation is given by composition with the modified  Cartan embedding (and its inverse respectively).
  \end{theorem}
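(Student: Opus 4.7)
The plan is to exploit the fact that the modified Cartan embedding $\mathfrak{C}_h$ is a diffeomorphism from $G/\hat{K}$ onto $G_h$ that is compatible with harmonicity, so that the bijection on harmonic maps reduces to composition with $\mathfrak{C}_h$ and its inverse at each value of $\lambda$.

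First I would verify that $\mathfrak{C}_h : G/\hat{K} \to G$, $g\hat{K} \mapsto ghg^{-1}$, is a well-defined injection onto $G_h$. Well-definedness uses $\hat{K} = \mathrm{Fix}^{\sigma}(G) = C_G(h)$, since $\sigma(g) = hgh^{-1} = g$ is the same as $g$ commuting with $h$; thus $(gk)h(gk)^{-1} = ghg^{-1}$ for every $k \in \hat{K}$. Injectivity is the converse: $g_1hg_1^{-1} = g_2hg_2^{-1}$ forces $g_2^{-1}g_1 \in C_G(h) = \hat{K}$. Every element in the image satisfies $x^2 = g h^2 g^{-1} = e$ using $h^2 \in Z(G) = \{e\}$, and conversely $G_h$ is exactly the conjugacy class of $h$.

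Second, for the forward direction, I would take a harmonic $\mathcal{F}: M \to G/\hat{K}$ with extended frame $F$ normalized by $F(z_0,\bar z_0,\lambda) = e$, and form $\mathbb{F}(z,\bar z,\lambda) = FhF^{-1}$ as in \eqref{harmonicrelation}. By the first step this is well-defined independently of the choice of lift modulo $\hat{K}$; the normalization gives $\mathbb{F}(z_0,\bar z_0,\lambda) = h$; pointwise $\mathbb{F}^2 = e$; and harmonicity of $\mathfrak{F}^h_\lambda = \mathfrak{C}_h \circ \mathcal{F}_\lambda$ follows because $\mathfrak{C}_h$ identifies $G/\hat{K}$ with a totally geodesic isometric submanifold of $G$. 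For the reverse direction, given $\mathfrak{F}^h_\lambda$ satisfying the stated hypotheses, the pointwise identity $(\mathfrak{F}^h_\lambda)^2 = e$ together with $\mathfrak{F}^h_\lambda(z_0,\bar z_0,\lambda) = h$ forces the image to lie in the component $G_h$ of $h$ inside $\{x \in G : x^2 = e\}$. I would then define $\mathcal{F}_\lambda := \mathfrak{C}_h^{-1} \circ \mathfrak{F}^h_\lambda$; this is smooth and harmonic by the inverse totally geodesic embedding, and the basepoint condition $h = \mathfrak{C}_h(e\hat{K})$ gives $\mathcal{F}_\lambda(z_0,\bar z_0) = e\hat{K}$. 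Mutual inversion is then tautological since both directions are composition with $\mathfrak{C}_h^{\pm 1}$.

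The main obstacle I anticipate is not the set-theoretic bijection but the verification that the $\lambda$-family structure is preserved in both directions. In the forward direction one needs to check that $FhF^{-1}$ has the correct twisted dependence on $\lambda$ coming from the twisting \eqref{wild}, so that $\mathfrak{F}^h_\lambda$ is genuinely the composition of $\mathfrak{C}_h$ with the associated family $\mathcal{F}_\lambda$, not merely a family of harmonic maps for each frozen $\lambda$. Conversely, given a family $\mathfrak{F}^h_\lambda$ satisfying the hypotheses for all $\lambda \in \mathbb{C}^*$, one must argue that $\mathcal{F}_\lambda = \mathfrak{C}_h^{-1} \circ \mathfrak{F}^h_\lambda$ is the associated family of a single harmonic map $\mathcal{F} = \mathcal{F}_1$; this follows by pulling back Maurer-Cartan forms through $\mathfrak{C}_h$ and invoking the Pohlmeyer lemma (integrability of $\alpha_\lambda$ in \eqref{integr}) together with the normalization at $z_0$.
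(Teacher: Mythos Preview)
Your proposal is correct and follows essentially the same approach as the paper: both directions use that the modified Cartan embedding $\mathfrak{C}_h$ is an isometric diffeomorphism onto its image $G_h$, so harmonicity and basepoint normalizations transfer by composition with $\mathfrak{C}_h^{\pm 1}$. Your treatment is in fact more explicit than the paper's in verifying well-definedness and injectivity of $\mathfrak{C}_h$ and in flagging the compatibility of the $\lambda$-family structure, which the paper handles only by pointing back to the construction preceding the theorem.
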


\begin{proof}
We have shown ``$\Longrightarrow$" above.
  Assume now we have  a harmonic map
$\mathbb{F}: M \rightarrow  {{G}_h}$ satisfying
 $\mathbb{F}^2 = e$ and  $\mathbb{F}(z_0, \bar{z}_0, \lambda) = h$ for all  $\lambda \in \C^*$.
   Since   $\mathfrak{C}_h$ is an isometric  diffeomorphism   onto its image, we consider
    (for all $\lambda \in \C^*$) the $\lambda$-dependent harmonic map
    $\mathcal{F}_\lambda = (\mathfrak{C}_h)^{-1} \circ \mathbb{F}_\lambda : M \rightarrow G/\hat{K}$.
    Clearly, then we have  $\mathcal{F}(z_0, \bar z_0, \lambda) = e\hat{K}$.  Moreover, by what was shown above, we now infer
    $\mathbb{F}(z, \bar z, \lambda) = \mathfrak{F}^h_{\lambda} (z, \bar z, \lambda) =
    F( z, \bar{z}, \lambda) h F( z, \bar{z}, \lambda))^{-1}$, where $F$ denotes the extended frame of $\mathcal{F}$.
    Since     $\mathcal{F} (z_0, \bar{z}_0, \lambda) = e$,
    $h =   F( z_0 \bar{z}_0, \lambda) h F( z_0, \bar{z}_0, \lambda))^{-1}= \mathfrak{F}^h_{\lambda} ( z_0, \bar{z}_0, \lambda)$ holds.
    \end{proof}

%%%%%%%%%%%%%%%%%%%%%%%%%%%%%

\subsection{\bf{Extended solutions for harmonic maps into Lie groups}}

 In this subsection, we  compare/unify the notation used in \cite{Uh}, \cite{BuGu} and \cite{DPW}.
For a harmonic map  $\mathbb{F}:\D\rightarrow G$, in \cite{Uh},  \cite{BuGu} ``extended solutions" are considered, while in \cite{DPW} always ``extended frames'' are used.  In this subsection we will explain the relation between these methods. Here we include primarily  the details which we will need to use.

We  have shown, among other things, in the  subsections above  that it is essentially sufficient
for our purposes to  consider  harmonic maps into Lie groups $G$.
In this subsection we consider harmonic maps into Lie groups following the approach of
\cite{Uh} and \cite{BuGu}.
 We start by relating the different loop parameters
used in \cite{Uh} , \cite{BuGu} and \cite{DPW} respectively to each other.
Note that in our work results derived from \cite{BuGu} assume to begin
with that $G$ and $G/K$ respectively is compact, but that the result used extends to the case of non-compact $G$ and $G/K$ respectively by the duality theorem. Also note that $M$ is assumed to be compact or non-compact.

 To begin with, we  recall the definition of {\em extended solutions}
following Uhlenbeck \cite{Uh},\cite{BuGu}.
Let  $\mathbb{F}: M  \rightarrow G$  a harmonic map and  let  $\tilde{M}$ be the
universal cover of $M$.
Set
\[\mathbb{A}=\frac{1}{2} \mathbb{F}^{-1}\mathrm{d} \mathbb{F} =\mathbb{A}^{(1,0)}+\mathbb{A}^{(0,1)},\]
where the superscript denotes ``$(1,0)-$part" and  ``$(0,1)-$part" respectively.
Consider {for $\tilde{\lambda} \in \C^*$} the equations
\begin{equation}\label{eq-Uh1}
\left\{
\begin{split}
\partial_z \Phi \mathrm{d}z&=(1-\tilde{\lambda}^{-1})\Phi\mathbb{A}^{(1,0)},\\
\partial_{\bar{z}} \Phi \mathrm{d} \bar{z}&=(1-\tilde{\lambda})\Phi\mathbb{A}^{(0,1)}.\\
\end{split}
\right.
\end{equation}
with $\Phi:  \tilde{M} \rightarrow \Omega G$, where the corresponding loop parameter
 is denoted here by $\tilde{\lambda},$ and $\Omega G$ denotes all elements $g(\tilde{\lambda}) \in \Lambda G$ which satisfy
 $g(\tilde{\lambda} = 1) =e.$
 Then, by Theorem 2.2 of \cite{Uh} (Theorem 1.1 of \cite{BuGu}),
there exists a solution $\Phi(z,\bar{z},\tilde{\lambda})$ to the above
equations such that
\begin{equation} \label{cond for Phi}
\Phi(z,\bar{z},\tilde{\lambda}=1)=e,~\ \mbox{and}
~ \Phi(z,\bar{z},\tilde{\lambda}=-1)= \mathbb{F} (z, \bar z)
\end{equation}
hold. This solution is unique up to multiplication by some
$\gamma \in \Omega G = \{ g \in \Lambda G^{\C}_\sigma |,  g(\lambda = 1) = e \}$ satisfying $\gamma (-1) = e$. Such solutions $\Phi$ are said to be {\bf extended solutions}.

  % If we also have $\mathbb{F}(z_0)=e$ , we can also choose $\Phi(z_0,\bar{z}_0, \tilde\lambda)=e$.
%Although the assumption  $\mathbb{F}(z_0)=e$ was
%used in \cite{Uh}, we will, as in \cite{BuGu}, not assume  this, since it is not satisfied
 %in a large part of this section. The following statement  is straightforward.

\begin{lemma}\label{lemma-es} \cite{BuGu}  Let $\Phi(z,\bar{z}, \tilde{\lambda})$ be
 {\em an
extended solution} of the harmonic map  $\mathbb{F}:  \tilde{M} \rightarrow G$. Let $\gamma\in \Omega G$.
Then $\gamma(\lambda)\Phi(z,\bar{z},\tilde{\lambda})$ is an extended solution of the
harmonic map $\gamma(-1)\mathbb{F}(z,\bar{z})$.
\end{lemma}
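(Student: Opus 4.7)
The plan is to verify both the differential equations \eqref{eq-Uh1} and the normalisation conditions \eqref{cond for Phi} for the candidate extended solution $\tilde\Phi(z,\bar z,\tilde\lambda):=\gamma(\tilde\lambda)\Phi(z,\bar z,\tilde\lambda)$ associated with $\tilde{\mathbb{F}}(z,\bar z):=\gamma(-1)\mathbb{F}(z,\bar z)$. The key observation, which makes the verification essentially a one-line check, is that $\gamma$ depends only on the loop parameter $\tilde\lambda$ and $\gamma(-1)$ is a constant element of $G$, so left-multiplication by $\gamma$ leaves Maurer-Cartan forms invariant in $z,\bar z$.

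First I would compute the Maurer-Cartan form of $\tilde{\mathbb{F}}$. Since $\gamma(-1)\in G$ is constant on $\tilde M$,
\[
\tilde{\mathbb{A}}=\tfrac12\tilde{\mathbb{F}}^{-1}\dd\tilde{\mathbb{F}}=\tfrac12\mathbb{F}^{-1}\gamma(-1)^{-1}\gamma(-1)\dd\mathbb{F}=\mathbb{A},
\]
so the $(1,0)$- and $(0,1)$-parts that appear on the right-hand side of \eqref{eq-Uh1} are unchanged when we pass from $\mathbb{F}$ to $\tilde{\mathbb{F}}$. Next I would compute $\tilde\Phi^{-1}\partial_z\tilde\Phi$ and $\tilde\Phi^{-1}\partial_{\bar z}\tilde\Phi$. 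Because $\gamma=\gamma(\tilde\lambda)$ is independent of $z,\bar z$,
\[
\tilde\Phi^{-1}\partial_z\tilde\Phi=\Phi^{-1}\gamma^{-1}\gamma\,\partial_z\Phi=\Phi^{-1}\partial_z\Phi,
\]
and similarly for $\partial_{\bar z}$. Substituting the hypothesised equations for $\Phi$ yields
\[
\partial_z\tilde\Phi\,\dd z=(1-\tilde\lambda^{-1})\tilde\Phi\,\tilde{\mathbb{A}}^{(1,0)},\qquad \partial_{\bar z}\tilde\Phi\,\dd\bar z=(1-\tilde\lambda)\tilde\Phi\,\tilde{\mathbb{A}}^{(0,1)},
\]
so $\tilde\Phi$ solves \eqref{eq-Uh1} with $\mathbb{F}$ replaced by $\tilde{\mathbb{F}}$.

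Finally I would check the normalisations in \eqref{cond for Phi}. Since $\gamma\in\Omega G$ we have $\gamma(1)=e$, whence $\tilde\Phi(z,\bar z,1)=\gamma(1)\Phi(z,\bar z,1)=e$; and $\tilde\Phi(z,\bar z,-1)=\gamma(-1)\Phi(z,\bar z,-1)=\gamma(-1)\mathbb{F}(z,\bar z)=\tilde{\mathbb{F}}(z,\bar z)$, as required. Together these show that $\tilde\Phi$ is an extended solution for $\tilde{\mathbb{F}}$.

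There is no genuine obstacle in this argument; the only point worth emphasising is that the invariance of the right-hand sides of the defining ODE under the transformation $\Phi\mapsto\gamma\Phi$, $\mathbb{F}\mapsto\gamma(-1)\mathbb{F}$ relies crucially on $\gamma$ being independent of $z,\bar z$ and on $\gamma(-1)$ being a global constant in $G$, which is exactly what the hypothesis $\gamma\in\Omega G$ provides.
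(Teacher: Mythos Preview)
Your proof is correct. The paper itself does not supply a proof of this lemma at all; it merely cites \cite{BuGu} and moves on, so there is nothing to compare against beyond noting that your direct verification of the defining ODEs \eqref{eq-Uh1} and the normalisations \eqref{cond for Phi} is exactly the standard argument one would expect.
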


\begin{remark}
Note that for extended solutions we do not need a special proviso, as we need for extended frames.
\end{remark}

%\begin{remark}%%%%%%%%%%%%%%%%%%%
\subsubsection{\bf{Describing harmonic maps into $G$, when considering $G$ as  symmetric space}}

	Next we show how the ``DPW approach" without the basepoint assumption  \cite{DPW} naturally leads to
	Uhlenbeck's extended solutions \cite{Uh}.
	We follow Section 9 of \cite{Do-Es} and consider the Lie group $G$  as the outer symmetric space
	$G = (G \times G)/ \Delta,$
where the defining symmetry $\tilde{\sigma}$ is given by
	$$\tilde{\sigma} (a,b) = ( b,a)$$ and we have  $$\Delta  = \{ (a,b) \in  G \times G; a=b \}.$$
	
	For the purposes of the DPW approach it is necessary to consider the $G \times G-$loop group
	$\Lambda(G \times G)$
	twisted by $\tilde{\sigma}$. We thus consider the automorphism of the loop group
	$\Lambda(G \times G) = \Lambda G \times \Lambda G$ given by
	$$\hat{\tilde{\sigma}} ((a,b)) (\lambda) = \tilde{\sigma}( a(\lambda), b(\lambda)) =
	(b(\lambda), a(\lambda)).$$

It is straightforward to verify that the twisted loop group $\Lambda(G \times G)_{\tilde{\sigma}}$
	is given by
	$$\Lambda(G \times G)_{\tilde{\sigma}} = \{ (g(-\lambda), g(\lambda)) ; g(\lambda) \in \Lambda G \}
	 \cong \Lambda G. $$

	Let's consider now a harmonic map $\mathbb{F}: M \rightarrow G.$
	Then the map
	$\mathring{\mathfrak{F}}:M \rightarrow G \times G$, given by
	 $\mathring{\mathfrak{F}}(z, \bar{z}) = \left(\mathbb F(z,\bar{z}),e\right)$,  is a global frame of $\mathbb{F}$.
	
	Following \cite{DPW} one needs to decompose the Maurer-Cartan form
	$\mathfrak{A} = \mathring{\mathfrak{F}}^{-1} \mathrm{d} \mathring{\mathfrak{F}}$ of $\mathring{\mathfrak{F}}$  into the eigenspaces of
	$\tilde\sigma$ and to introduce the loop parameter $\lambda$.
	One obtains (see \cite{Do-Es}, formula $(68)$):
	\begin{equation}
	\mathfrak{A}_{\lambda} = \left( \left(1+\lambda^{-1}\right) \mathbb A^{(1,0)} +  \left(1+\lambda\right)\mathbb A^{(0,1)} ,\ \left(1-\lambda^{-1}\right) \mathbb A^{(1,0)}  +  \left(1-\lambda\right) \mathbb A^{(0,1)} \right).
	\end{equation}

	\begin{theorem}
		Let  $G$  be a connected, compact or non-compact, semi-simple real Lie group with trivial center.
		Let  $\mathbb{F}: M \rightarrow G$ be a harmonic map.  Then, when  representing $G$ as the
		symmetric space  $G = (G \times G)/\Delta,$
		any extended  frame $\mathring{\mathfrak{F}} : \tilde{M}  \rightarrow \Lambda(G \times G)_{\tilde\sigma}$
		of $\mathbb{F}$ satisfying
		$\mathring{\mathfrak{F}}(z, \bar{z}, \lambda = 1) = \left(\mathbb F(z,\bar{z}),e\right)$
		is given  by a pair of functions,
		\[\mathring{\mathfrak{F}}(z, \bar{z},\lambda) =  ( \Phi (z, \bar{z},-\lambda), \Phi (z, \bar{z},\lambda)),\]
		where the  matrix function $ \Phi (z, \bar{z},\lambda)$ is an  extended solution  for $\mathbb{F}$ in the sense of Uhlenbeck \cite{Uh} as introduced above.
	\end{theorem}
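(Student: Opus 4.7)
The plan is to exploit the explicit description of the twisted loop group $\Lambda(G\times G)_{\tilde\sigma}$ established just before the theorem. Recall that this subgroup consists precisely of the pairs $\bigl(g(-\lambda), g(\lambda)\bigr)$ with $g\in\Lambda G$. Hence any map $\mathring{\mathfrak{F}}:\tilde M\to\Lambda(G\times G)_{\tilde\sigma}$ is automatically of the form $\bigl(\Phi(z,\bar z,-\lambda),\,\Phi(z,\bar z,\lambda)\bigr)$ for a uniquely determined $\Phi:\tilde M\to\Lambda G$. Thus the existence of the claimed presentation is essentially formal: what remains is to verify that this $\Phi$ satisfies the defining properties of an Uhlenbeck extended solution.

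Next, I would translate the extended frame condition $\mathring{\mathfrak{F}}^{-1}\mathrm{d}\mathring{\mathfrak{F}}=\mathfrak{A}_\lambda$ into equations for $\Phi$. Using the formula for $\mathfrak{A}_\lambda$ given just before the theorem and equating second components one obtains
\[
\Phi(z,\bar z,\lambda)^{-1}\mathrm{d}\Phi(z,\bar z,\lambda) = (1-\lambda^{-1})\mathbb A^{(1,0)} + (1-\lambda)\mathbb A^{(0,1)},
\]
which is precisely Uhlenbeck's extended solution system (\ref{eq-Uh1}) with $\tilde\lambda=\lambda$. The first components give
\[
\Phi(z,\bar z,-\lambda)^{-1}\mathrm{d}\Phi(z,\bar z,-\lambda) = (1+\lambda^{-1})\mathbb A^{(1,0)} + (1+\lambda)\mathbb A^{(0,1)},
\]
which, under the substitution $\lambda\mapsto -\lambda$, reproduces the previous equation. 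Hence no extra condition is imposed, and the two components are automatically compatible.

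Finally, I would read off the normalizations from the basepoint conditions. The hypothesis $\mathring{\mathfrak{F}}(z,\bar z,\lambda=1)=(\mathbb F(z,\bar z),e)$ translates under the parametrization to $\Phi(z,\bar z,-1)=\mathbb F(z,\bar z)$ and $\Phi(z,\bar z,1)=e$, which are exactly the conditions (\ref{cond for Phi}) characterizing $\Phi$ as Uhlenbeck's extended solution in $\Omega G$ (once one also checks that $\Phi$ takes values in $G$, which follows from $\mathring{\mathfrak{F}}$ being $G\times G$-valued together with the identification $\Lambda(G\times G)_{\tilde\sigma}\cong\Lambda G$).

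The only real obstacle is sign bookkeeping: one must check that the twisting convention $\hat{\tilde\sigma}(\gamma)(\lambda)=\gamma(-\lambda)$ really forces the presentation $(\Phi(-\lambda),\Phi(\lambda))$ rather than $(\Phi(\lambda),\Phi(-\lambda))$, and that the substitution $\lambda\leftrightarrow-\lambda$ relating the two component equations of $\mathfrak{A}_\lambda$ matches Uhlenbeck's choice of parameter in (\ref{eq-Uh1}). Once these conventions are aligned, the argument reduces to inspection of coefficients.
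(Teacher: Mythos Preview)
Your proposal is correct and follows essentially the same approach as the paper. The only organizational difference is that the paper first fixes an auxiliary extended solution $\Psi$ (with the normalizations \eqref{cond for Phi}), writes the general solution of the Maurer--Cartan equation as $\bigl(B(-\lambda)\Psi(-\lambda),\,B(\lambda)\Psi(\lambda)\bigr)$, and then uses the condition at $\lambda=1$ to force $B(\pm1)=e$, finally setting $\Phi=B\Psi$; you instead invoke the twisted-loop-group description up front to name the second component $\Phi$ directly and then verify the ODE and normalizations, which is slightly cleaner but amounts to the same argument.
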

	
	\begin{proof}
		Since the two components of $\mathfrak{A} $ only differ by a minus sign in $\lambda$, any solution to
		the equation $\mathfrak{A}_\lambda  =
		\mathring{\mathfrak{F}}(z, \bar z, \lambda)^{-1} \mathrm{d} \mathring{\mathfrak{F}}(z, \bar z, \lambda)$
		is of the form $ \mathring{\mathfrak{F}}(z, \bar z, \lambda) =
		(B(-\lambda) \Psi(z, \bar z, - \lambda) , B(\lambda) \Psi(z, \bar z,  \lambda))$,
		where $\Psi$ solves the equations \eqref{eq-Uh1}. Moreover, we can assume w.l.g. that $\Psi$ satisfies the two conditions for extended solutions stated above for $\lambda = \pm1$.
		Now
		$\mathfrak{F}(z, \bar{z}, \lambda = 1) = \left(\mathbb F(z,\bar{z}),e\right)$ implies
		{$B(1)  = B(1) \Psi (z, \bar{z},\lambda= 1) =e$ and $B( -1)\Psi(z, \bar z, -1) =
		 B(-1) \mathbb{F}(z, \bar z) = \mathbb{F}(z, \bar z),$
		 whence $B(-1) = e$ follows.
		 Therefore, $\Phi (z, \bar z, \lambda)  = B(\lambda)  \Psi(z, \bar z, \lambda)$}
		yields the claim.
	\end{proof}

	Note that in this theorem no normalization is required. Moreover, the loop parameter used in \cite{Uh} is the same as the one used in \cite{DPW}. However, the matrix functions $B(\lambda)$ and  $\Phi (z, \bar z, \lambda)$
	are not uniquely determined which causes the DPW procedure to yield quite arbitrary potentials, not easily permitting any converse construction procedure.

%MMMMMMMMMMMMMMMMMMMMMMMMMMM
\subsection{\bf{Extended solutions and extended frames for harmonic maps into inner symmetric spaces}} \label{414}

Consider as before a harmonic map  $\mathcal{F}: M \rightarrow G/\hat{K}$ into a symmetric space
with inner involution $\sigma$, given by $\sigma(g) = h g h^{-1}$ and with $\hat{K} = Fix^\sigma (G)$.
 As above we consider the modified harmonic map $\mathbb{F}: \tilde{M} \rightarrow G$ given by
 \begin{equation}
     \label{eq-FF}
     \mathbb{F}(z, \bar z, \lambda) =   \mathfrak{F}^h_{\lambda} (z, \bar z, \lambda) =
 F( z, \bar{z}, \lambda) h F( z, \bar{z}, \lambda)^{-1}.
 \end{equation}
  For this $\lambda-$family of harmonic maps $\mathbb{F}_\lambda$ we compute
 \begin{equation*}
 \begin{split}
\mathbb{A}&=\frac{1}{2}\mathbb{F}_{\lambda}^{-1}\mathrm{d}\mathbb{F}_{\lambda} \\
\ &=\frac{1}{2}\left(F(z,\bar{z},\lambda)hF(z,\bar{z},\lambda)^{-1}\right)^{-1}\mathrm{d}\left(F(z,\bar{z},\lambda)hF(z,\bar{z},\lambda)^{-1}\right)\\
\ &=\frac{1}{2}F(z,\bar{z},\lambda) h^{-1}\alpha_{\lambda} h F(z,\bar{z},\lambda)^{-1}-\frac{1}{2}\mathrm{d}F(z,\bar{z},\lambda)F(z,\bar{z},\lambda)^{-1}\\
\ &=\frac{1}{2}F(z,\bar{z},\lambda)\left(\alpha_{-\lambda}-\alpha_{\lambda}\right)F(z,\bar{z},\lambda)^{-1}\\
\ &=-F(z,\bar{z},\lambda)\left(\lambda^{-1}\alpha_{\mathfrak{p}}'+\lambda\alpha_{\mathfrak{p}}''\right)F(z,\bar{z},\lambda)^{-1}.\\
\end{split}
\end{equation*}
Following Uhlenbeck's approach we need to introduce a new ``loop parameter" $\tilde{\lambda}$ now and consider Uhlenbeck's differential equation \eqref{eq-Uh1} for
$\Phi(z,\bar{z},\lambda,\tilde{\lambda})$  on $\tilde{M}$ with conditions  (\ref{cond for Phi})   for $\tilde{\lambda} = \pm 1:$
\begin{equation}\label{eq-Uh2}
\left\{
\begin{split}
\partial_z \Phi \mathrm{d}z&=-\Phi(1-\tilde{\lambda}^{-1})\lambda^{-1}F(z,\bar{z},\lambda) \alpha_{\mathfrak{p}}' F(z,\bar{z},\lambda)^{-1}\\
\partial_{\bar{z}} \Phi\mathrm{d}\bar z&=-\Phi(1-\tilde{\lambda})\lambda F(z,\bar{z},\lambda) \alpha_{\mathfrak{p}}'' F(z,\bar{z},\lambda)^{-1}\\
\end{split}
\right.\end{equation}
From (\ref{eq-Uh2}) it is natural to consider the Maurer-Cartan form $\widetilde{\mathbb{A}} $ of $\Phi (z,\bar{z},\lambda,\tilde{\lambda})F(z,\bar{z},\lambda)$. One obtains:
\begin{equation}
\begin{split}
\widetilde{\mathbb{A}}&=(\Phi F)^{-1} \dd ( \Phi F)\\
&=F^{-1}\dd F+F^{-1}(\Phi^{-1}\dd  \Phi) F\\
&=\alpha_{\lambda}-F^{-1}\Phi^{-1}\left(\Phi(1-\tilde{\lambda}^{-1})\lambda^{-1}F  \alpha_{\mathfrak{p}}' F ^{-1}+\Phi(1-\tilde{\lambda})\lambda F \alpha_{\mathfrak{p}}'' F ^{-1}\right)F\\
&=\alpha_{\lambda}- \left( (1-\tilde{\lambda}^{-1})\lambda^{-1}  \alpha_{\mathfrak{p}}'  +(1-\tilde{\lambda})\lambda \alpha_{\mathfrak{p}}''\right)\\
&=\lambda^{-1}  \alpha_{\mathfrak{p}}'+\alpha_{\mathfrak k}+\lambda \alpha_{\mathfrak{p}}''- \left( (1-\tilde{\lambda}^{-1})\lambda^{-1}  \alpha_{\mathfrak{p}}'  +(1-\tilde{\lambda})\lambda \alpha_{\mathfrak{p}}''\right)\\
&=\tilde{\lambda}^{-1}\lambda^{-1}  \alpha_{\mathfrak{p}}'+\alpha_{\mathfrak k}+\tilde{\lambda}\lambda \alpha_{\mathfrak{p}}'' \\
&=\alpha_{\tilde{\lambda}\lambda}. \\
\end{split}
\end{equation}
From this we derive immediately the relation
\begin{equation}\label{eq-lawson}
F(z,\bar{z}, \lambda\tilde{\lambda})=A(\lambda, \tilde{\lambda}) \Phi (z,\bar{z},\lambda,\tilde{\lambda})F(z,\bar{z}, \lambda).
\end{equation}
Substituting here  $z = z_0$ we derive, in view of the normalization of $F$ at $z = z_0$:
\begin{equation}\label{eq-A}
A(\lambda, \tilde{\lambda}) = \Phi (z_0 ,{\bar{z}}_0,\lambda,\tilde{\lambda})^{-1}.
\end{equation}
In particular, setting $\lambda=1$ in \eqref{eq-lawson} we obtain
\begin{equation}
F(z,\bar{z}, \tilde{\lambda})=A(1, \tilde{\lambda}) \Phi (z,\bar{z},1,\tilde{\lambda})F(z,\bar{z},1).
\end{equation}

Setting $\tilde\lambda=-1$ in \eqref{eq-lawson} we obtain $A(\lambda, -1)=F(z,\bar{z},-\lambda)\left( \Phi (z,\bar{z},\lambda,-1)F(z,\bar{z},\lambda)\right)^{-1}$.
The twisting condition for $F$ is $ F(z, \bar z, \lambda)  =h F(z, \bar z, -\lambda) h^{-1}$, whence
\[\begin{split}
    A(\lambda, -1)&=F(z,\bar{z},-\lambda)\left(\Phi (z,\bar{z},\lambda,-1)F(z,\bar{z},\lambda)\right)^{-1} \\
&    = h^{-1}F(z, \bar z, \lambda) h F(z, \bar z, \lambda)^{-1}  \Phi (z,\bar{z},\lambda,-1)^{-1} \\
&=  \Phi (z_0,\bar{z}_0,\lambda,-1)^{-1},
\end{split}\] since the left side is independent of $z$ and $F(z_0, \bar z_0, \lambda) =e$.
But now (\ref{cond for Phi}) and \eqref{eq-FF}  yield that the last term above is equal to
$\mathbb{F}(z_0, \bar{z}_0, \lambda)^{-1} = h^{-1}.$ Hence we have
\begin{equation}
A(\lambda, -1)={h^{-1}.}
\end{equation}

In summary we obtain (by setting $\lambda = 1$ and replacing $\tilde{\lambda}$ by $\lambda$):
\begin{proposition}\label{cor-Phi-F}
The extended solution $\Phi$, and the $\sigma-$twisted  extended frame $F$ satisfy
\begin{equation}\label{eq-lawson2}
\Phi (z, \bar{z},1,\lambda) = A(1, \lambda)^{-1}F(z,\bar{z},  \lambda)F(z,\bar{z}, 1)^{-1}.
\end{equation}
In particular, $\Phi  (z, \bar{z},1,\lambda)$ is contained in the based loop group $\Omega G$. Moreover,
for $\lambda = -1$  we obtain the harmonic map $ \mathbb{F}(z,\bar z,1) =  \mathbb{F}(z,\bar z) $.
\end{proposition}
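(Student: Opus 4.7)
The plan is to read the proposition off directly from the master identity \eqref{eq-lawson}, which was already established by computing the Maurer--Cartan form of $\Phi(z,\bar z,\lambda,\tilde\lambda)F(z,\bar z,\lambda)$ and recognising it as $\alpha_{\tilde\lambda\lambda}$. Specialising the outer loop parameter to $\lambda = 1$ in \eqref{eq-lawson} and then relabelling $\tilde\lambda$ as $\lambda$ gives
\[
F(z,\bar z,\lambda) \;=\; A(1,\lambda)\,\Phi(z,\bar z,1,\lambda)\,F(z,\bar z,1),
\]
and solving for $\Phi$ immediately produces \eqref{eq-lawson2}.

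For the based-loop-group claim I would evaluate \eqref{eq-lawson2} at the new loop parameter $\lambda = 1$: two of the three factors on the right cancel, leaving $\Phi(z,\bar z,1,1) = A(1,1)^{-1}$. By \eqref{eq-A} this equals $\Phi(z_0,\bar z_0,1,1)$, which is $e$ by the defining normalisation $\Phi(z,\bar z,\tilde\lambda = 1) = e$ of an Uhlenbeck extended solution. Hence the map $\lambda \mapsto \Phi(z,\bar z,1,\lambda)$ is the identity at $\lambda = 1$ and therefore lies in $\Omega G$.

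The $\lambda = -1$ assertion is obtained by substituting $\lambda = -1$ into \eqref{eq-lawson2}. Using $A(1,-1) = h^{-1}$ (established just above the proposition) together with the twisting relation $F(z,\bar z,-1) = h^{-1}F(z,\bar z,1)h$, a one-line rearrangement collapses the right-hand side to $F(z,\bar z,1)\,h\,F(z,\bar z,1)^{-1}$, which is exactly $\mathbb{F}(z,\bar z,1)$ by \eqref{eq-FF}; this in turn coincides with $\mathbb{F}(z,\bar z)$ since the defining condition $(\ref{cond for Phi})$ forces $\Phi(\,\cdot\,,\,\tilde\lambda = -1) = \mathbb{F}$.

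There is essentially no obstacle in the proof itself, because the difficult identifications---namely the Maurer--Cartan calculation that produced \eqref{eq-lawson} and the boundary evaluations $A(1,1) = e$ and $A(\lambda,-1) = h^{-1}$---have already been carried out in the preceding pages. The proposition is best viewed as the clean repackaging of those computations into a single explicit bridge from the DPW $\sigma$-twisted extended frame $F$ to Uhlenbeck's (untwisted) extended solution $\Phi$.
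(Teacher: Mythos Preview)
Your proof is correct and follows essentially the same route as the paper. The paper treats \eqref{eq-lawson2} and the $\Omega G$ claim as immediate consequences of the discussion preceding the proposition (the sentence ``In summary we obtain \ldots'') and only writes out the $\lambda=-1$ computation, which is line-for-line the same as yours: $A(1,-1)^{-1}=h$, the twisting relation $F(z,\bar z,-1)=h^{-1}F(z,\bar z,1)h$, and then \eqref{eq-FF}. One minor remark: your verification that $\Phi(z,\bar z,1,1)=e$ via \eqref{eq-lawson2} and \eqref{eq-A} is a small detour, since $\Phi(z,\bar z,\lambda,\tilde\lambda=1)=e$ is already part of the defining normalisation \eqref{cond for Phi} of the extended solution for every $\lambda$.
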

\begin{proof} {We need only to verify the last statement. In fact, we have
\begin{equation*}
\begin{split}
\Phi (z, \bar{z},1,-1)&={A(1,-1)^{-1} } F(z,\bar{z}, -1)F(z,\bar{z}, 1)^{-1}\\
&=h (h^{-1} F(z, \bar z,1) h) F(z,\bar{z}, 1)^{-1} \\
&=F(z,\bar{z}, 1)hF(z,\bar{z}, 1)^{-1}\\
&=\mathbb{F}(z,\bar z,1).
\end{split}
\end{equation*}}
\end{proof}

\begin{proposition} \label{algebraic-harmonic <--->ext. sol.}
Let $\mathcal{F}: M \rightarrow G/\hat{K}$ be a harmonic map with associated modified harmonic map
$\mathbb{F}: M \rightarrow G$ and extended frame $F$ and extended solution  $\Phi: \tilde{M} \rightarrow \Omega G$
respectively. Then the harmonic map $\mathcal{F}: M \rightarrow G/\hat{K}$ is algebraic if and only if
there exists some $B(\lambda) \in \Omega G$  such that  $ B \Phi$ is a Laurent polynomial in $\lambda$.
\end{proposition}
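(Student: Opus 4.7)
The plan is to exploit the explicit identity \eqref{eq-lawson2},
\[
\Phi(z,\bar z,1,\lambda) \;=\; A(1,\lambda)^{-1}\, F(z,\bar z,\lambda)\, F(z,\bar z,1)^{-1},
\]
which directly links the extended frame $F$ to the extended solution $\Phi$ through a single $\lambda$-independent factor $F(z,\bar z,1)$ on the right and the pure loop $A(1,\lambda)\in\Omega G$ on the left; the latter lies in $\Omega G$ because $\Phi(z_0,\bar z_0,1,1)=e$, hence $A(1,1)=e$. Both implications will reduce to propagating Laurent polynomiality in $\lambda$ across this identity.

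For the forward direction, I would assume $\mathcal F$ is algebraic so that $F$ is a Laurent polynomial in $\lambda$, and simply take $B(\lambda):=A(1,\lambda)\in\Omega G$. The identity then collapses to $B\Phi = F(z,\bar z,\lambda)\,F(z,\bar z,1)^{-1}$; since $F(z,\bar z,1)^{-1}$ is $\lambda$-independent, this is a Laurent polynomial in $\lambda$.

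For the converse, assume $B\Phi$ is a Laurent polynomial in $\lambda$ for some $B\in\Omega G$. Multiplying \eqref{eq-lawson2} on the right by the $\lambda$-independent factor $F(z,\bar z,1)$ yields
\[
B(\lambda) A(1,\lambda)^{-1}\, F(z,\bar z,\lambda) \;=\; (B\Phi)(z,\bar z,1,\lambda)\cdot F(z,\bar z,1),
\]
which is a Laurent polynomial in $\lambda$. Evaluating at $z=z_0$, where $F(z_0,\bar z_0,\lambda)=e$, shows that the pure loop $BA^{-1} := B(\lambda)A(1,\lambda)^{-1}$ is itself a Laurent polynomial in $\lambda$. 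One would then like to read off $F = (BA^{-1})^{-1}(B\Phi)\,F(z,\bar z,1)$ as a product of Laurent polynomials, but the main obstacle is that the inverse of a Laurent polynomial matrix loop is, a priori, only rational in $\lambda$.

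I would resolve this obstacle using the semisimplicity of $G$: in any faithful matrix representation, $\det$ is constant on $G$, so for any $\Psi\in\Lambda G$ that is a Laurent polynomial in $\lambda$, the entries of $\Psi^{-1}=\mathrm{adj}(\Psi)/\det\Psi$ are themselves Laurent polynomials in $\lambda$. Applied to $\Psi=BA^{-1}$, this forces $(BA^{-1})^{-1}$ to be a Laurent polynomial. Consequently, $F$ is a product of Laurent polynomials in $\lambda$ with the $\lambda$-independent matrix $F(z,\bar z,1)$, and is therefore itself a Laurent polynomial in $\lambda$. This exhibits $\mathcal F$ as algebraic, completing the equivalence.
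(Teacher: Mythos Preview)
Your proof is correct and follows essentially the same route as the paper: both directions use the identity \eqref{eq-lawson2}, take $B=A(1,\cdot)$ for the forward implication, and evaluate at the base point $z_0$ to see that $BA^{-1}$ is a Laurent polynomial in the converse. Your argument is in fact slightly more careful than the paper's, which simply asserts that $F(z,\bar z,\lambda)F(z,\bar z,1)^{-1}$ is a Laurent polynomial once $B\Phi$ and $BA^{-1}$ are; you make explicit why $(BA^{-1})^{-1}$ is again a Laurent polynomial via the constancy of $\det$ on the semisimple group $G$ and the adjugate formula.
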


\begin{proof}
``$\Rightarrow$" If the extended frame $F$ is a Laurent polynomial in $\lambda$, then Proposition \ref{cor-Phi-F} implies that
$A(1, \lambda)\Phi(z,\bar{z},  \lambda)$ is a Laurent polynomial. Moreover, $A(1,1) = e.$

``$\Leftarrow$" Assume $B(\lambda) \Phi(z, \bar z, \lambda)$ is a Laurent polynomial with
$B\in \Omega G$. Then by Proposition \ref{cor-Phi-F}
 we infer that
$B(\lambda) \Phi(z, \bar z, \lambda) =
B(\lambda) A(1, \lambda)^{-1}F(z,\bar{z},  \lambda)F(z,\bar{z}, 1)^{-1}$
is a Laurent polynomial in $\lambda$.
Inserting $z= z_0$ we obtain that also $B(\lambda) A(1, \lambda)^{-1}$ is a Laurent polynomial. As a consequence,
also $F(z,\bar{z},  \lambda)F(z,\bar{z}, 1)^{-1}$ is a Laurent polynomial, whence the claim.
\end{proof}
Analogously we can discuss the property ``totally symmetric" which is equivalent to ``the monodromy representation
of the extended frame is trivial". For this we recall that in equations (\ref{eq-Uh1}) and (\ref{cond for Phi}) we have fixed an extended solution $\Phi$ for some harmonic map $\mathbb{F} : M \rightarrow G$ associated to a harmonic map $\mathcal{F}: M \rightarrow G/\hat{K}$.

\begin{proposition} \label{monodromyPhi}
{Let $\chi(g,\lambda)$ denote the monodromy representation of $\mathcal{F}$. Then we have}
\begin{equation}
    \label{eq-mono-Phi}
    g^*\Phi_\lambda (z, \bar z) =      (A(1, \lambda)^{-1} \chi(g, \lambda) A(1, \lambda))
\Phi_{\lambda} (z, \bar z) \chi(g,\lambda =1)^{-1}, \hspace{2mm} g \in \pi_1(M).
\end{equation}
\end{proposition}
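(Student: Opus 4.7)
The plan is to apply $g \in \pi_1(M)$ directly to the identity of Proposition \ref{cor-Phi-F}, namely
\[
\Phi(z,\bar{z},1,\lambda) = A(1,\lambda)^{-1} F(z,\bar{z},\lambda) F(z,\bar{z},1)^{-1},
\]
and then re-express the result again in terms of $\Phi$. Since $A(1,\lambda)$ is by \eqref{eq-A} evaluated at the basepoint $z_0$, it is independent of $z$, so $g^*A(1,\lambda)=A(1,\lambda)$. The only $z$-dependent ingredients are the two factors $F(z,\bar z,\lambda)$ and $F(z,\bar z,1)$.

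First I would substitute the monodromy behavior of the extended frame as stated in Lemma \ref{frametransform} and Section \ref{trivmonorep}, namely
\[
F(g.z,\overline{g.z},\lambda) = \chi(g,\lambda) F(z,\bar{z},\lambda) k(g,z,\bar{z}),
\]
where the crucial point is that $k(g,z,\bar{z}) \in K$ is \emph{independent of $\lambda$}. Setting $\lambda = 1$ gives the analogous formula with $\chi(g,1)$ and the same $k$. Then
\begin{equation*}
\begin{split}
g^*\Phi_{\lambda}(z,\bar z)
&= A(1,\lambda)^{-1} F(g.z,\overline{g.z},\lambda) F(g.z,\overline{g.z},1)^{-1} \\
&= A(1,\lambda)^{-1} \chi(g,\lambda) F(z,\bar z,\lambda) k(g,z,\bar z) k(g,z,\bar z)^{-1} F(z,\bar z,1)^{-1} \chi(g,1)^{-1}\\
&= A(1,\lambda)^{-1} \chi(g,\lambda) \bigl[ F(z,\bar z,\lambda) F(z,\bar z,1)^{-1} \bigr] \chi(g,1)^{-1}.
\end{split}
\end{equation*}
The $k$-factors cancel precisely because they do not depend on $\lambda$, which is the only (mild) observation one has to make.

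Finally I would substitute back $F(z,\bar z,\lambda) F(z,\bar z,1)^{-1} = A(1,\lambda) \Phi_{\lambda}(z,\bar z)$ from Proposition \ref{cor-Phi-F}, obtaining
\[
g^*\Phi_{\lambda}(z,\bar z) = \bigl( A(1,\lambda)^{-1}\chi(g,\lambda) A(1,\lambda) \bigr) \Phi_{\lambda}(z,\bar z)\, \chi(g,1)^{-1},
\]
which is precisely \eqref{eq-mono-Phi}. There is no real obstacle: the result is essentially bookkeeping around Proposition \ref{cor-Phi-F}, and the only step that could trip one up is ensuring the $K$-valued gauge factor $k(g,z,\bar z)$ cancels, which it does because it is $\lambda$-independent.
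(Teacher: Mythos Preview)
Your proof is correct and follows exactly the approach the paper indicates: the paper's own proof simply says that \eqref{eq-mono-Phi} is ``straightforward to derive'' from \eqref{eq-lawson2}, and your argument spells out precisely that derivation, including the key observation that the $K$-valued factor $k(g,z,\bar z)$ cancels because it is $\lambda$-independent.
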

\begin{proof}It is straightforward to derive \eqref{eq-mono-Phi} from the equations (\ref{eq-lawson2}).
\end{proof}

\begin{corollary}
Let $\mathcal{F}: M \rightarrow G/\hat{K}$ be a harmonic map with associated modified harmonic map
$\mathbb{F}: M \rightarrow G$ and extended frame $F : \tilde{M} \rightarrow \Lambda G_\sigma$ and
extended solution  $\Phi: \tilde{M} \rightarrow \Omega G$
respectively. Then the harmonic map $\mathcal{F}: M \rightarrow G/\hat{K}$ is totally symmetric if and only if {$\Phi$  is totally symmetric in the sence that the monodromy representation of $\Phi$ is trivial.}
\end{corollary}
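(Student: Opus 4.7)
The plan is to deduce both directions directly from the monodromy formula \eqref{eq-mono-Phi} of Proposition \ref{monodromyPhi}, by evaluating at the base point $z_0$ (where the normalization $F(z_0, \bar z_0, \lambda) = e$ gives an explicit expression for $\Phi_\lambda(z_0, \bar z_0)$) and by invoking the fullness of $\mathcal{F}$.

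For the implication from $\mathcal{F}$ totally symmetric to $\Phi$ totally symmetric, I would simply substitute $\chi(g, \lambda) = e$ (which by Proposition \ref{prop-frame} characterizes totally symmetric $\mathcal{F}$) into \eqref{eq-mono-Phi}; the right-hand side then collapses to $\Phi_\lambda(z, \bar z)$, so $g^*\Phi_\lambda = \Phi_\lambda$ for every $g \in \pi_1(M)$ and every $\lambda$, as required.

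The converse is the content of the result. Assume $g^*\Phi_\lambda(z, \bar z) = \Phi_\lambda(z, \bar z)$ for all $g$ and $\lambda$. Substituting into \eqref{eq-mono-Phi} yields
\[
\Phi_\lambda(z, \bar z) = \bigl(A(1, \lambda)^{-1} \chi(g, \lambda) A(1, \lambda)\bigr)\, \Phi_\lambda(z, \bar z)\, \chi(g, 1)^{-1}.
\]
I would now evaluate at $z = z_0$. Since $F(z_0, \bar z_0, \lambda) = e$ and $F(z_0, \bar z_0, 1) = e$, Proposition \ref{cor-Phi-F} combined with \eqref{eq-A} gives $\Phi_\lambda(z_0, \bar z_0) = A(1, \lambda)^{-1}$. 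After straightforward cancellation this reduces to the purely group-theoretic identity $\chi(g, \lambda) = \chi(g, 1)$; in other words, the DPW monodromy cocycle is independent of $\lambda$.

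The main obstacle is that \eqref{eq-mono-Phi} carries both a left conjugation and a right translation of $\Phi_\lambda$, so the triviality of the monodromy of $\Phi$ does not by itself force $\chi(g, \lambda) = e$. This is where fullness enters as the crucial additional ingredient. Setting $\lambda = 1$ in the monodromy relation $F(g.z, \overline{g.z}, \lambda) = \chi(g, \lambda) F(z, \bar z, \lambda) k(g, z, \bar z)$ and projecting to $G/\hat K$ produces $\tilde{\mathcal{F}}(g.z) = \chi(g, 1)\tilde{\mathcal{F}}(z)$; but $\tilde{\mathcal{F}} = \mathcal{F} \circ \tilde\pi$ is invariant under the deck action of $\pi_1(M)$, so $\chi(g, 1)$ fixes every element of $\mathcal{F}(M)$. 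Definition \ref{deffull} then yields $\chi(g, 1) = e$, and combining this with the previous step gives $\chi(g, \lambda) \equiv e$. By Proposition \ref{prop-frame}, $\mathcal{F}$ is totally symmetric.
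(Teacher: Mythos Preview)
Your argument is correct and is precisely the intended deduction from Proposition \ref{monodromyPhi}; the paper itself omits the proof, treating the corollary as an immediate consequence of \eqref{eq-mono-Phi}. One minor streamlining: the identity $\chi(g,1)=e$ that you establish via fullness holds \emph{a priori} for any full harmonic map defined on $M$ (it does not use the hypothesis on $\Phi$), so invoking it first reduces \eqref{eq-mono-Phi} to $g^*\Phi_\lambda = \bigl(A(1,\lambda)^{-1}\chi(g,\lambda)A(1,\lambda)\bigr)\Phi_\lambda$, from which both implications are immediate without the evaluation at $z_0$.
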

%%%%%%%%%%%%%%%%%%%%%%%
\subsection{\bf{Harmonic maps of finite uniton number}}

In view of the definition on top of page 546 of \cite{BuGu} we put
\begin{definition}
Let $\mathcal{F}: M \rightarrow G/\hat{K}$ be a harmonic map with associated modified harmonic map
$\mathbb{F}: M \rightarrow G$ and extended frame $F$ and extended solution  $\Phi: \tilde{M} \rightarrow \Omega G$
respectively. Then the harmonic map $\mathcal{F}: M \rightarrow G/\hat{K}$ is said to be of
\emph{finite uniton number} if and only if
\begin{enumerate}[(U1)]
    \item The extended solution $\Phi$ has trivial monodromy representation.
    \item  There exists some $B(\lambda) \in \Omega G$  such that  $ B \Phi$ is a Laurent polynomial in $\lambda.$
\end{enumerate}
Moreover, we say that $\mathcal{F}$ has {\it finite uniton number $k$} if
(see \eqref{eq-alg-loop} for the definition of $ \Omega^k_{alg} G $)
 \begin{equation}\Phi(M)\subset \Omega^k_{alg} G ,\
\hbox{ and } \Phi(M)\nsubseteq \Omega^{k-1}_{alg} G .\end{equation}
 In this case we write  $r(\Phi)=k$ and the minimal uniton number of $\mathbb{F}$ is defined
as \[r(\mathbb{F}):=min\{r(\gamma  Ad(\Phi))| \gamma\in \Omega_{alg} Ad G \}.\]
\end{definition}

\begin{remark}
\begin{enumerate}
    \item   We refer to \cite{BuGu,Gu2002,Uh} for examples of harmonic maps of finite uniton number.
\item   In \cite{DoWa-fu2} a result of  \cite{BuGu} is translated and modified so as  to produce in the DPW formalism a
family of meromorphic potentials which yield exactly all harmonic maps of finite uniton number, when carrying out
the DPW formalism with initial condition $e$ at a fixed base point $z_0,$ as explained in Theorem \ref{thm-DPW}.
\end{enumerate}

\end{remark}

 {
The notion of finite uniton number harmonic maps is related to extended solutions. In this paper we usually use extended framings of harmonic maps and read off the notion of ``finite uniton type" from these extended frames. It is important to this paper that these two notions describe the same class of harmonic maps.}

\begin{proposition} \label{typeequivnumber}
$\mathcal F$ is a harmonic map of finite uniton type in $G/K$ if and only if
 {$\mathbb{F} =   \mathfrak{C}_h \circ \mathcal{F}$  given in  section 4.1.1, where $\mathfrak{C}_h$ is the modified  Cartan }embedding of $G/K$ into $G$, is a harmonic  map of finite uniton number.
\end{proposition}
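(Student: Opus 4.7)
The plan is to reduce the claim to a direct combination of two equivalences already established in this section, namely Proposition \ref{algebraic-harmonic <--->ext. sol.} and the Corollary following Proposition \ref{monodromyPhi}. Recall that by Theorem \ref{Theorem1.1} together with Definition \ref{def-uni} and the definition of ``algebraic'' in Section 3, a harmonic map $\mathcal{F}: M \to G/K$ is of finite uniton type precisely when it is both \emph{algebraic} (condition (U2)) and \emph{totally symmetric} (condition (U1)). On the other hand, the definition of ``finite uniton number'' for the modified harmonic map $\mathbb{F} = \mathfrak{C}_h \circ \mathcal{F}$ packages exactly the two conditions: triviality of the monodromy representation of the extended solution $\Phi$, and existence of $B(\lambda) \in \Omega G$ such that $B\Phi$ is a Laurent polynomial in $\lambda$. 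So the proof amounts to matching these two pairs of conditions one-to-one.

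First I would handle (U2), the algebraicity side. By Proposition \ref{algebraic-harmonic <--->ext. sol.}, the harmonic map $\mathcal{F}$ is algebraic if and only if there exists $B(\lambda) \in \Omega G$ such that $B\Phi$ is a Laurent polynomial in $\lambda$. This is verbatim condition (U2) in the definition of finite uniton number for $\mathbb{F}$, so no further work is required here beyond citing the proposition.

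Next I would handle (U1), the monodromy side. By the Corollary following Proposition \ref{monodromyPhi}, the harmonic map $\mathcal{F}$ is totally symmetric if and only if $\Phi$ has trivial monodromy representation. Combined with the previous paragraph, this yields the desired equivalence: $\mathcal{F}$ is algebraic and totally symmetric if and only if $\Phi$ satisfies both (U1) and (U2), i.e., $\mathbb{F}$ is of finite uniton number. The one place where I expect some bookkeeping rather than difficulty is making sure the base-point normalizations used for $F$ and for $\Phi$ (in particular, $F(z_0,\bar z_0, \lambda) = e$ and the normalizations \eqref{cond for Phi}) are consistent with the Iwasawa-type factor $A(1,\lambda)$ appearing in Proposition \ref{cor-Phi-F} and \eqref{eq-mono-Phi}; since $A(1,\lambda)$ is an element of the loop group that is independent of $z$, it is absorbed into the factor $B$ in (U2) and into the conjugation in (U1), so it does not affect either condition. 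This is the only delicate point, and it is handled precisely by the two propositions cited.
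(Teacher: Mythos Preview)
Your proposal is correct and, in fact, more cleanly organized than the paper's own proof. Both arguments rest on the same underlying identity relating $F$ and $\Phi$ (equations \eqref{eq-lawson}--\eqref{eq-lawson2}), but you package the two halves of the equivalence by citing Proposition~\ref{algebraic-harmonic <--->ext. sol.} for (U2) and the Corollary after Proposition~\ref{monodromyPhi} for (U1), whereas the paper re-derives the (U2) part directly from the two-parameter formula \eqref{eq-lawson} and leaves the (U1) part implicit. Your explicit handling of the monodromy condition and the remark that $A(1,\lambda)$ is absorbed harmlessly into $B$ (respectively into conjugation) are improvements in exposition; nothing substantive is missing.
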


\begin{proof}
$``\Rightarrow"$ By definition, $F(z,\bar z, \lambda\tilde\lambda)$ is a Laurent polynomial in $\lambda \tilde{\lambda}$ and hence also in $\tilde\lambda$. As a consequence of this and
\eqref{eq-lawson} we have that \[A(\lambda,\tilde\lambda)\Phi(z,\bar z, \lambda,\tilde\lambda)=F(z,\bar{z}, \lambda\tilde\lambda)F(z,\bar{z}, \lambda)^{-1}\]
is also a Laurent polynomial in $\tilde\lambda$.  Hence, by  the above definition
 $\mathbb{F}(z,\bar z,\lambda)$ is of finite uniton number.

 $``\Leftarrow"$  By definition, there exists some $\Phi(z,\bar z, \lambda,\tilde\lambda)$ being a Laurent polynomial in $\tilde\lambda$ and hence by \eqref{eq-A} $A(\lambda,\tilde\lambda)=\Phi (z_* ,{\bar{z}}_*,\lambda,\tilde{\lambda})^{-1}$ is also a Laurent polynomial in $\tilde\lambda$. As a consequence, by
\eqref{eq-lawson} we have that $F(z,\bar{z}, \lambda\tilde\lambda)$ is also a Laurent polynomial in $\tilde\lambda$ and hence also in $\lambda$, i.e., $\mathcal F$ is of finite uniton type.
 Consequently, $\mathbb{F}(z,\bar z,\lambda)$ is of finite uniton number.
  \end{proof}

\section{\bf{A result of Grame Segal and a related conjecture}}

\subsection{A result of Grame Segal }
In \cite{Segal}, Graeme Segal proved a result which was rephrased in \cite[Theorem 1.2]{BuGu},
and which we slightly adjust to our notation just below.
Note that the general assumptions there were like ours. But $M$ and $G$ were assumed to be compact.:

\begin{theorem}
Let $M$ be  compact Riemann surface and $G$ a compact connected real semi simple Lie group with trivial center.
Let $\mathcal{F}: M \rightarrow G/K$ be a harmonic map, where $G/K$ is a
compact, inner, semi simple symmetric space with $\mathbb{F}=\mathfrak{C}_h\circ \mathcal{F} : M \rightarrow G$ as associated modified harmonic map. Let $F$ denote an extended frame  of $\mathcal{F}$ and $\Phi$ an extended solution for $\mathbb{F}$ as in the previous section.  If $\Phi$ descends to an extended solution $\Phi: M \rightarrow \Omega G$, then there exists some
$B \in \Omega G$ such that $B\Phi$ is a Laurent polynomial.
\end{theorem}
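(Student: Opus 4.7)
The plan is to use Segal's Grassmannian model of $\Omega G$, following the approach of \cite{Segal} and \cite[Section 1]{BuGu}. First I would reduce to the unitary case by fixing a faithful unitary representation $G \hookrightarrow U(n)$ for some $n$. This replaces $\mathbb F: M \rightarrow G$ by its image in $U(n)$, and replaces $\Phi: M \rightarrow \Omega G$ by its composition with $\Omega G \hookrightarrow \Omega U(n)$. It suffices to produce $B \in \Omega U(n)$ making $B\Phi$ algebraic; once established, a standard argument involving the centralizer structure lets one replace $B$ by an element of $\Omega G$ (this is where triviality of the center is used, cf.\ \cite[Prop.~1.5]{BuGu}).

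Next I would pass to the Grassmannian model. Setting $H = L^2(S^1,\C^n) = H_+ \oplus H_-$, each loop $\gamma \in \Omega U(n)$ defines a closed subspace $W_\gamma = \gamma \cdot H_+ \subset H$ satisfying $\lambda W_\gamma \subset W_\gamma$, and $\gamma \mapsto W_\gamma$ embeds $\Omega U(n)$ into the Segal Grassmannian $\mathrm{Gr}^{(n)}$. The Birkhoff stratification, i.e.\ item (2)(a) of Theorem \ref{thm-decomposition}, decomposes $\mathrm{Gr}^{(n)}$ into locally closed strata $\Sigma_\mu$ indexed by finite sequences $\mu$; the strata with $|\mu| \leq k$ are exactly those whose representatives are Laurent polynomials with $\deg \leq k$. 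Thus $\bigcup_k \Sigma_{\leq k}$ is precisely the image of $\Omega_{alg}\, U(n)$.

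The decisive step is to show that the map $W: M \rightarrow \mathrm{Gr}^{(n)}$, $z \mapsto W_{\Phi(z)}$, is holomorphic and has compact image contained in finitely many strata. Here I would exploit that the extended solution equations \eqref{eq-Uh1} translate into $\partial_{\bar z} W \subseteq W$ and $\lambda W \subseteq \partial_z W$ (this is Segal's reformulation): $W$ is a holomorphic subbundle with a first-order integrability condition. Since $\Phi$ descends to $M$ by hypothesis, $W(M)$ is compact in $\mathrm{Gr}^{(n)}$. The ``virtual dimension'' (the index $d(W) = \dim(W \cap H_-^\perp) - \mathrm{codim}(W + H_+)$) is locally constant and hence constant on $M$, and the analogous ``energy'' integer $k(W)$ is lower semicontinuous and bounded on compact sets. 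This forces $W(M) \subseteq \mathrm{Gr}^{(n)}_{\leq k_0}$ for some finite $k_0$.

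Finally, a single element $B \in \Omega U(n)$ suffices to shift $W(M)$ into the algebraic locus. The hard part of the argument is the compactness-plus-holomorphicity implication, where one needs that the filtration by strata is exhausting on compacta; this is precisely Segal's analytic input and I would simply cite it from \cite{Segal}, cf.\ \cite[Prop.~2.1]{BuGu}. Choosing $B$ to invert the leading asymptotic behaviour at $\lambda = \infty$ of the representatives of $W(M)$, the loop $B\Phi(z,\bar z, \lambda)$ then lies in $\Omega^{k_0}_{alg}\, U(n)$ for every $z \in M$, completing the proof.
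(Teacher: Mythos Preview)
The paper does not supply its own proof of this theorem. It is stated as a result of Segal \cite{Segal}, rephrased via \cite[Theorem~1.2]{BuGu}, and simply cited; the paper's contribution in Section~5 is only the reformulation (Theorem~5.2) in the language of totally symmetric versus algebraic harmonic maps, obtained by combining Segal's theorem with the duality results and the dictionary of Section~4. So there is nothing to compare against except the references themselves.

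Your outline is a reasonable sketch of the argument in \cite{Segal} and \cite[\S1--2]{BuGu}: embed $G$ in $U(n)$, pass to the Grassmannian model, use holomorphicity of $z\mapsto W_{\Phi(z)}$ together with compactness of $M$ to force the image into a finite piece of $\mathrm{Gr}^{(n)}$, and then descend back to $\Omega G$ via the centralizer argument of \cite[Prop.~1.5]{BuGu}. One imprecision worth flagging: in your final paragraph you describe $B$ as ``inverting the leading asymptotic behaviour'' to shift $W(M)$ into the algebraic locus, but that is not quite the mechanism. In Segal's argument, once compactness and holomorphicity pin $W(M)$ inside some $\lambda^{k}H_+\subset W(z)\subset\lambda^{-k}H_+$, the loops $\Phi(z)$ are already Laurent polynomials; the element $\gamma\in\Omega G$ in \cite[Theorem~1.2]{BuGu} arises instead from the energy gradient flow (Morse theory on $\Omega G$), which acts by left translation and is what actually produces the normalized algebraic representative. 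If you intend to expand this into a written proof rather than a citation, that distinction should be made precise.
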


In view of the duality theorem, our notation and the results of the last section, this can be rephrased as follows:

\begin{theorem}
Let $M$ be  compact Riemann surface and $G$ a (compact or non-compact)  connected real semi simple Lie group
with trivial center.
Let $\mathcal{F}: M \rightarrow G/K$ be a totally symmetric harmonic map, where $G/K$ is a
(compact or non-compact), inner, semi simple symmetric space.
Then $\mathcal{F}$ is an algebraic harmonic map.
\end{theorem}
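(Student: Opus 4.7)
The plan is to view the theorem as a translation of the original Segal theorem (stated immediately above) into our framework, using the dictionary set up in Sections 3 and 4. I would proceed in four steps.

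First, I would reduce to the fully compact case. If $G/K$ is of non-compact type, Theorem \ref{duality: totally symmetric} produces a dual harmonic map $\tilde{\mathcal{F}}_U$ into the compact dual $\tilde{U}/(\tilde{U}\cap \tilde{K}^{\C})$ which is again totally symmetric, while Theorem \ref{duality:algebraic} asserts that algebraicity is preserved by this duality. Hence it suffices to establish the conclusion under the additional assumption that both $G$ and $G/K$ are compact.

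Second, I would move from extended frames to extended solutions. Compose with the modified Cartan embedding $\mathfrak{C}_h$ of Section 4.1 to obtain $\mathbb{F} = \mathfrak{C}_h \circ \mathcal{F} : M \to G$, and let $\Phi$ be the extended solution associated with the extended frame $F$ of $\mathcal{F}$ via Proposition \ref{cor-Phi-F}. By the Corollary following Proposition \ref{monodromyPhi}, the total symmetry of $\mathcal{F}$ is equivalent to $\Phi$ having trivial monodromy representation. Consequently $\Phi$ descends to a well-defined map $\Phi : M \to \Omega G$ on the compact Riemann surface $M$.

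Third, the original Segal theorem (the first theorem of Section 5) applies verbatim: since $M$ and $G$ are compact and $\Phi$ descends to $M$, there exists some $B \in \Omega G$ such that $B\Phi$ is a Laurent polynomial in the loop parameter. Finally, Proposition \ref{algebraic-harmonic <--->ext. sol.} converts this back into the statement that $\mathcal{F}$ is algebraic. Combining with Step 1 then yields the conclusion for arbitrary (compact or non-compact) inner semi-simple $G/K$.

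The argument is essentially a bookkeeping chain; no new analytic input is needed beyond Segal's theorem itself. The only points requiring a bit of care are the base-point normalizations and the compatibility of the Iwasawa splitting $F = F_{\tilde U} S_+$ with the passage between extended frames and extended solutions, but these compatibilities are exactly what Propositions \ref{cor-Phi-F}--\ref{monodromyPhi} and the duality theorems of Section 3 have already established. The main conceptual obstacle, namely the production of the straightening loop $B$, has been entirely offloaded onto Segal's theorem.
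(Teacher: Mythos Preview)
Your proposal is correct and follows precisely the route the paper itself indicates: the paper presents this theorem as a direct rephrasing of Segal's result ``in view of the duality theorem, our notation and the results of the last section,'' without spelling out further details. You have simply made that outline explicit by invoking Theorems \ref{duality:algebraic} and \ref{duality: totally symmetric} for the reduction to the compact case, and Propositions \ref{algebraic-harmonic <--->ext. sol.}--\ref{monodromyPhi} together with the corollary after Proposition \ref{monodromyPhi} for the passage between extended frames and extended solutions.
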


%%%%%%%%%%%%%%%%%%%%%%%%

\subsection{\bf{A related conjecture}}
The conjecture in question basically is the converse of Segal's result.
However, in addition, the Riemann surface is no longer assumed to be compact.
\vspace{5mm}

{{\bf{Conjecture}}
\emph{Let $M$ be a Riemann surface, compact or non-compact and $G$ a compact or non-compact connected real semi simple Lie group with trivial center.  Let $\mathcal{F}: M \rightarrow G/K$   be a an algebraic harmonic map, where $G/K$ is a  (compact or non-compact), real, inner, semi simple symmetric space.}
\emph{Then $\mathcal{F}$ is totally symmetric.}
\vspace{5mm}

We will finish this paper with a few remarks.
\begin{remark}
\begin{enumerate}
    \item
In view of the duality results Theorem \ref{duality:algebraic} and Theorem \ref{duality: totally symmetric}, we can assume w.l.g. that $G/\hat{K}$ is compact and thus can apply the results of \cite{BuGu}.
\item
Since $\mathcal{F}$ is defined on $M$, we infer that the lifted harmonic map
$\tilde{\mathcal{F}}: \tilde{M} \rightarrow G/\hat{K}$ is invariant under $\pi_1(M)$.
For the associated family $\tilde{\mathcal{F}}_\lambda$   of harmonic maps
this is equivalent to that its monodromy representation $\chi(g,\lambda) : \pi_1(M) \rightarrow \Lambda G_\sigma $
is trivial for $\lambda = 1$. In general,  harmonic maps defined on $M$ generally have non-trivial monodromy representation for almost all $\lambda \in S^1$ and $\C^*$ respectively.
\end{enumerate}
\end{remark}

%MMMMMMMMMMMMMMMMMMMMM
\vspace{3mm}

{  \bf Acknowledgements}\ \
 {PW} was  supported by the Project  12371052 of NSFC.

{\footnotesize
\def\refname{References}

}
{\small\

 Josef F. Dorfmeister

Fakult\" at f\" ur Mathematik, TU-M\" unchen,

Boltzmannstr.3, D-85747, Garching, Germany

{\em E-mail address}: dorfmeis@gmail.de\\

Peng Wang

School of Mathematics and Statistics, FJKLMAA,

Key Laboratory of Analytical Mathematics and Applications (Ministry of Education),

Fujian Normal University, Qishan Campus,

Fuzhou 350117, P. R. China

{\em E-mail address}: {pengwang@fjnu.edu.cn}
%%%%%%%%%%%%%%%%%%%%%

\end{document}